\documentclass[12pt]{amsart}

\usepackage[utf8x]{inputenc}
\usepackage[english]{babel}
\usepackage{subfigure}
\usepackage{amsmath, amsfonts, amssymb}
\usepackage{cancel}
\usepackage{color}
\usepackage{tikz}
\usetikzlibrary{positioning,shapes,shadows,arrows}
\usepackage{enumitem}
\usepackage{rotating}
\usepackage[colorlinks=true,linkcolor=blue,citecolor=blue,urlcolor=blue]{hyperref}

\usepackage{url}
\usepackage{ccaption}
\usepackage{booktabs} 

\title[Pieri formula for Grothendieck polynomials]{Interval structure of the Pieri formula for Grothendieck polynomials}

\author{Viviane Pons}
\address{Laboratoire d'Informatique Gaspard Monge, Université Paris-Est
    Marne-la-Vallée, 5 Boulevard Descartes, Champs-sur-Marne, Marne-la-Vall\'ee cedex 2, France}
\keywords{Double Grothendieck polynomials, Key polynomials, 0-Hecke algebra, sorting operators, Bruhat order}

\subjclass{14M15, 05E15}

\newtheorem{Theoreme}{Theorem}[section]
\newtheorem{Corollaire}[Theoreme]{Corollary}
\newtheorem{Proposition}[Theoreme]{Proposition}
\newtheorem{Lemme}[Theoreme]{Lemma}
\newtheorem{Definition}[Theoreme]{Definition}

\newtheorem{Remarque}[Theoreme]{Remark}

\newcommand{\ltri}{\vartriangleleft}

\newcommand{\vleq}{\begin{turn}{-90}$\leq$\end{turn}}
\newcommand{\vgeq}{\begin{turn}{-90}$\geq$\end{turn}}
\newcommand{\veq}{\begin{turn}{-90}$=$\end{turn}}

\DeclareMathOperator{\Succs}{Succs}

\definecolor{Noir}{RGB}{0,0,0}
\definecolor{Rouge}{RGB}{205,35,38}
\definecolor{Bleu}{RGB}{2,60,195}
\definecolor{Vert}{RGB}{23,103,1}
\definecolor{Orange}{RGB}{255,113,15}
\definecolor{Blanc}{RGB}{255,255,255}


\tikzstyle{Coupled} = [opacity = .5, color=green, line width = 2]

\begin{document}

\maketitle

\begin{abstract}
We give a combinatorial interpretation of a Pieri formula for double Grothendieck polynomials in terms of an interval of the Bruhat order.
Another description had been given by Lenart and Postnikov in terms of chain enumerations. We use Lascoux's interpretation 
of a product of Grothendieck polynomials as a product of two kinds of generators of the 0-Hecke algebra, or sorting operators. 
In this way, we obtain a direct proof of the result of Lenart and Postnikov and then prove that the set of permutations 
occuring in the result is actually an interval of the Bruhat order. 
\end{abstract}

\section{Introduction}
\label{sec:IntroPolynomials}

Schubert calculus is an old topic in algebraic geometry. 
Originally, it involves counting lines satisfying some intersection conditions,
which amounts to finding the cardinalities of some zero-dimensional intersections
of  Schubert subvarieties in a Grassmanian. 
This is done by working in the cohomology or in the Chow ring, a quotient
of the ring of symmetric functions in which Schubert varieties are represented by Schur functions (see, e.g., \cite{Fulton}).

Modern intersection theory deals with more refined intersection conditions between chains of subspaces (flags), and
even between flag bundles over algebraic varieties \cite{FultonLascoux}. This involves computing in the cohomology,
equivariant cohomology,
$K$-theory (or Grothendieck ring) and equivariant $K$-theory of the flag manifold, whoses bases are respectively
the Schubert polynomials, double Schubert polynomials, Grothendieck polynomials and double Grothendieck polynomials,
all of which have  been introduced by Lascoux and Sch\"utzenberger \cite{LascouxKBruhat,LascouxGroth}.

Despite their geometric origin, these polynomials admit elementary definitions and are of interest for combinatorics.
In the case of the flag variety relative to $GL(n, \mathbb{C} )$, the cohomology ring and the Grothendieck ring can 
be interpreted as quotients of the ring of polynomials in $x_1, \dots, x_n$, with \emph{divided differences} $\partial_i$ 
operating in the first case and \emph{isobaric divided differences} $\pi_i$ in the second case. 
The natural bases, Schubert polynomials and Grothendieck polynomials, corresponding to the Schubert varieties,
can be defined as follows. Let us recall that the symmetric group $\mathfrak{S}_n$ is generated by the elementary transpositions $s_i = (i,i+1)$ with $1 \leq i <n$. It acts on multivariate polynomials by switching the variables ($s_i$ switches $x_i$ and $x_{i+1}$). The divided differences and isobaric divided differences are two deformations of this action: 
	\begin{align}
		\label{eq:partiali}
		\partial_i &:= (1 - s_i) \frac{1}{x_i - x_{i+1}}, \\
   		\label{eq:PiOnPolynomials}
		\pi_i &:= x_i \partial_i.
   \end{align}
where operators act on their left. It is convenient to introduce a second set of isobaric divided differences, 
\begin{equation}
   		\label{eq:HatPiOnPolynomials}
		\hat{\pi}_i := \partial_i x_{i+1} = \pi_i - 1.
\end{equation}
All families $\partial_i$, $\pi_i$ and $\hat{\pi}_i$ satisfy the braid relations,
together with quadratic relations which are

\begin{minipage}{0.30\linewidth}
\hfill
   \begin{equation}
   		\label{eq:HatPiQuad}
		\partial_i \partial_i = 0,
   \end{equation}
   \hfill
\end{minipage}
\hfill
\begin{minipage}{0.30\linewidth}
\hfill
   \begin{equation}
   		\label{eq:PiQuad}
		\pi_i  \pi_i = \pi_i,
   \end{equation}
   \hfill
\end{minipage}
\hfill
\begin{minipage}{0.30\linewidth}
\hfill
   \begin{equation}
   		\label{eq:HatPiQuad}
		\hat{\pi}_i \hat{\pi}_i = - \hat{\pi}_i.
   \end{equation}
   \hfill
\end{minipage}

The $\pi_i$ and $\hat{\pi}_i$, $i=1, \dots, n-1$, both generate the 0-Hecke algebra of $ \mathfrak{S}_n$. Although relations between them are simple, there is no general combinatorial description of products involving both types of operators. 

They are used to generate two families of Grothendieck polynomials, 
$(G_{(\sigma)})_{\sigma \in \mathfrak{S}_n}$ and $(\hat{G}_{(\sigma)})_{\sigma \in \mathfrak{S}_n}$, or of 
Key polynomials $(K_v)_{v \in \mathbb{N}^n}$ and $(\hat{K}_v)_{v \in \mathbb{N}^n}$. 
By definition, the operators $\pi_i$ (resp. $\hat{\pi}_i$) act on $G_{(\sigma)}$ or $K_v$ 
(resp. $\hat{G}_{(\sigma)}$ and $\hat{K}_v$) as sorting operators on the indices. 
More precisely, let $y_1, \dots, y_n$ be a second set of variables, 
we define \emph{dominant Grothendieck polynomials} and \emph{dominant Key polynomials} by
\begin{align}
	\label{eq:DominantGrothendieck}
	G_{(\omega)} &:= \prod_{\substack{i = 1 \ldots n\\ j = 1 \ldots n-i}} (1-y_jx_i^{-1}), \\
	\label{eq:DominantKeyPolynomials}
	K_\lambda = \hat{K}_\lambda &:= x_1^{\lambda_1}x_2^{\lambda_2}\dots x_n^{\lambda_n},
\end{align}
where $\omega = [n, n-1, \dots, 1]$ is the maximal permutation of size $n$, 
and $\lambda = (\lambda_1, \dots, \lambda_n)$ is a dominant vector,
 \textit{i.e.}, $\lambda_1\!\geq\!\lambda_2\!\geq\!\dots\!\geq\!\lambda_n$.
 One can also define $\hat{G}_{(\omega})$ but we do not need it. 
The Grothendieck polynomials are all the different images of the dominant polynomial by iterations of the operators $\pi_i$:
\begin{equation}
\label{eq:GrothendieckPerm}
G_{(\sigma s_i)} = G_{(\sigma)}\pi_i
\end{equation}
if $\sigma(i) > \sigma(i+1)$. The same definition holds for Key polynomials, $s_i$ acting on a vector $v$ rather than on a permutation. One uses the $\hat{\pi}_i$ to define the $(\hat{K}_\sigma)$ or $(\hat{G}_{(\sigma)})$ bases. 

Grothendieck polynomials form a linear basis of
the space of  multivariate polynomials,
regarded as a free module over the ring of symmetric polynomials. 
Thus, an interesting problem is to understand the product in terms of this basis. 
For symmetric functions, the product is completely determined by the Pieri formula, which describes the product of a Schur function by a complete function \cite{Macdonald}.
On the Grothendieck basis, an analogue operation would be to expand the product $G_{(\sigma)} G_{(s_k)}$ where $s_k$ is a simple transposition. 
This is indeed sufficient to completely characterize the multiplicative structure of the Grothendieck ring. Let us recall that 
\begin{equation}
\label{eq:G_sk}
G_{(s_k)} = 1 - \frac{y_1 \dots y_k}{x_1 \dots x_k}.
\end{equation}
In \cite[Theorem 6.4]{LascouxGroth}, Lascoux gives the following interpretation of this product in terms of $\pi$ and $\hat{\pi}$. 
\begin{Theoreme}{(Lascoux)}
\label{thm:PieriGroth}
Let $\sigma \in \mathfrak{S}_n$ and $1 \leq k < n$. Let  $\zeta = \zeta(\sigma,k)$ be the element of the coset $\sigma (\mathfrak{S}_k \times \mathfrak{S}_{n-k})$ with maximal length, \emph{i.e.}, $\zeta(1)\!>\!\zeta(2)\!>\dots>\!\zeta(k)$ and $\zeta(k+1)\!>\dots>\!\zeta(n)$. Then, modulo the ideal $ \mathfrak{Sym}(x) = \mathfrak{Sym}(y)$, that is identifying a symmetric function of $x$ with the same function of $y$, one has
\begin{equation}     
\label{eq:PieriGrothFest}
G_{(\sigma)} \frac{y_{\sigma_1}\cdots y_{\sigma_k} }{x_1\cdots x_k} 
\equiv G_{(\omega)}  \hat{\pi}_{\omega \zeta} 
       \pi_{\zeta^{-1}\sigma},
\end{equation}
where if $s_{i_1} \dots s_{i_m}$ is a reduced decomposition of a permutation $\mu$, then $\pi_\mu = \pi_{i_1} \dots \pi_{i_m}$ and $\hat{\pi}_\mu = \hat{\pi}_{i_1} \dots \hat{\pi}_{i_m}$.
\end{Theoreme}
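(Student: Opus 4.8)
The plan is to split the statement into a purely formal ``sorting'' part, handled by the operators $\pi_j$, and a ``dominant'' core, which is the special case $\sigma=\zeta$. So the first step is to reduce to $\sigma=\zeta$, and the second is to prove that case.

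\emph{Reduction to $\sigma=\zeta$.} Write $W_J=\mathfrak{S}_k\times\mathfrak{S}_{n-k}$, generated by the $s_j$ with $j\neq k$, and put $w=\zeta^{-1}\sigma\in W_J$. Since $\zeta$ is the longest element of the coset $\sigma W_J$, one has the length identity $\ell(\zeta)=\ell(\sigma)+\ell(w)$; moreover $\zeta$ is strictly decreasing on $\{1,\dots,k\}$ and on $\{k+1,\dots,n\}$, so every $s_j$ with $j\neq k$ is a right descent of $\zeta$, and a reduced word $s_{j_1}\cdots s_{j_r}$ of $w$ therefore strips $\zeta$ down to $\sigma$ one descent at a time. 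Iterating \eqref{eq:GrothendieckPerm} gives $G_{(\sigma)}=G_{(\zeta)}\,\pi_{j_1}\cdots\pi_{j_r}=G_{(\zeta)}\,\pi_{\zeta^{-1}\sigma}$. Next, the value set $\{\sigma_1,\dots,\sigma_k\}$ depends only on the coset, so $y_{\sigma_1}\cdots y_{\sigma_k}=y_{\zeta_1}\cdots y_{\zeta_k}$, and the Laurent monomial $(x_1\cdots x_k)^{-1}$ is invariant under every $s_j$ with $j\neq k$. Using the rule $\pi_j(fg)=\pi_j(f)\,g$ valid whenever $g$ is $s_j$-invariant, the factor $\tfrac{y_{\zeta_1}\cdots y_{\zeta_k}}{x_1\cdots x_k}$ can be carried to the left of $\pi_{j_1}\cdots\pi_{j_r}$, whence
\begin{equation*}
G_{(\sigma)}\,\frac{y_{\sigma_1}\cdots y_{\sigma_k}}{x_1\cdots x_k}
=\Bigl(G_{(\zeta)}\,\frac{y_{\zeta_1}\cdots y_{\zeta_k}}{x_1\cdots x_k}\Bigr)\pi_{\zeta^{-1}\sigma}.
\end{equation*}
Finally, each $\pi_j$ is $\mathbb{Z}[y^{\pm1}]$-linear and fixes every polynomial symmetric in $x_j,x_{j+1}$, hence it preserves the ideal $\mathfrak{Sym}(x)=\mathfrak{Sym}(y)$; so applying $\pi_{\zeta^{-1}\sigma}$ to a congruence keeps it a congruence. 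Comparing with the right-hand side $\bigl(G_{(\omega)}\hat\pi_{\omega\zeta}\bigr)\pi_{\zeta^{-1}\sigma}$ of \eqref{eq:PieriGrothFest}, it suffices to prove the case $\sigma=\zeta$, i.e.
\begin{equation}\label{eq:reduced}
G_{(\zeta)}\,\frac{y_{\zeta_1}\cdots y_{\zeta_k}}{x_1\cdots x_k}\;\equiv\;G_{(\omega)}\,\hat\pi_{\omega\zeta}.
\end{equation}

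\emph{The case $\sigma=\zeta$.} Here I would induct on $\ell(\omega\zeta)=\ell(\omega)-\ell(\zeta)$. When $\zeta=\omega$ the set $\{\zeta_1,\dots,\zeta_k\}$ is $\{n-k+1,\dots,n\}$ and \eqref{eq:reduced} reads $G_{(\omega)}\cdot\tfrac{y_ny_{n-1}\cdots y_{n-k+1}}{x_1\cdots x_k}\equiv G_{(\omega)}$, which I would check directly from the product formula \eqref{eq:DominantGrothendieck} for $G_{(\omega)}$ together with the congruences identifying the elementary symmetric functions of $y$ with those of $x$. If $\zeta\neq\omega$, its only ascent is at position $k$, so $\zeta(k)<\zeta(k+1)$; setting $\zeta'=\zeta s_k$ we get $\ell(\zeta')=\ell(\zeta)+1$, hence $G_{(\zeta)}=G_{(\zeta')}\pi_k$ by \eqref{eq:GrothendieckPerm}, while $\omega\zeta=(\omega\zeta')s_k$ with $\ell(\omega\zeta)=\ell(\omega\zeta')+1$ gives $\hat\pi_{\omega\zeta}=\hat\pi_{\omega\zeta'}\hat\pi_k$. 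Using $\hat\pi_k=\pi_k-1$ and the twisted Leibniz rule for $\partial_k$ to move the single ``offending'' factor $x_k^{-1}$ past $\pi_k$ (the rest of $(x_1\cdots x_k)^{-1}$ being $s_k$-symmetric), the inductive step boils down to a local identity on the pair $x_k,x_{k+1}$ that turns $\bigl(G_{(\zeta')}\tfrac{y_{\zeta'(1)}\cdots y_{\zeta'(k)}}{x_1\cdots x_k}\bigr)\hat\pi_k$ into $\bigl(G_{(\zeta')}\pi_k\bigr)\tfrac{y_{\zeta(1)}\cdots y_{\zeta(k)}}{x_1\cdots x_k}$ modulo the ideal; note that the two monomial numerators differ exactly by the ratio $y_{\zeta'(k)}/y_{\zeta'(k+1)}$, which is the discrepancy $\hat\pi_k$ must absorb. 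Since $\zeta'=\zeta s_k$ need not itself be a maximal coset representative, it is cleanest to run this induction over all of $\mathfrak{S}_n$ ordered by $\ell(\omega)-\ell(\sigma)$: ascents at $j\neq k$ are treated by the very sliding argument of the reduction above, and only an ascent at $j=k$ calls for the local computation.

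\emph{Main obstacle.} Everything except the $j=k$ inductive step is bookkeeping: coset combinatorics, length additivity, right descents of $\zeta$, and preservation of the ideal by the $\pi_j$. The real work is the local identity at position $k$, where, unlike at $j\neq k$, the monomial $(x_1\cdots x_k)^{-1}$ is not $s_k$-symmetric and genuinely interacts with $\pi_k$ and $\hat\pi_k$. One must check that the correction terms produced (of the shape $x_k\,G_{(\zeta')}\,\partial_k(\cdot)$) vanish modulo $\mathfrak{Sym}(x)=\mathfrak{Sym}(y)$, or equivalently exploit enough of the explicit structure of $G_{(\zeta')}$ near its $k$-th column, so that the output is precisely $G_{(\zeta)}\tfrac{y_{\zeta_1}\cdots y_{\zeta_k}}{x_1\cdots x_k}$ with the correct monomial shift. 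As an alternative to carrying this out by hand, one may instead obtain \eqref{eq:PieriGrothFest} from Lascoux's six-vertex/Yang--Baxter evaluation of the Cauchy kernel for Grothendieck polynomials, from which the product $G_{(\sigma)}G_{(s_k)}$ is read off directly.
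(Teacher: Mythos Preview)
The paper does not prove Theorem~\ref{thm:PieriGroth} at all: it is quoted verbatim from Lascoux \cite[Theorem~6.4]{LascouxGroth} and used as an input to the rest of the argument. So there is no ``paper's own proof'' against which to compare your attempt.

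As for the attempt itself, your reduction to $\sigma=\zeta$ is clean and correct; the coset combinatorics, the $s_j$-invariance of $(x_1\cdots x_k)^{-1}$ for $j\neq k$, and the compatibility of the $\pi_j$ with the ideal all work exactly as you say. But the proof is not complete, and you are candid about this: neither the base case $\zeta=\omega$ nor the inductive step at $j=k$ is actually carried out. The base case is a genuine symmetric-function identity (that $G_{(\omega)}\cdot y_n\cdots y_{n-k+1}/(x_1\cdots x_k)\equiv G_{(\omega)}$ modulo $\mathfrak{Sym}(x)=\mathfrak{Sym}(y)$), not a triviality, and the $j=k$ step is precisely where the monomial fails to slide past the operator and where the congruence, not just the operator calculus, has to do work. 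Your own ``Main obstacle'' paragraph names this accurately. As written, then, this is a plausible outline rather than a proof; to finish it you would have to either carry out the local computation at $k$ in full (controlling the $\partial_k$ correction terms modulo the ideal) or, as you suggest, import Lascoux's Cauchy/Yang--Baxter machinery---which is essentially what the paper does by citing \cite{LascouxGroth}.
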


This shows that the Pieri formula for Grothendieck polynomials can be calculated from the  formal expansion of a product of sorting operators. 
In \cite[Corollary 8.2]{LenartPostnikov}, Lenart and Postnikov give a combinatorial interpretation of this expansion for all types 
in terms of chains of the Bruhat order. In type $A$, it has the following description.
\begin{Theoreme}{(Lenart and Postnikov)}
\label{thm:LenartPostnikov}
Given a simple transposition $s_k$, we form the list of transpositions $(r_1, \dots r_\ell) = ( (1,n), (2,n), \dots, (k,n), \allowbreak (1,n-1), \dots, (k,n-1), \dots, (1,k+1), \dots, (k,k+1))$. Then
\begin{equation}
\label{eq:LenartPostnikov}
G_{(\sigma)}G_{(s_k)} \equiv G_{(\sigma)} - \frac{y_1 \dots y_k}{y_{\sigma_1} \dots y_{\sigma_k}} \sum_J (-1)^{\vert J \vert}  G_{(w(J))}
\end{equation}
where the sum is over subsets $J=(j_1 < j_2 \dots < j_s)$ of $(1, \dots, \ell)$ such that $\sigma \lessdot \sigma r_{j_1} \lessdot \sigma r_{j_1}r_{j_2} \lessdot \dots \lessdot \sigma r_{j_1} \dots r_{j_s} = w(J)$ is a saturated chain in the Bruhat order from $\sigma$ to $w(J)$. In other words, $J$ is a path in the Bruhat Hasse diagram between $\sigma$ and $w(J)$. 
The sum is cancellation free and the coefficients are either $1$ or $-1$ (each permutation is obtained in at most one chain).
\end{Theoreme}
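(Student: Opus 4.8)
The plan is to derive the Lenart--Postnikov formula \eqref{eq:LenartPostnikov} directly from Lascoux's identity \eqref{eq:PieriGrothFest} by unwinding the right-hand side $G_{(\omega)}\,\hat\pi_{\omega\zeta}\,\pi_{\zeta^{-1}\sigma}$ into an explicit sum over permutations, keeping careful track of signs. First I would fix a convenient reduced word. Writing $\zeta=\zeta(\sigma,k)$ for the maximal-length coset representative, one has $\ell(\zeta)=\ell(\sigma)+\binom{k}{2}$ and $\zeta^{-1}\sigma\in\mathfrak S_k\times\mathfrak S_{n-k}$; I would choose the reduced word for $\omega\zeta$ whose associated sequence of inversions, read off via the standard correspondence between reduced words and transpositions, is exactly the list $(r_1,\dots,r_\ell)=((1,n),(2,n),\dots,(k,n),(1,n-1),\dots,(k,k+1))$ appearing in Theorem~\ref{thm:LenartPostnikov}. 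The point is that $\omega\zeta$ as a set of inversions is precisely $\{(a,b): a\le k< b\}$ minus the inversions coming from $\sigma$, and a column-by-column reading of this "staircase complement" produces the $r_i$'s in the stated order; I would verify this is genuinely reduced and records each $r_i$ once.

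Next I would expand $G_{(\omega)}\hat\pi_{\omega\zeta}$. Using the quadratic relation $\hat\pi_i\hat\pi_i=-\hat\pi_i$ and the braid relations, together with the defining action $G_{(\mu)}\hat\pi_i = G_{(\mu s_i)}$ when $\mu(i)>\mu(i+1)$ and $G_{(\mu)}\hat\pi_i = -G_{(\mu)}$ otherwise — the analogue for the $\hat G$ basis, but applied here to $G_{(\omega)}$ read as $\hat G_{(\omega)}$ after using the identification modulo $\mathfrak{Sym}(x)=\mathfrak{Sym}(y)$ — one gets that $G_{(\omega)}\hat\pi_{r_{i_1}}\cdots\hat\pi_{r_{i_\ell}}$ expands as an alternating sum $\sum_J (-1)^{?}G_{(w(J))}$ where $J$ ranges over those subwords that produce a genuine descent at each applied step, i.e.\ exactly the saturated Bruhat chains $\omega\gtrdot\omega r_{i_1}\gtrdot\cdots$; going down from $\omega$ rather than up from $\sigma$, one then composes with $\pi_{\zeta^{-1}\sigma}$ and uses $\pi_i\pi_i=\pi_i$ to collapse the $\mathfrak S_k\times\mathfrak S_{n-k}$-part back down, which is what turns "chains from $\omega$" into "chains from $\sigma$" and produces the stated list $(r_1,\dots,r_\ell)$ of transpositions multiplying $\sigma$ on the right. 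I would track the sign through this double expansion and match it with $(-1)^{|J|}$. The prefactor $y_1\cdots y_k/(y_{\sigma_1}\cdots y_{\sigma_k})$ on the right of \eqref{eq:LenartPostnikov} comes from comparing the monomial $y_{\sigma_1}\cdots y_{\sigma_k}/(x_1\cdots x_k)$ in \eqref{eq:PieriGrothFest} with $G_{(s_k)}=1-y_1\cdots y_k/(x_1\cdots x_k)$ via \eqref{eq:G_sk}, after reducing $y_1\cdots y_k/(x_1\cdots x_k)\equiv 1$-type relations modulo the symmetric ideal; the "$G_{(\sigma)}-$" term absorbs the $1$ in $G_{(s_k)}$.

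Finally, the cancellation-free statement: I must show each permutation $w$ occurs for at most one subset $J$. Here the key is that the transpositions $r_{j_1},\dots,r_{j_s}$ selected along a chain are forced by $w$ and $\sigma$. I would argue that if $\sigma\lessdot\sigma r_{j_1}\lessdot\cdots\lessdot\sigma r_{j_1}\cdots r_{j_s}=w$ with the $j_i$ increasing in the fixed list order, then the multiset $\{r_{j_1},\dots,r_{j_s}\}$ — hence, by the prescribed ordering of the list, the sequence itself — is determined: at each cover $u\lessdot u r$ the transposition $r=(a,b)$ must move an entry up past entries strictly between them in value but with no values of $u$ strictly between positions $a,b$ lying between $u(a)$ and $u(b)$; combined with the fact that our list processes columns $b=n,n-1,\dots,k+1$ in order and within a column processes rows $a=1,\dots,k$ in order, I would show the "lex-first" chain to $w$ is the only one whose index set is increasing. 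This amounts to a reordering/exchange argument on reduced words compatible with the chosen linear order on transpositions, and this is where I expect the real work to lie; the rest is bookkeeping with the Hecke relations. The interval statement of the paper's own main theorem is then a separate matter, not needed here.
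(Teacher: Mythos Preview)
Your plan rests on a premise that is false, and this breaks the argument at the outset. You want a reduced word for $\omega\zeta$ whose associated inversion sequence is exactly the Lenart--Postnikov list $(r_1,\dots,r_\ell)$. But that list has $k(n-k)$ entries (every pair $(a,b)$ with $a\le k<b$), whereas $\ell(\omega\zeta)=\ell(\omega)-\ell(\zeta)$ depends on $\sigma$ and is in general strictly smaller. For instance with $n=3$, $k=1$, $\sigma=[2,1,3]$ one gets $\zeta=[2,3,1]$ and $\omega\zeta=[2,1,3]$, of length $1$, while $k(n-k)=2$. So no such reduced word exists. (Relatedly, your formula $\ell(\zeta)=\ell(\sigma)+\binom{k}{2}$ is wrong in the same example.) The notation $\hat\pi_{r_i}$ for a non-simple transposition $r_i$ is also undefined in this setup: the $\hat\pi$'s are indexed by simple reflections, and $\hat\pi_\mu$ is built from a reduced word of $\mu$, not from a single reflection.

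More importantly, even granting some repaired version of the first step, the real difficulty is the one you wave through: how the right action of $\pi_{\zeta^{-1}\sigma}$ turns an interval-type sum into the specific chain enumeration over $\sigma$. The paper does \emph{not} try to read off the chains from a single reduced word of $\omega\zeta$. Instead it first computes $K_\omega\hat\pi_{\omega\zeta}=\hat K_\zeta=\sum_{\mu\ge\zeta}(-1)^{\ell(\mu)-\ell(\zeta)}K_\mu$ and identifies this interval sum with the chain set $\mathfrak{E}_{\zeta,k}$ (Proposition~\ref{prop:EZeta}); then it proves inductively, one $\pi_i$ at a time, that $\mathfrak{E}_{\sigma,k}\pi_i=\mathfrak{E}_{\sigma s_i,k}$ (Proposition~\ref{prop:InductiveStep}), via an explicit pairing on chains (Lemmas~\ref{lemme:ESigmaPiNonZeroImage} and~\ref{lemme:ESigmaPiZeroImage}). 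The match with the Lenart--Postnikov ordering then comes from two separate facts: only $k$-Bruhat transpositions of $\sigma$ can occur in any such chain (Lemma~\ref{lemme:equiv-lenart}), and the enumeration is insensitive to which linear extension of the partial order $\ltri$ on transpositions one uses (Proposition~\ref{prop:wsigma-generalisation}). Your ``collapse'' of chains from $\omega$ to chains from $\sigma$ under $\pi_{\zeta^{-1}\sigma}$ is exactly this inductive step, and it is not bookkeeping: it is where essentially all the combinatorics lives.
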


This theorem actually describes the product as an enumeration of chains in the Bruhat order. Indeed, the sign of each element $G_{w(J)}$ is given by the length of $J$, or equivalently by $\ell(w(J)) - \ell(\sigma)$ where $\ell(\mu)$ is the length of the permutation $\mu$: the size of a reduced decomposition of $\mu$ is terms of simple transpositions. In this article, by a better understanding of the mixed product of $\pi$ and $\hat{\pi}$ given in Theorem \ref{thm:PieriGroth}, we prove that this chain enumeration is actually an interval of the Bruhat order. 

\begin{Theoreme}
Let us denote by $\eta(\sigma, k)$ (or simply $\eta$ if there is no ambiguity) the subset $w(J_{max})$ of maximal length 
obtained in the enumeration described in Theorem \ref{thm:LenartPostnikov}. Then
\label{thm:interval}
\begin{equation}
G_{(\sigma)} \frac{y_{\sigma_1}\cdots y_{\sigma_k} }{x_1\cdots x_k} 
\equiv \sum_{\sigma \leq \mu \leq \eta} (-1)^{\ell(\mu) - \ell(\sigma)} G_\mu.
\end{equation}
\end{Theoreme}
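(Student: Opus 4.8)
The plan is to show the chain enumeration of Theorem~\ref{thm:LenartPostnikov} is closed downward inside the interval $[\sigma,\eta]$ and hits every permutation of that interval exactly once. Since Theorem~\ref{thm:LenartPostnikov} already guarantees the sum is cancellation-free with all coefficients $\pm1$ and the sign of $G_{w(J)}$ is $(-1)^{\ell(w(J))-\ell(\sigma)}$, it suffices to prove the purely order-theoretic statement: the set $S = \{\,w(J) : J \text{ a saturated chain from } \sigma \text{ using only steps } r_{j_1}<\dots<r_{j_s}\,\}$ equals $\{\mu : \sigma \leq \mu \leq \eta\}$. One inclusion, $S \subseteq [\sigma,\eta]$, is the easy half: every $w(J)$ is $\geq \sigma$ because the chain is increasing in Bruhat order, and every $w(J) \leq \eta$ should follow from $\eta = w(J_{\max})$ being the unique top element together with a subword/embedding argument — I would want to first establish that $J_{\max}$ is well defined (a single maximal $J$ exists) and that $\eta$ dominates every $w(J)$, which I expect to extract from Lascoux's formula: $\eta$ corresponds to the full product $\hat\pi_{\omega\zeta}\pi_{\zeta^{-1}\sigma}$ pushed through, i.e. to the longest permutation reachable, and any shorter chain is a ``subword'' of the maximal one.

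The harder inclusion is $[\sigma,\eta] \subseteq S$: every $\mu$ with $\sigma \leq \mu \leq \eta$ arises from some (necessarily unique, by the cancellation-free statement) increasing-index saturated chain built from the list $(r_1,\dots,r_\ell)$. Here I would go back to Lascoux's Theorem~\ref{thm:PieriGroth} and work directly with the sorting operators rather than with the combinatorial chain model. The key is to understand the mixed product $G_{(\omega)}\,\hat\pi_{\omega\zeta}\,\pi_{\zeta^{-1}\sigma}$: expanding $\hat\pi_{\omega\zeta}$ into products of $(\pi_i - 1)$'s and regrouping, one gets a signed sum $\sum \pm G_{(\omega)} \pi_{\mu}$ over various $\mu$, and $G_{(\omega)}\pi_\mu = G_{(\mu)}$ by \eqref{eq:GrothendieckPerm} applied to reduced words (with the usual caveat that $G_{(\omega)}\pi_i = G_{(\omega)}$ when a descent is absent, which is where the interval-closure phenomenon originates — the quadratic relation $\pi_i\pi_i=\pi_i$ collapses non-reduced products onto the same $G_\mu$). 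The plan is to prove by induction on $\ell(\eta)-\ell(\sigma)$, or equivalently on the number of transpositions in the list, that this signed operator sum produces exactly $\sum_{\sigma\leq\mu\leq\eta}(-1)^{\ell(\mu)-\ell(\sigma)}G_\mu$; the inductive step peels off the first transposition $r_1=(1,n)$ (or the last), splitting the interval according to whether $\sigma r_1$ covers $\sigma$, and matching this with the operator identity $\hat\pi_{i} = \pi_i - 1$ at the corresponding position.

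The main obstacle I anticipate is controlling the interaction between the two kinds of operators so that the bookkeeping really yields an \emph{interval} and not merely an order ideal or some convex-but-not-interval set — in other words, proving that the top element $\eta$ together with $\sigma$ determines the whole set, and that no permutation in $[\sigma,\eta]$ is skipped. Concretely, I would need a lemma saying: if $\sigma \leq \mu \lessdot \nu \leq \eta$ and $\mu \in S$, then $\nu \in S$, \emph{and} the witnessing chain for $\nu$ extends the one for $\mu$ by a single transposition from the prescribed list in the prescribed left-to-right order. Establishing that the list $(r_1,\dots,r_\ell)$ — with its specific ordering by columns $n, n-1, \dots, k+1$ and within each column by rows $1,\dots,k$ — is ``rich enough'' to realize every cover relation inside $[\sigma,\eta]$ while the increasing-index constraint never forces a dead end is the technical heart. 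I would try to reformulate the increasing-index saturated chains as a kind of compatible sequence / reduced-word subword condition relative to a fixed reduced word of $\eta\sigma^{-1}$ (or of $\zeta$), so that the interval property becomes the standard fact that subwords of a fixed word, under the subword/Bruhat order, form an interval; verifying that Lascoux's and Lenart--Postnikov's list gives precisely such a fixed word is then the decisive computation, and I expect it to rely on a careful choice of reduced decomposition of $\omega\zeta$ and $\zeta^{-1}\sigma$ compatible with the transposition list.
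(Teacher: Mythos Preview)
Your proposal has the difficulty of the two inclusions reversed, and the inclusion you label ``easy'' is where the real work lies.

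For the direction $[\sigma,\eta]\subseteq S$ (closure by interval: if $\mu\in S$ and $\sigma\le\nu\le\mu$ then $\nu\in S$), the paper's argument is close in spirit to what you sketch but more efficient: rather than expanding $\hat\pi_{\omega\zeta}$ and inducting on transpositions, one first writes $K_\omega\hat\pi_{\omega\zeta}\pi_{\zeta^{-1}\sigma}=\sum_{\sigma\le\mu\le\zeta}\hat K_\mu$ (a short computation in the $\hat K$ basis, since $\zeta^{-1}\sigma\in\mathfrak S_k\times\mathfrak S_{n-k}$), then changes basis back to $K$. The coefficient $c_\nu$ of $K_\nu$ becomes a signed sum over the intersection of $[\sigma,\nu]$ with the coset $\sigma(\mathfrak S_k\times\mathfrak S_{n-k})$; a lemma shows this intersection is itself a Bruhat interval, so by M\"obius inversion $c_\nu\ne0$ exactly when $\nu$ dominates no non-$k$-successor of $\sigma$. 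This description is manifestly downward-closed. Your inductive operator-peeling idea could presumably be made to work, but it is less direct.

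The genuine gap is in your ``easy'' half, $S\subseteq[\sigma,\eta]$. You write that $w(J)\le\eta$ ``should follow from $\eta=w(J_{\max})$ being the unique top element together with a subword/embedding argument''. But a priori there is no reason a unique $J_{\max}$ exists, nor that an arbitrary valid chain $J$ embeds as a subchain of the maximal one; indeed, distinct valid chains reach distinct permutations, so subword containment of $J$ in $J_{\max}$ is not even the right relation. The paper's proof of $\mu\le\eta$ is the technical heart: one first gives an intrinsic, Bruhat-order-free characterization of exactly which subwords $w$ of the transposition list $W_{\sigma,k}$ are valid chains (the \emph{compatible} subwords: those not containing a pattern $(a,c)(b,d)$ with $a<b<c<d$ while omitting the transposition $(a,d)\in W_{\sigma,k}$). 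With this in hand, one shows that for any compatible $w\ne W_{\sigma,k}$, inserting into $w$ the \emph{first} missing transposition of $W_{\sigma,k}$ yields another compatible subword; this produces a Bruhat successor of $\sigma w$ inside $S$, and iterating reaches $\eta$. Your final paragraph gestures toward a subword reformulation, but the hope that it reduces to ``subwords of a fixed word form an interval'' is misplaced: every valid chain \emph{is} a subword of $W_{\sigma,k}$, yet not every subword is a valid chain, so the standard Bruhat subword property does not apply, and isolating the compatibility condition is precisely the missing idea.
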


Theorems \ref{thm:PieriGroth} and \ref{thm:LenartPostnikov} give us that
\begin{equation}
\label{eq:direct-result}
 G_{(\omega)}  \hat{\pi}_{\omega \zeta} 
       \pi_{\zeta^{-1}\sigma} = \sum_J (-1)^{\vert J \vert}  G_{(w(J))}
\end{equation}
where the sum runs over subsets $J$ as described in Theorem \ref{thm:LenartPostnikov}. 
Our approach is to retrieve this result directly by studying the operators $\hat{\pi}$ and $\pi$ and to prove 
that the support of the sum is an interval. 
In Section \ref{sec:background}, we first recall the definition of the Bruhat order and a few properties that will be needed in the sequel. 
Then, we explain the interpretation of the $\pi$ and $\hat{\pi}$ operators in terms of the 0-Hecke algebra and of sorting operators. 
We then introduce two families of formal objects $K$ and $\hat{K}$
acted on by the $0$-Hecke algebra,
that can be interpreted either as Grothendieck polynomials or as  Key polynomials.

The first step of the proof of Theorem \ref{thm:interval} is given in Section \ref{sec:intervalClosure}, Corollary \ref{thm:intervalClosure}. After an expansion of \eqref{eq:direct-result} in the $\hat{K}$ basis and a change of basis, we prove that the support of the sum is closed by interval, \emph{i.e.} if $G_\mu$ appears in the sum, then for all $\nu$ such that $\sigma \leq \nu \leq \mu$, $G_\nu$ appers in the sum.
In Section \ref{sec:k-chains}, we study more closely the enumeration of Theorem \ref{thm:LenartPostnikov}, we give a more precise description of it and prove a few important properties. The direct proof of \eqref{eq:direct-result} is done in Section \ref{sec:k-chains-direct-proof}. 

Our main result is proved in Section \ref{sec:interval} where we show that the sum has a unique maximal element, which by Corollary \ref{thm:intervalClosure} makes it an interval. 
This is done by a direct characterization of the chains appearing in the enumeration.

In Section \ref{sec:parabolic-subgroups}, we give a generalized version of the Bruhat chain enumeration of Theorem \ref{thm:LenartPostnikov}. 
This new enumeration has no known interpretation in terms of product of polynomials but computes a more general mixed product of operators $\hat{\pi}$ and $\pi$.

\section{Background}
\label{sec:background}

\subsection{Bruhat and $k$-Bruhat order}
\label{sec:bruhat}

Theorems \ref{thm:LenartPostnikov} and \ref{thm:interval} both rely on the Bruhat order on permutations. In this Section, we recall the definition and properties of comparisons between permutations that we shall use constantly in our proofs. 
These properties can be found in \cite{LascouxCoxeter}, where the structure of the Bruhat order is investigated in detail. 
The following definition can be found in \cite{bourbaki}.
\begin{Definition} \label{def:bruhatclassic}
Let $\mu$, $\sigma$ be two elements of $\mathfrak{S}_n$. We say that $\mu$ is smaller than $\sigma$ and we write $\mu \leq \sigma$ if some subword of some reduced decomposition of $\sigma$ is a reduced decomposition of $\mu$.
\end{Definition}
Note that the Bruhat order contains both the left and right weak orders on permutations but is not their union. 
Also, the definition is symmetric, if $\mu \leq \sigma$ then $\mu^{-1} \leq \sigma^{-1}$ and it is equivalent to this local form:
\begin{Definition}
\label{def:bruhatRecurs}
$\mu$ is a successor of $\sigma$ for the Bruhat order if there is a transposition $\tau$ such that $\sigma \tau = \mu$ and  $\ell(\mu) = \ell(\sigma) + 1$. Such a transposition $\tau$ is called a \emph{Bruhat transposition of the permutation $\sigma$}.
\end{Definition}
If such a transposition exists, then there is also a transposition $\tau'$ such that $\tau' \sigma = \mu$ and the symmetry is preserved.
Ehresmann gives another criterion (historically, the first one), also described in \cite{LascouxCoxeter}.
\begin{Proposition}
\label{prop:BruhatComparisions}
Let $\mu, \sigma \in \mathfrak{S}_n$. 
\begin{align}
\label{eq:BruhatProjection}
\nonumber
\sigma \leq \mu \Leftrightarrow 
&(\forall h, 1 \leq h < n),(\forall \ell,  1 \leq \ell <n) \\
&\# \lbrace y \in p_h(\sigma_1, \dots \sigma_\ell) \rbrace \leq \# \lbrace y \in p_h(\mu_1, \dots, \mu_\ell) \rbrace
\end{align}
where $p_h$ is the projection of a permutation word on the alphabet $\lbrace x,y \rbrace$ by sending $1,\dots,h$ on $x$, and $h+1,\dots,n$ on $y$. This is equivalent to say that all reordered left factors of $\sigma$ are smaller or equal componentwise than the reordered left factors of the same size of $\mu$.
\end{Proposition}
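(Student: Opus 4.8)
The plan is to prove both implications of Ehresmann's criterion, after first disposing of the elementary equivalence between its two stated forms. For a permutation $\sigma$, write $N_h^\ell(\sigma) := \#\{i \le \ell : \sigma_i > h\}$ for the quantity appearing on the right-hand side, so that the criterion reads $N_h^\ell(\sigma) \le N_h^\ell(\mu)$ for all $h,\ell$. Sorting the left factor $(\sigma_1,\dots,\sigma_\ell)$ increasingly as $a_1 < \dots < a_\ell$ and $(\mu_1,\dots,\mu_\ell)$ as $b_1 < \dots < b_\ell$, one has $N_h^\ell(\sigma) = \#\{j : a_j > h\}$, and a one-line counting argument shows that $a_j \le b_j$ for every $j$ holds if and only if $N_h^\ell(\sigma) \le N_h^\ell(\mu)$ for every $h$ (threshold counts determine, and are determined by, componentwise domination of sorted vectors). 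This settles the asserted equivalence of the two formulations, so it remains to prove $\sigma \le \mu \Leftrightarrow N_h^\ell(\sigma) \le N_h^\ell(\mu)$ for all $h,\ell$.

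For the forward implication I would argue along a saturated chain. By Definition \ref{def:bruhatRecurs} it suffices to treat a single covering step $\sigma \lessdot \sigma(a,b)$, where $a<b$ and $\sigma(a)<\sigma(b)$; transitivity along a chain from $\sigma$ to $\mu$ then gives the general case. Such a step replaces, in every left factor of length $\ell \in [a,b)$, the entry $\sigma(a)$ by the strictly larger entry $\sigma(b)$, and leaves the factor's multiset unchanged for $\ell < a$ or $\ell \ge b$. Since replacing an entry by a larger one can only increase each threshold count $N_h^\ell$, every $N_h^\ell$ weakly increases under the step, which is exactly what is needed.

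The converse is the substantial direction, and I would prove it by induction on $\ell(\mu) - \ell(\sigma)$. Assuming the threshold inequalities and $\sigma \ne \mu$, the goal is to produce a Bruhat cover $\sigma \lessdot \sigma'$ that still satisfies $N_h^\ell(\sigma') \le N_h^\ell(\mu)$ for all $h,\ell$; induction applied to the pair $(\sigma',\mu)$, together with $\sigma \lessdot \sigma'$, then yields $\sigma \le \mu$. To build $\sigma'$, I would locate the smallest $\ell$ at which the sorted left factor of $\sigma$ is strictly dominated by that of $\mu$, and raise $\sigma$ at that scale by the least possible amount: concretely, swap a suitable entry $\sigma(a)$ with the smallest entry lying to its right and exceeding it, a transposition which is automatically a covering transposition. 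By construction this swap alters only the left factors of lengths in a single interval $[a,b)$, so only there must the threshold inequalities be re-examined.

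The hard part will be precisely this re-examination: showing that the chosen cover preserves every inequality $N_h^\ell(\sigma') \le N_h^\ell(\mu)$. The genuine subtlety is that one cannot restrict to adjacent transpositions. For instance, with $\sigma = 2314$ and $\mu = 2413$ the criterion holds, yet no adjacent swap applied to $\sigma$ (raising its length) or to $\mu$ (lowering its length) preserves domination, while the single non-adjacent cover $\sigma(2,4) = \mu$ does; hence the induction must be allowed to use arbitrary covering transpositions, and the interval $[a,b)$ of affected left factors must be chosen so that the raised factors stay below those of $\mu$. I would control this by exploiting the minimality in the choice of $\ell$ and of $b$ (no entry strictly between $\sigma(a)$ and $\sigma(b)$ occurs to the right) to show that each threshold count which increases was previously \emph{strictly} below the corresponding count for $\mu$. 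Finally, the symmetry $\sigma \le \mu \Leftrightarrow \sigma^{-1} \le \mu^{-1}$ recorded after Definition \ref{def:bruhatclassic} provides a dual route, descending from $\mu$ rather than ascending from $\sigma$, which can be used to cross-check the construction.
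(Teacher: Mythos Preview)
The paper does not prove this proposition at all: it is recorded in Section~\ref{sec:bruhat} as Ehresmann's classical criterion and simply attributed to \cite{LascouxCoxeter}, with no argument given. So there is no ``paper's own proof'' to compare against; you have supplied a proof where the paper chose to cite one.

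That said, your outline follows the standard route to Ehresmann's tableau criterion and the easy pieces are fine: the equivalence between threshold counts $N_h^\ell$ and componentwise domination of sorted prefixes is a routine counting, and the forward implication along a saturated chain is correct as written. The converse, however, is still only a plan. You have not actually specified the cover $\sigma\lessdot\sigma'$: ``locate the smallest $\ell$ \dots\ and swap a suitable entry $\sigma(a)$ with the smallest entry lying to its right and exceeding it'' leaves both $a$ and the target position undetermined, and the verification that all threshold inequalities survive is deferred to a sentence about ``exploiting minimality''. A workable choice is to take $a$ to be the last position where the prefixes of $\sigma$ and $\mu$ (as sets) first differ, and $b>a$ the position of the least value exceeding $\sigma(a)$ among $\sigma(a{+}1),\dots,\sigma(n)$; then $(a,b)$ is a covering transposition, and one checks that for every $\ell\in[a,b)$ and every $h$ with $\sigma(a)\le h<\sigma(b)$ the strict inequality $N_h^\ell(\sigma)<N_h^\ell(\mu)$ held before the swap. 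Your $2314\le 2413$ example already shows why this care is needed. If you spell out this choice and carry through the threshold check, the argument will be complete; as written, the hard half remains a sketch.
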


As an example, if $\sigma = 2143$ and $\mu = 4132$, then $\sigma \leq \mu$. One has that $ \lbrace 2 \rbrace \leq \lbrace 4 \rbrace$, $ \lbrace 2,1 \rbrace \leq \lbrace 4,1 \rbrace$, $ \lbrace 4,2,1 \rbrace \leq \lbrace 4,3,1 \rbrace$. Or equivalently, all projections of $\sigma$ on the alphabet $ \lbrace x,y \rbrace$ ($yxyy$, $xxyy$, and $xxyx$ respectively) have less $y$ in their left factors than the corresponding projections of $\mu$ ($yxyy$, $yxyx$, and $yxxx$ respectively). In this Proposition, we compare left factors of the permutation and the symmetry seems broken. But actually, a more general criterion based on comparisons with bigrasmannian permutations can be given where the symmetry appears clearly. 
In this article, we shall often use a symmetry with respect to our parameter $k$, so this second criterion is  essential for us. 
It is also described in \cite{LascouxCoxeter}.

\begin{Proposition}
We have $\sigma \leq \mu$ if and only if there exists $k$ with $1 \leq k \leq n$ such that for all $1 \leq h < n$, and for all $1 \leq \ell \leq k$ and $k < \ell' \leq n$, we have
\begin{align}
\label{eq:BruhatkProjectionLeft}
\# \lbrace y \in p_h(\sigma_1, \dots \sigma_\ell) \rbrace &\leq \# \lbrace y \in p_h(\mu_1, \dots, \mu_\ell) \rbrace \text{~ and} \\
\label{eq:BruhatkProjectionRight}
\# \lbrace y \in p_h(\sigma_{\ell'}, \dots \sigma_n) \rbrace &\geq \# \lbrace y \in p_h(\mu_{\ell'}, \dots, \mu_n) \rbrace.
\end{align}
If it is true for some $k$, then it is true for all $k$ and \eqref{eq:BruhatProjection} corresponds to $k=n$.
\end{Proposition}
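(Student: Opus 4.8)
The plan is to reduce the symmetric $k$-criterion to the one-sided Ehresmann criterion of Proposition \ref{prop:BruhatComparisions} by observing that each right-factor inequality is, after complementation, exactly a left-factor inequality. The whole argument rests on a single counting identity. Fix $h$ with $1 \leq h < n$. For any permutation $\sigma$ the projection $p_h$ sends the letters $h+1, \dots, n$ to $y$, so the total number of $y$'s appearing in the full word $p_h(\sigma_1, \dots, \sigma_n)$ is exactly $n-h$, independently of $\sigma$. Splitting the word at position $\ell$ therefore gives
\begin{equation}
\label{eq:ycount}
\# \lbrace y \in p_h(\sigma_{\ell+1}, \dots, \sigma_n) \rbrace = (n-h) - \# \lbrace y \in p_h(\sigma_1, \dots, \sigma_\ell) \rbrace,
\end{equation}
and the same identity with $\mu$ in place of $\sigma$.

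First I would use \eqref{eq:ycount} to rewrite the right-hand inequalities. Taking $\ell' = \ell + 1$ in \eqref{eq:BruhatkProjectionRight} and substituting \eqref{eq:ycount} for both $\sigma$ and $\mu$, the common term $n-h$ cancels and the inequality
\[
\# \lbrace y \in p_h(\sigma_{\ell'}, \dots, \sigma_n) \rbrace \geq \# \lbrace y \in p_h(\mu_{\ell'}, \dots, \mu_n) \rbrace
\]
becomes, after multiplying by $-1$ and reversing the sense,
\[
\# \lbrace y \in p_h(\sigma_1, \dots, \sigma_\ell) \rbrace \leq \# \lbrace y \in p_h(\mu_1, \dots, \mu_\ell) \rbrace,
\]
which is precisely the left-factor inequality \eqref{eq:BruhatkProjectionLeft} at split position $\ell = \ell' - 1$. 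Hence the right condition at $\ell'$ and the left condition at $\ell' - 1$ carry exactly the same information.

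Next I would assemble the two families for a fixed $k$. The left inequalities range over $\ell = 1, \dots, k$, while by the previous paragraph the right inequalities, as $\ell'$ runs over $k+1, \dots, n$, are equivalent to the left inequalities at $\ell' - 1 = k, k+1, \dots, n-1$. Together these cover every split position $\ell = 1, \dots, n-1$, the value $\ell = k$ merely appearing in both lists and the boundary $\ell = n$ being a trivial equality since both sides then count all $n-h$ copies of $y$. Thus for \emph{every} choice of $k$ the combined system \eqref{eq:BruhatkProjectionLeft}--\eqref{eq:BruhatkProjectionRight} coincides with the full set of left-factor inequalities \eqref{eq:BruhatProjection}.

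Finally, Proposition \ref{prop:BruhatComparisions} identifies \eqref{eq:BruhatProjection} with the relation $\sigma \leq \mu$, which yields all three assertions at once: the $k$-criterion is equivalent to $\sigma \leq \mu$ for each $k$; if it holds for one $k$ it holds for all, each being equivalent to the same statement; and at $k = n$ the right family is empty while the left family reduces to \eqref{eq:BruhatProjection}, as claimed. I do not expect a genuine obstacle here, since the computation is a clean algebraic manipulation; the only points needing care are the off-by-one bookkeeping in the index shift $\ell' \mapsto \ell' - 1$ and checking that the overlap at $\ell = k$ together with the trivial boundary at $\ell = n$ leaves no gap in the range of split positions.
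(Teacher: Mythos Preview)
Your argument is correct. The complementation identity \eqref{eq:ycount} is exactly the mechanism that makes the right-factor condition at $\ell'$ equivalent to the left-factor condition at $\ell'-1$, and your bookkeeping on the ranges is accurate: the left family covers $\ell=1,\dots,k$, the translated right family covers $\ell=k,\dots,n-1$, and together with the trivial case $\ell=n$ this matches the full Ehresmann criterion \eqref{eq:BruhatProjection}.

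As for comparison with the paper: there is nothing to compare. The paper does not prove this proposition; it merely states it as a known symmetric refinement of the Ehresmann criterion and refers the reader to \cite{LascouxCoxeter}. Your self-contained derivation from Proposition~\ref{prop:BruhatComparisions} via the counting identity is the natural elementary proof and would be a welcome addition where the paper only gives a citation.
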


As an example, to obtain as above $\sigma = 2143 \leq \mu = 4132$, one can compare the first two left and right factors: $ \lbrace 2 \rbrace \leq \lbrace 4 \rbrace$, $ \lbrace 2,1 \rbrace \leq \lbrace 4,1 \rbrace$, $ \lbrace 3 \rbrace \geq \lbrace 2 \rbrace$ and $ \lbrace 4, 3 \rbrace \geq \lbrace 3, 2 \rbrace$.

We also need the notion of \emph{$k$-Bruhat order}, $\leq_k$, which was introduced in \cite{LascouxKBruhat} and studied in \cite{BergeronKbruhat}. 
\begin{Definition}
\label{def:kbruhat}
$\tau = (a,b)$ is a $k$-Bruhat transposition of $\sigma$ if $\tau$ is a Bruhat transposition of $\sigma$ by Definition \ref{def:bruhatRecurs}, and $a \leq k < b$. If $\tau$ is a $k$-Bruhat transposition of $\sigma$ and $\mu = \sigma\tau$, then $\mu$ is a successor of $\sigma$ for the $k$-Bruhat order.
\end{Definition}
There is also a characterization of the $k$-Bruhat order given in \cite[Theorem 1.1.2]{BergeronKbruhat}.
\begin{Theoreme}
\label{thm:KBruhatChar}
Let $\mu,\sigma \in \mathfrak{S}_n$, then $\sigma \leq_k \mu$ if and only if,
\begin{enumerate}[label=(\roman{*}), ref=(\roman{*})]
\item \label{cond:kbruhat1} 
$a \leq k$ implies $\sigma(a) \leq \mu(a)$, and,
$b > k$ implies $\sigma(b) \geq \mu(b)$
\item \label{cond:kbruhat3}
If $a<b$, $\sigma(a)<\sigma(b)$ and $\mu(a)>\mu(b)$ then $ a \leq k <b$.
\end{enumerate}
\end{Theoreme}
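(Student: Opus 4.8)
The plan is to prove both implications by analysing how a single $k$-Bruhat cover (Definition \ref{def:kbruhat}) changes the one-line notation and the inversion set of a permutation, and then to chain these local facts. Throughout I write $\mathrm{Inv}(\sigma)=\{(a,b):a<b,\ \sigma(a)>\sigma(b)\}$, and I call a pair $(a,b)$ with $a<b$ \emph{$k$-straddling} if $a\le k<b$ and \emph{same-side} otherwise. The engine of the whole argument is a local lemma I would prove first: if $\sigma'=\sigma(a_0,b_0)$ with $a_0\le k<b_0$ is a $k$-Bruhat cover of $\sigma$ (so $\sigma(a_0)<\sigma(b_0)$ and no position $c\in(a_0,b_0)$ carries a value strictly between $\sigma(a_0)$ and $\sigma(b_0)$, by Definition \ref{def:bruhatRecurs}), then (a) $\sigma'(i)\ge\sigma(i)$ for every $i\le k$ and $\sigma'(j)\le\sigma(j)$ for every $j>k$, and (b) every pair in $\mathrm{Inv}(\sigma')\setminus\mathrm{Inv}(\sigma)$ is $k$-straddling while every pair in $\mathrm{Inv}(\sigma)\setminus\mathrm{Inv}(\sigma')$ is same-side. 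Part (a) is immediate since only positions $a_0,b_0$ move and $\sigma'(a_0)=\sigma(b_0)>\sigma(a_0)$, $\sigma'(b_0)=\sigma(a_0)<\sigma(b_0)$. For part (b) I would run through the pairs $(d,a_0),(a_0,d),(d,b_0),(b_0,d)$ for $d\notin\{a_0,b_0\}$: the cover condition kills every $c\in(a_0,b_0)$ whose value lies between the swapped values, so the only status changes occur for $d<a_0$ or $d>b_0$ with $\sigma(d)$ strictly between them, and a short case check shows the created pairs are exactly $(a_0,b_0)$, $(d,b_0)$ with $d<a_0$, and $(a_0,d)$ with $d>b_0$ (all $k$-straddling), while the destroyed pairs are $(d,a_0)$ with $d<a_0$ and $(b_0,d)$ with $d>b_0$ (all same-side).

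Necessity then chains the lemma along a defining chain $\sigma=\sigma_0\lessdot_k\dots\lessdot_k\sigma_m=\mu$. By (a) the value at each position $\le k$ only weakly increases and at each position $>k$ only weakly decreases, which is exactly condition \ref{cond:kbruhat1}. By (b) the set of $k$-straddling inversions only grows and the set of same-side inversions only shrinks along the chain; hence any $(a,b)\in\mathrm{Inv}(\mu)\setminus\mathrm{Inv}(\sigma)$ cannot be same-side (a same-side pair inverted in $\mu$ would already be inverted in $\sigma$), so it is $k$-straddling, which is condition \ref{cond:kbruhat3}.

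For sufficiency I would induct on the monovariant $N(\sigma,\mu):=\sum_{a\le k}\bigl(\mu(a)-\sigma(a)\bigr)$, a nonnegative integer by condition \ref{cond:kbruhat1}. First, $N(\sigma,\mu)=0$ forces $\sigma=\mu$: it makes $\sigma,\mu$ agree on all positions $\le k$, hence they carry the same multiset of values on the positions $>k$, and the contrapositive of condition \ref{cond:kbruhat3} says $\sigma$ and $\mu$ induce the \emph{same relative order} of values within each of the two blocks, so they must agree on the positions $>k$ too. The inductive step is the core: assuming $\sigma\ne\mu$, I must exhibit a $k$-Bruhat cover $\sigma\lessdot_k\sigma'=\sigma(a,b)$, $a\le k<b$, for which $(\sigma',\mu)$ still satisfies \ref{cond:kbruhat1} and \ref{cond:kbruhat3}; then $N(\sigma',\mu)=N(\sigma,\mu)-(\sigma(b)-\sigma(a))<N(\sigma,\mu)$, so induction gives $\sigma'\le_k\mu$ and hence $\sigma\lessdot_k\sigma'\le_k\mu$. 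To build the cover I would locate a ``misplaced'' value: since $\sigma\ne\mu$ while relative orders agree within each block, the multiset of values on positions $\le k$ differs between $\sigma$ and $\mu$, so some value sits at a position $>k$ in $\sigma$ and a position $\le k$ in $\mu$; taking $b>k$ to be the $\sigma$-position of the largest such value, I would choose $a\le k$ so that the swap respects the nesting $\mu(b)\le\sigma(a)<\sigma(b)\le\mu(a)$ needed to preserve \ref{cond:kbruhat1} and satisfies the cover condition of Definition \ref{def:bruhatRecurs}.

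The main obstacle is exactly this choice of transposition: I must pick $(a,b)$ so that three demands hold simultaneously — it is a genuine length-increasing Bruhat cover (no intermediate position carries an intermediate value), the swap keeps every moved value between its $\sigma$- and $\mu$-values so \ref{cond:kbruhat1} survives, and no forbidden same-side inversion is created so \ref{cond:kbruhat3} survives. I expect to secure the cover condition by an extremal choice (taking the value to move, or the partner position $a\le k$, to be extremal — for instance the largest deficient value below $\sigma(b)$ lying to the left of $b$) and to verify \ref{cond:kbruhat3} by re-reading it as ``equal relative order within each block'' and checking that moving the extremal misplaced value reverses no same-side pair. Proving that a single extremal pair meets all three constraints at once, rather than just one or two, is the delicate combinatorial heart of the argument.
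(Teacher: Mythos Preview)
The paper does not prove this theorem at all: it is quoted as \cite[Theorem~1.1.2]{BergeronKbruhat} and used as a tool thereafter, so there is no argument in the paper to compare your proposal against.

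On its own merits, your plan is the standard one and your necessity direction is complete and correct. The local lemma for a single $k$-cover is right: the cover condition on positions $c\in(a_0,b_0)$ is exactly what forces every pair whose inversion status changes to involve $a_0$ or $b_0$ together with a position outside $[a_0,b_0]$, and your straddling/same-side dichotomy for created versus destroyed inversions follows. Chaining (a) along a saturated $k$-chain gives \ref{cond:kbruhat1}, and chaining (b) gives \ref{cond:kbruhat3}.

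The sufficiency direction, however, is only a sketch, and you yourself flag the gap. The monovariant $N(\sigma,\mu)$ and the base case are fine (your ``equal value-sets on each side plus no new same-side inversions forces equality'' argument is correct because two bijections onto the same finite totally ordered set with one set of non-inversions contained in the other must coincide). What is missing is an explicit, verified recipe for the cover $\sigma\lessdot_k\sigma'=\sigma(a,b)$ such that $(\sigma',\mu)$ again satisfies \ref{cond:kbruhat1} and \ref{cond:kbruhat3}. Your proposal (largest misplaced value at a position $b>k$, then an extremal partner $a\le k$) is the right instinct, but you have not shown that a single choice simultaneously (1) is a Bruhat cover, (2) satisfies the nesting $\mu(b)\le\sigma(a)<\sigma(b)\le\mu(a)$ needed for \ref{cond:kbruhat1}, and (3) creates no same-side pair $(a',b')$ with $\sigma'(a')<\sigma'(b')$ and $\mu(a')>\mu(b')$. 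Each of these is easy to arrange separately; the content is arranging them together, and ``I expect an extremal choice to work'' is not yet a proof. Bergeron and Sottile give one concrete recipe (an extremal choice of $\sigma(a)$ among positions $a\le k$ with $\sigma(a)\ne\mu(a)$, then an extremal $\sigma(b)$) and carry out the three verifications by short contradiction arguments using \ref{cond:kbruhat1} and \ref{cond:kbruhat3} for $(\sigma,\mu)$; filling in that step would complete your argument.
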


\subsection{Sorting operators, the generators of the 0-Hecke algebra}
\label{sec:IntroHecke}

The main purpose of this article is to compute the product of $\pi$ and $\hat{\pi}$ operators \eqref{eq:direct-result}. 
These operators are actually generators of the 0-Hecke algebra. 
In this Section, we give the main properties of these two families and explain how they are linked to the Bruhat order. 
Then, we interpret them as sorting operators and give a formal framework in which our problem can be expressed clearly. 

The Hecke algebra of the symmetric group $\mathfrak{S}_n$, $\mathcal{H}(t_1,t_2)$, is the algebra generated by the elements $\lbrace T_i, 1 \leq i \leq n-1 \rbrace$ and relations
\begin{align}
\label{eq:HeckeBr1}
&T_iT_j = T_jT_i, &\text{ if } |i-j|>1, \\
\label{eq:HeckeBr2}
&T_{i+1}T_iT_{i+1} = T_iT_{i+1}T_i, &\text{ if } 1  \leq i \leq n-2, \\
\label{eq:HeckeQuad}
&(T_i - t_1)(T_i - t_2)=0
\end{align} 
where $t_1$ and $t_2$ are two scalar parameters that commute with the $T_i$. The relations \eqref{eq:HeckeBr1} and \eqref{eq:HeckeBr2} are the \emph{braid relations} whereas \eqref{eq:HeckeQuad} is the quadratic relation. The algebra has a natural basis $(T_\sigma)_{\sigma \in \mathfrak{S}_n}$ where, for $s_{i_1}\ldots s_{i_m}$ a reduced decomposition of $\sigma$,

\begin{equation}
\label{eq:Tsigma}
T_\sigma = T_{i_1} \ldots T_{i_m}.
\end{equation}

The 0-Hecke algebra is the specialization of $\mathcal{H}(t_1,t_2)$ where $t_2=0$ and $t_1=1$. With these parameters, its generators $T_i$ now satisfy the quadratic relation \eqref{eq:PiQuad}, we then call them $\pi_i$ and their action on polynomials is described by \eqref{eq:PiOnPolynomials}. The family  $\pi = (\pi_\sigma)_{\sigma \in \mathfrak{S}_n}$ forms a basis of the 0-Hecke algebra. Now, if we set

\begin{equation}
\label{eq:RelPiHatPi}
\hat{\pi}_i = \pi_i - 1,
\end{equation}
these $\hat{\pi}_i$ are consistent with their definition in Section \ref{sec:IntroPolynomials} and also satisfy the braid relations \eqref{eq:HeckeBr1} and \eqref{eq:HeckeBr2} as well as the quadratic relation \eqref{eq:HeckeQuad} with specialization $t_2=0$ and $t_1=-1$, that is \eqref{eq:HatPiQuad}. Thus the family $\hat{\pi} = (\hat{\pi}_\sigma)_{\sigma \in \mathfrak{S}_n}$ constitutes a second basis of the 0-Hecke algebra generated by the $\pi_i$. The change of basis between $\pi$ and $\hat{\pi}$ is directly related to the Bruhat order on permutations.

By (\ref{eq:RelPiHatPi}), we have $\hat{\pi}_\sigma = (\pi_{i_1}-1)(\pi_{i_2}-1) \ldots (\pi_{i_m}-1)$, where $s_{i_1}s_{i_2} \ldots s_{i_m}$ is a reduced decomposition of $\sigma$, and one can prove the following property.

\begin{Proposition} \label{def:bruhatpi}
Let $\mu$, $\sigma$ be two elements of $\mathfrak{S}_n$. Then $\pi_\sigma$ (respectively $\hat{\pi}_\sigma$) can be expanded in the basis $\hat{\pi}$ (respectively $\pi$) by a sum over the Bruhat order

   \begin{align}
   		\label{eq:HatPiToPi}
		\hat{\pi}_\sigma &= \sum_{\mu \leq \sigma} (-1)^{\ell(\mu) - \ell(\sigma)} \pi_{\mu},\\
   		\label{eq:PiToHatPi}
		\pi_\sigma &= \sum_{\mu \leq \sigma} \hat{\pi}_\mu.
   \end{align}

\end{Proposition}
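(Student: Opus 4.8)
The plan is to prove the two expansions \eqref{eq:HatPiToPi} and \eqref{eq:PiToHatPi} \emph{simultaneously} by induction on $\ell(\sigma)$, the only non-formal ingredient being a combinatorial description of the Bruhat interval $[\mathrm{id},\sigma]$. The base case $\ell(\sigma)=0$ is immediate: $\hat{\pi}_{\mathrm{id}}=\pi_{\mathrm{id}}=1$ and $\mathrm{id}$ is the unique permutation with $\mu\le\mathrm{id}$.

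For the inductive step I would fix a simple reflection $s_i$ with $\sigma=\sigma' s_i$ and $\ell(\sigma)=\ell(\sigma')+1$, and first establish the \textbf{key lemma}: writing $A:=\{\nu\le\sigma':\ \ell(\nu s_i)=\ell(\nu)+1\}$ and $As_i:=\{\nu s_i:\nu\in A\}$, one has the \emph{disjoint} decomposition
\[
[\mathrm{id},\sigma]\ =\ A\ \sqcup\ As_i ,
\]
and $\nu\mapsto\nu s_i$ is a length-raising bijection from $A$ onto $As_i$. This is a standard consequence of the lifting property of the Bruhat order \cite{LascouxCoxeter}: the inclusion $A\subseteq[\mathrm{id},\sigma]$ is clear since $\sigma'<\sigma$; for $\nu\in A$ the lifting property applied to $\nu\le\sigma'$ (both ascending under $s_i$) gives $\nu s_i\le\sigma' s_i=\sigma$, so $As_i\subseteq[\mathrm{id},\sigma]$; conversely, if $\mu\le\sigma$ then, since $\sigma s_i=\sigma'<\sigma$, the lifting property puts $\mu$ into $A$ when $\mu s_i>\mu$ and puts $\mu s_i$ into $A$ (whence $\mu\in As_i$) when $\mu s_i<\mu$; finally the union is disjoint because every element of $A$ ascends under $s_i$ while every element of $As_i$ descends under $s_i$. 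I expect this lemma to be the main obstacle — not because it is deep, but because it requires invoking the lifting property with care in each of the two directions (and keeping track of the parity of lengths).

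Granting the lemma, the inductive step is routine bookkeeping with the quadratic relations. Since $\pi$ and $\hat{\pi}$ are multiplicative along reduced words, \eqref{eq:RelPiHatPi} gives $\hat{\pi}_\sigma=\hat{\pi}_{\sigma'}(\pi_i-1)$ and $\pi_\sigma=\pi_{\sigma'}(\hat{\pi}_i+1)$. For $\nu\in A$ one has $\pi_\nu\pi_i=\pi_{\nu s_i}$ and $\hat{\pi}_\nu\hat{\pi}_i=\hat{\pi}_{\nu s_i}$, whereas for $\nu\in[\mathrm{id},\sigma']\setminus A$ one has $\nu s_i<\nu$, so $\pi_\nu=\pi_{\nu s_i}\pi_i$ and $\hat{\pi}_\nu=\hat{\pi}_{\nu s_i}\hat{\pi}_i$, and then \eqref{eq:PiQuad} and the relation $\hat{\pi}_i^2=-\hat{\pi}_i$ (which follows from \eqref{eq:PiQuad} and \eqref{eq:RelPiHatPi}) give $\pi_\nu\pi_i=\pi_\nu$ and $\hat{\pi}_\nu\hat{\pi}_i=-\hat{\pi}_\nu$. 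Plugging the induction hypothesis for $\hat{\pi}_{\sigma'}$ (resp. $\pi_{\sigma'}$) into $\hat{\pi}_{\sigma'}(\pi_i-1)$ (resp. $\pi_{\sigma'}(\hat{\pi}_i+1)$) and distributing, all terms indexed by $\nu\notin A$ cancel, and the surviving terms — indexed once by $\nu\in A$ and once by $\mu=\nu s_i\in As_i$, with the sign $(-1)^{\ell(\mu)-\ell(\sigma)}$ in the case of \eqref{eq:HatPiToPi}, tracked using $\ell(\sigma')=\ell(\sigma)-1$ and $\ell(\nu s_i)=\ell(\nu)+1$ — combine into a sum over $A\sqcup As_i=[\mathrm{id},\sigma]$. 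This is exactly \eqref{eq:HatPiToPi} (resp. \eqref{eq:PiToHatPi}), closing the induction.

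Finally, I would remark that once \eqref{eq:HatPiToPi} is established, \eqref{eq:PiToHatPi} is equivalent to saying that the two triangular matrices $\bigl((-1)^{\ell(\mu)-\ell(\sigma)}[\mu\le\sigma]\bigr)$ and $\bigl([\mu\le\sigma]\bigr)$ are mutually inverse, i.e. to Verma's formula $\sum_{\nu\le\mu\le\sigma}(-1)^{\ell(\mu)}=(-1)^{\ell(\sigma)}\,\delta_{\nu,\sigma}$ for the Möbius function of the Bruhat order; but carrying out the two inductions in parallel, as above, keeps the argument self-contained and avoids invoking it.
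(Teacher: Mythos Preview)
Your proof is correct and complete. It is, however, a genuinely different route from the one the paper takes. The paper does not argue by induction on $\ell(\sigma)$; instead it writes $\pi_\sigma=(\hat{\pi}_{i_1}+1)\cdots(\hat{\pi}_{i_m}+1)$ for a fixed reduced word $s_{i_1}\cdots s_{i_m}$ of $\sigma$, expands, and observes that the terms are indexed by subwords of this reduced word. Reduced subwords yield exactly the $\hat{\pi}_\mu$ for $\mu\le\sigma$ by the subword characterisation of the Bruhat order (Definition~\ref{def:bruhatclassic}); the remaining point, that the contributions of the \emph{non}-reduced subwords cancel, is not proved in the paper but cited from \cite[Lemma~1.13]{LascouxGroth}.

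Your inductive argument trades that external cancellation lemma for the lifting property: the decomposition $[\mathrm{id},\sigma]=A\sqcup As_i$ lets the quadratic relations $\pi_i^2=\pi_i$ and $\hat{\pi}_i^2=-\hat{\pi}_i$ kill the terms with $\nu s_i<\nu$ one step at a time, so the multiplicity-one statement is built into the recursion rather than proved after the fact. This makes your argument self-contained, at the mild cost of hiding the direct link between \eqref{eq:PiToHatPi} and the subword definition of Bruhat order that the paper's expansion makes visible. Your final remark tying \eqref{eq:PiToHatPi} to the M\"obius function of the Bruhat order is a nice bonus, and in fact the paper invokes exactly that M\"obius computation later (see \eqref{eq:cnu} and the reference to \cite{VermatMobius}).
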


The fact that $\pi_\sigma$ is a sum over $\hat{\pi}_\mu$ where $\mu \leq \sigma$ is obvious by \eqref{eq:RelPiHatPi}. One has only to prove that the multiplicity of each element of the sum is $1$, \textit{i.e.}, that the non-reduced words in   $(\hat{\pi}_{i_1}+1)(\hat{\pi}_{i_2}+1) \ldots (\hat{\pi}_{i_m}+1)$ cancel. This has been done in \cite[Lemma 1.13]{LascouxGroth}.

Although the relations between $\pi$ and $\hat{\pi}$ are simply expressed by (\ref{eq:RelPiHatPi}), and both bases are well-known, the general description of mixed products of both $\pi_i$ and $\hat{\pi}_i$ is an open problem.
In this article, we compute the specific product of $\pi$ and $\hat{\pi}$ given in Section \ref{sec:IntroPolynomials}.

The 0-Hecke algebra admits a faithful realization by sorting operators. Let $K = (K_\sigma)_{\sigma \in \mathfrak{S}_n }$ and $\hat{K} = (\hat{K}_\sigma)_{ \sigma \in \mathfrak{S}_n}$, then the actions of $\pi_i$ and $\hat{\pi}_i$ on the free modules spanned by  the $K_\sigma$ and the  $\hat{K}_\sigma$ are

   \begin{equation}
   		\label{eq:PiAction}
			K_\sigma \pi_i = 
			\begin{cases}
				K_{\sigma s_i} \text{ if }\sigma_i > \sigma_{i+1}, \\
				K_\sigma \text{ otherwise},
			\end{cases}
   \end{equation}

   \begin{equation}
   		\label{eq:HatPiAction}
		\hat{K}_\sigma \hat{\pi}_i =
		\begin{cases}
			\hat{K}_{\sigma s_i} \text{ if }\sigma_i > \sigma_{i+1}, \\
			-\hat{K}_\sigma \text{ otherwise},
		\end{cases}
   \end{equation}

where $s_i$ is the simple transposition switching $\sigma_i$ and $\sigma_{i+1}$. One can check that \eqref{eq:PiAction} and \eqref{eq:HatPiAction} are consistent with the braid relations \eqref{eq:HeckeBr1} and \eqref{eq:HeckeBr2}. The case where $\sigma_i \leq \sigma_{i+1}$ is implied by the quadratic relation \eqref{eq:HeckeQuad}. 
Now, we set
\begin{equation}
\label{eq:Komega}
 K_\omega = \hat{K}_\omega
\end{equation}
where $\omega = \left[ n,n-1,\dots,1 \right]$ is the maximal permutation. Thus $K$ and $\hat{K}$ are two bases of the same module. The change of basis between $K$ and $\hat{K}$ is a direct corollary of relations (\ref{eq:HatPiToPi}) and (\ref{eq:PiToHatPi}):

   \begin{align}
   		\label{eq:HatKToK}
		\hat{K}_\sigma &= \sum_{\mu \geq \sigma} (-1)^{\ell(\mu) - \ell(\sigma)} K_\mu, \\
   		\label{eq:KToHatK}
		K_\sigma &= \sum_{\mu \geq \sigma} \hat{K}_\mu.
   \end{align}

The $K_\sigma$ and $\hat{K}_\sigma$ can be mapped to either \emph{Grothendieck polynomials} or \emph{Key polynomials} (Demazure characters for type $A$) 
by sending $K_\omega$ to \eqref{eq:DominantGrothendieck} or to \eqref{eq:DominantKeyPolynomials}. Expanding 

\begin{equation}
\label{eq:product}
K_\omega \hat{\pi}_{\omega \zeta} \pi_{\zeta^{-1} \sigma}
\end{equation}
on the $K$ basis is equivalent to expanding \eqref{eq:PieriGrothFest} on Grothendieck polynomials. The two kinds of sorting operators $\pi_i$ and $\hat{\pi}_i$ give us the flexibility of working with two bases that correspond to two different actions of the 0-Hecke algebra. 

\section{Closure by interval}
\label{sec:intervalClosure}

Mixed products of operators of both types $\pi$ and $\hat{\pi}$ are not well described in general. But we have from Theorem \ref{thm:LenartPostnikov} that the expansion of \eqref{eq:product} in the $K$ basis is a sum over a set of permutations with coefficients $1$ or $-1$ 
depending on the length of the permutation. The purpose of this section is to prove that this set is closed by interval, our first step towards
the proof of Theorem \ref{thm:interval}.

Expression \eqref{eq:product} can be expanded in both bases $K$ and $\hat{K}$. Actually,  the expansion in the $\hat{K}$ basis is easier, we give it in Proposition \ref{thm:HatK}. Then, by a change of basis, we obtain a first description of the expansion in the $K$ basis (Proposition \ref{prop:complement}). This leads directly to Corollary \ref{thm:intervalClosure}, the main result of the Section.

The first part of the computation of \eqref{eq:product} consists in applying the $\hat{\pi}_i$ operators. This is directly given by the definition of the operators and can be easily expressed on both bases $K$ and $\hat{K}$. One has

\begin{align}
\label{eq:KomegaHatPi}
K_\omega \hat{\pi}_{\omega \zeta} &= \hat{K}_\omega \hat{\pi}_{\omega \zeta} \\
\label{eq:HatKzeta}
&= \hat{K}_\zeta \\
\label{eq:BigInterval}
&= \sum_{\mu \geq \zeta} (-1)^{\ell(\mu) - \ell(\zeta)} K_\mu .
\end{align}

A reduced decomposition of $\omega \zeta = s_{i_1} s_{i_2} \dots s_{i_{\ell(\omega) - \ell(\zeta)}}$ corresponds to a path in the weak order between the maximal permutation $\omega$ and $\zeta$. It divides the values $1, \dots, n$ into two blocks: to the left or to the right of $k$. The product $\hat{\pi}_{\omega \zeta}$ is reduced and applied to the maximal permutation, and so we obtain \eqref{eq:HatKzeta}, or by a change of basis \eqref{eq:BigInterval}. Now, a reduced decomposition of $\zeta^{-1} \sigma$, used 
for the product of $\pi$ operators, is a path in the weak order between $\zeta$ and $\sigma$: it rearranges the values inside the two blocks left of $k$ and right of $k$ and does not contain any transposition $s_k$. One can apply the $\pi_{\zeta^{-1} \sigma}$ operators starting either from the point given in \eqref{eq:HatKzeta}, or from the sum over the interval $[\zeta, \omega]$ in \eqref{eq:BigInterval}. In this Section, we shall consider the first approach.

\begin{Proposition}
\label{thm:HatK}
Let $\sigma$, $k$, and $\zeta$ be as in Theorem \ref{thm:PieriGroth}, then 
\begin{equation}
\label{eq:resultHatK}
K_\omega \hat{\pi}_{\omega \zeta} \pi_{\zeta^{-1} \sigma} = \sum_{\zeta \geq \mu \geq \sigma} \hat{K}_\mu.
\end{equation}
\end{Proposition}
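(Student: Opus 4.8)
The statement to prove is $K_\omega \hat{\pi}_{\omega\zeta}\pi_{\zeta^{-1}\sigma} = \sum_{\zeta \geq \mu \geq \sigma}\hat{K}_\mu$. The excerpt has already established the first stage: $K_\omega \hat{\pi}_{\omega\zeta} = \hat{K}_\zeta$ by \eqref{eq:HatKzeta}, because the reduced word for $\omega\zeta$ is a descending path in the weak order from $\omega$ and the $\hat{\pi}$-action \eqref{eq:HatPiAction} always lands on a single basis element with sign $-1$ per non-descent — but here the product is reduced so no non-descents occur and we simply get $\hat{K}_\zeta$. So the task reduces to showing that applying the $\pi$-operators indexed by a reduced word for $\zeta^{-1}\sigma$, starting from $\hat{K}_\zeta$, produces $\sum_{\zeta \geq \mu \geq \sigma}\hat{K}_\mu$.

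\textbf{Main step: induction on $\ell(\zeta^{-1}\sigma)$ via one $\pi_i$ at a time.} Write $\zeta^{-1}\sigma = s_{i_1}\cdots s_{i_m}$ reduced; set $\sigma' = \sigma s_{i_m}$, so $\ell(\sigma') = \ell(\sigma) - 1$ and $\zeta^{-1}\sigma' = s_{i_1}\cdots s_{i_{m-1}}$ is reduced. By induction, $K_\omega\hat{\pi}_{\omega\zeta}\pi_{\zeta^{-1}\sigma'} = \sum_{\zeta \geq \mu \geq \sigma'}\hat{K}_\mu$, and I must show that applying $\pi_{i_m}$ to this sum yields $\sum_{\zeta \geq \mu \geq \sigma}\hat{K}_\mu$. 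To do this I would rewrite the $\pi_i$-action on the $\hat{K}$ basis: from \eqref{eq:RelPiHatPi}, $\hat{K}_\mu\pi_i = \hat{K}_\mu(\hat{\pi}_i + 1) = \hat{K}_\mu + \hat{K}_\mu\hat{\pi}_i$, which by \eqref{eq:HatPiAction} equals $\hat{K}_{\mu s_i}$ if $\mu_i > \mu_{i+1}$ (a descent) and equals $\hat{K}_\mu - \hat{K}_\mu = 0$ if $\mu_i < \mu_{i+1}$. So $\pi_i$ acts on the $\hat{K}$ basis by \emph{killing} elements with an ascent at position $i$ and \emph{moving} elements with a descent at $i$ to $\mu s_i$. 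Hence $\left(\sum_{\zeta \geq \mu \geq \sigma'}\hat{K}_\mu\right)\pi_{i_m} = \sum_{\substack{\zeta \geq \mu \geq \sigma' \\ \mu_{i_m} > \mu_{(i_m)+1}}}\hat{K}_{\mu s_{i_m}}$. The crux is then a purely order-theoretic claim: the map $\mu \mapsto \mu s_{i_m}$ is a bijection from $\{\mu : \zeta \geq \mu \geq \sigma',\ \mu$ has a descent at $i_m\}$ onto $\{\nu : \zeta \geq \nu \geq \sigma\}$. Injectivity is clear. For the image description I would argue: (a) if $\mu \geq \sigma'$ has a descent at $i_m$, then $\mu s_{i_m} \geq \sigma' s_{i_m} = \sigma$ — this needs a lifting-type lemma for Bruhat order; (b) $\mu \leq \zeta$ and $\zeta$ has an ascent at $i_m$ (since $\zeta^{-1}\sigma = s_{i_1}\cdots s_{i_m}$ reduced forces $\ell(\zeta s_{i_m}) < \ell(\zeta)$... wait, one must check the orientation carefully: $\zeta^{-1}\sigma$ starts with $s_{i_1}$ and ends with $s_{i_m}$, and what governs the descent of $\zeta$ at position $i_m$ is whether $\zeta$ itself, not $\zeta^{-1}$, has a descent there — this is where the fact that the reduced word for $\zeta^{-1}\sigma$ "rearranges values inside the two blocks left and right of $k$ and contains no $s_k$" gets used, cf.\ the paragraph before the Proposition), so $\mu s_{i_m} \leq \zeta$ whenever $\mu$ had a descent at $i_m$; (c) conversely every $\nu$ with $\zeta \geq \nu \geq \sigma$ arises this way: set $\mu = \nu s_{i_m}$; one must verify $\mu$ has a descent at $i_m$ (equivalently $\nu$ has an ascent at $i_m$), which follows because $\sigma = \sigma' s_{i_m}$ has an ascent at $i_m$ and $\nu \geq \sigma$... hmm, that direction is not automatic, so more care is needed — I would instead use the symmetric characterization of Bruhat order from Proposition~\ref{prop:BruhatComparisions} together with the position of $k$.

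\textbf{Where the difficulty lies.} The bookkeeping of \emph{which} positions are descents/ascents of $\zeta$, $\sigma$, $\sigma'$, $\mu$, $\nu$ is the delicate part, and getting the bijection's image to be exactly the Bruhat interval $[\sigma, \zeta]$ — neither too big nor too small — is the real content. The key structural input is that $\zeta$ is the \emph{maximal}-length coset representative (so within each block $\{1,\dots,k\}$ and $\{k+1,\dots,n\}$ of \emph{positions}, $\zeta$ is decreasing, hence $\zeta$ has no descent between consecutive positions both inside one block but has a descent at position $k$), while $\zeta^{-1}\sigma$ uses only transpositions $s_i$ with $i \neq k$. This guarantees that each $s_{i_m}$ we peel off acts \emph{within} a block and that $\zeta$ has an ascent at every such $i_m$ — which is precisely what makes $\mu s_{i_m} \leq \zeta$ go through and makes the interval stay pinned below $\zeta$. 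So the plan is: (1) reduce to the one-step claim via induction; (2) translate the $\pi_i$-action to the $\hat{K}$ basis as "kill-or-move"; (3) prove the bijection $\mu \mapsto \mu s_{i_m}$ between descent-at-$i_m$ elements of $[\sigma',\zeta]$ and all of $[\sigma,\zeta]$ using a Bruhat lifting lemma plus the block structure of $\zeta$; (4) conclude. Step (3) is the main obstacle, and I expect the cleanest route is through the Ehresmann-style componentwise criteria of Proposition~\ref{prop:BruhatComparisions}, applied separately to the two blocks determined by $k$.
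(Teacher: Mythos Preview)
Your computation of the $\pi_i$-action on the $\hat K$ basis contains an error that breaks the rest of the argument. In the descent case $\mu_i>\mu_{i+1}$ you correctly get $\hat K_\mu\hat\pi_i=\hat K_{\mu s_i}$ from \eqref{eq:HatPiAction}, but then $\hat K_\mu\pi_i=\hat K_\mu+\hat K_{\mu s_i}$, \emph{not} $\hat K_{\mu s_i}$ alone: the rule is ``kill or duplicate'', not ``kill or move''. There is a second slip in the length bookkeeping: since $\zeta$ is the \emph{longest} element of its coset, $\ell(\sigma)=\ell(\zeta)-m$ and $\ell(\sigma')=\ell(\zeta)-(m-1)=\ell(\sigma)+1$, so $\sigma'>\sigma$ and $[\sigma',\zeta]\subsetneq[\sigma,\zeta]$. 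With your stated action and your stated bijection the codomain $[\sigma,\zeta]$ is strictly larger than the domain, so the map cannot be onto; and in any case every element of the image $\mu s_{i_m}$ has an \emph{ascent} at $i_m$, so the image misses every descent element of $[\sigma,\zeta]$. With both corrections the induction does go through: for $\mu\in[\sigma',\zeta]$ with a descent at $i_m$, the pair $\{\mu,\mu s_{i_m}\}$ lies in $[\sigma,\zeta]$, and these pairs partition $[\sigma,\zeta]$ into its descent-half and its ascent-half. This is precisely the lifting property, using that $\zeta$ has a descent at $i_m$ (since $i_m\neq k$ and $\zeta$ is decreasing on each block) while $\sigma$ has an ascent there.

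The paper's proof avoids the induction altogether. It applies \eqref{eq:PiToHatPi} in one stroke:
\[
\hat K_\zeta\,\pi_{\zeta^{-1}\sigma}\;=\;\hat K_\zeta\sum_{\nu\le\zeta^{-1}\sigma}\hat\pi_\nu .
\]
Every $\nu\le\zeta^{-1}\sigma$ lies in $\mathfrak{S}_k\times\mathfrak{S}_{n-k}$, and since $\zeta$ is decreasing on each block, every $\hat\pi_i$ in a reduced word for $\nu$ meets a genuine descent, so $\hat K_\zeta\hat\pi_\nu=\hat K_{\zeta\nu}$ with no sign. As $\nu$ runs over $[e,\zeta^{-1}\sigma]$ the product $\zeta\nu$ runs over $[\sigma,\zeta]$, and the formula follows in two lines. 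Your inductive route, once fixed, effectively re-derives the lifting property step by step; it works, but it is considerably longer than simply invoking the global change of basis that the paper already has available.
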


\begin{proof}
We start from \eqref{eq:HatKzeta} and apply the $\pi_i$ operators.

\begin{equation}
\label{eq:HatKzetaPi}
\hat{K}_\zeta \pi_{\zeta^{-1} \sigma} = \hat{K}_\zeta \sum_{\nu \leq \zeta^{-1}\sigma}\hat{\pi}_\nu.
\end{equation}
This leads directly to \eqref{eq:resultHatK}, as every subword $\nu$ of a reduced decomposition of $\zeta^{-1}\sigma$ is an element of $\mathfrak{S}_k \times \mathfrak{S}_{n-k}$. The permutation $\zeta$ is divided into two blocks, $\zeta(1)\!>\dots>\!\zeta(k)$ and $\zeta(k+1)\!>\dots>\zeta(n)$. 
Each operator sorts the permutation either in the left block or in the right block. 
\end{proof}

Note that the interval $ \left[  \sigma, \zeta \right] $ is actually the coset $\sigma ( \mathfrak{S}_k \times \mathfrak{S}_{n-k})$.
From Proposition \ref{thm:HatK}, one can obtain an expansion of \eqref{eq:product} on the $K$ basis by applying a change of basis to the sum \eqref{eq:resultHatK}.

\begin{Proposition}
\label{prop:complement}
Let $\Succs_{\sigma}^{-k}$ be the set of successors of $\sigma$, $\sigma' = \sigma \tau$ where $\tau$ is not a $k$-transposition of $\sigma$. Then (\ref{eq:product}) expanded on the $K$ basis is equal to

\begin{equation}
\label{eq:complement}
\sum (-1)^{\ell(\nu) - \ell(\sigma)} K_\nu
\end{equation}
summing over permutations $\nu$ such that $\nu \geq \sigma$ and, $\forall \sigma' \in \Succs_{\sigma}^{-k}$, $\nu \ngeq \sigma'$.
\end{Proposition}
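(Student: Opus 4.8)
The plan is to start from Proposition~\ref{thm:HatK}, which expresses \eqref{eq:product} as $\sum_{\zeta \geq \mu \geq \sigma} \hat{K}_\mu$, and convert this to the $K$ basis using the change-of-basis formula \eqref{eq:KToHatK}, namely $K_\sigma = \sum_{\mu \geq \sigma} \hat{K}_\mu$. Equivalently, I need to invert: write each $\hat{K}_\mu$ appearing on the right as a signed sum of $K_\nu$ via \eqref{eq:HatKToK}, $\hat{K}_\mu = \sum_{\nu \geq \mu} (-1)^{\ell(\nu)-\ell(\mu)} K_\nu$, and collect coefficients. So the coefficient of $K_\nu$ in \eqref{eq:product} becomes
\begin{equation*}
\sum_{\substack{\zeta \geq \mu \geq \sigma \\ \mu \leq \nu}} (-1)^{\ell(\nu) - \ell(\mu)}.
\end{equation*}
This sum is nonzero only if $\nu \geq \sigma$ (otherwise the index set is empty, since $\mu \geq \sigma$ and $\mu \leq \nu$ force $\nu \geq \sigma$). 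The heart of the matter is to evaluate this alternating sum over the set $\{\mu : \sigma \leq \mu \leq \nu,\ \mu \leq \zeta\}$, i.e. over $[\sigma, \nu] \cap [\sigma,\zeta] = [\sigma,\nu] \cap \sigma(\mathfrak{S}_k \times \mathfrak{S}_{n-k})$, using the remark that $[\sigma,\zeta]$ is exactly the coset $\sigma(\mathfrak{S}_k \times \mathfrak{S}_{n-k})$.

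The key structural observation I would use is this: a permutation $\mu$ with $\sigma \leq \mu$ lies in the coset $\sigma(\mathfrak{S}_k \times \mathfrak{S}_{n-k})$ if and only if no Bruhat successor of $\sigma$ of the form $\sigma' = \sigma\tau$ with $\tau$ a $k$-transposition lies below $\mu$ --- in other words, $\mu \in [\sigma,\zeta]$ iff $\mu \ngeq \sigma'$ for every $\sigma' \in \Succs_\sigma \setminus \Succs_\sigma^{-k}$; dually, $\mu \notin [\sigma,\zeta]$ iff $\mu \geq \sigma'$ for some $\sigma' \in \Succs_\sigma^{-k}$, which should be justified by the $k$-Bruhat characterization (Theorem~\ref{thm:KBruhatChar}) together with the elementary fact that any chain from $\sigma$ upward that leaves the coset must do so by a first step that is a $k$-transposition. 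Granting this, the set of $\mu$ being summed over in the coefficient of $K_\nu$ is precisely $\{\mu \in [\sigma,\nu] : \mu \ngeq \sigma' \text{ for all } \sigma' \in \Succs_\sigma^{-k}\}$. I would then argue by inclusion--exclusion / a telescoping (Möbius-type) argument on $[\sigma,\nu]$: the full alternating sum $\sum_{\mu \in [\sigma,\nu]} (-1)^{\ell(\nu)-\ell(\mu)}$ equals $0$ unless $\nu = \sigma$, and more generally for any down-closed subset obtained by removing upper sets $[\sigma',\nu]$, the alternating sum can be computed. The cleanest route: the map $\mu \mapsto \mu r$ for a suitable fixed Bruhat transposition $r$ (to be chosen as the transposition realizing a successor $\sigma' = \sigma r \in \Succs_\sigma^{-k}$, when one exists with $\sigma' \leq \nu$) gives a sign-reversing involution on the relevant index set, forcing the sum to vanish. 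When no $\sigma' \in \Succs_\sigma^{-k}$ satisfies $\sigma' \leq \nu$, the index set is all of $[\sigma,\nu]$, the sum is $[\nu = \sigma]$, but in that case $\nu \in [\sigma,\zeta]$ already and a direct check gives coefficient $1 = (-1)^{\ell(\nu)-\ell(\sigma)}$ only for $\nu = \sigma$ --- wait, this needs care, so more precisely I expect the correct statement to be that the coefficient equals $(-1)^{\ell(\nu)-\ell(\sigma)}$ exactly when $\nu \geq \sigma$ and $\nu \ngeq \sigma'$ for all $\sigma' \in \Succs_\sigma^{-k}$, and $0$ otherwise, which is exactly \eqref{eq:complement}.

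Concretely the steps are: (1) expand \eqref{eq:resultHatK} into the $K$ basis via \eqref{eq:HatKToK} and write down the coefficient of $K_\nu$ as the alternating sum over $[\sigma,\nu] \cap \sigma(\mathfrak{S}_k \times \mathfrak{S}_{n-k})$; (2) prove the coset characterization $[\sigma,\zeta] = \{\mu \geq \sigma : \mu \ngeq \sigma'\ \forall \sigma' \in \Succs_\sigma^{-k}\}$ using Theorem~\ref{thm:KBruhatChar} and the lifting property of Bruhat order; (3) split into the two cases according to whether some $\sigma' \in \Succs_\sigma^{-k}$ lies in $[\sigma,\nu]$, and in the first case produce a sign-reversing involution on $[\sigma,\nu] \cap \sigma(\mathfrak{S}_k \times \mathfrak{S}_{n-k})$ showing the alternating sum is $0$, while in the second case note $[\sigma,\nu] \subseteq \sigma(\mathfrak{S}_k \times \mathfrak{S}_{n-k})$ so that $\hat{K}$-to-$K$ conversion restricted to this interval is exact, yielding coefficient $(-1)^{\ell(\nu)-\ell(\sigma)}$. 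I expect step (3), constructing the involution (or, alternatively, setting up the right Möbius-function bookkeeping on the interval poset minus a union of principal up-sets), to be the main obstacle: one must verify the involution is well-defined, fixed-point-free, sign-reversing, and preserves membership in both $[\sigma,\nu]$ and the coset. The coset characterization in step (2) is standard but needs to be stated carefully since the whole argument hinges on it; I would likely isolate it as a small lemma referencing the $k$-Bruhat order material from Section~\ref{sec:background}.
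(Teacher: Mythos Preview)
Your setup matches the paper exactly through step~(1): expand \eqref{eq:resultHatK} via \eqref{eq:HatKToK} and read off the coefficient of $K_\nu$ as the alternating sum over $[\sigma,\nu]\cap[\sigma,\zeta]$. After that, however, the argument goes off the rails in two ways.

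First, there is a persistent confusion between the two complementary families of successors. The elements of $\Succs_\sigma^{-k}$ are the successors $\sigma\tau$ with $\tau$ a \emph{non}-$k$-transposition; these are precisely the successors of $\sigma$ that \emph{stay} in the coset $\sigma(\mathfrak{S}_k\times\mathfrak{S}_{n-k})=[\sigma,\zeta]$. Your ``coset characterization'' in step~(2) correctly invokes the $k$-transposition successors $\Succs_\sigma\setminus\Succs_\sigma^{-k}$, but the ``dually'' sentence and the description of the summation set immediately afterward switch to $\Succs_\sigma^{-k}$, which is the wrong set. More importantly, the characterization of step~(2) is simply not what is needed: the question is not which $\mu$ lie in the coset, but for which $\nu$ the alternating sum over $[\sigma,\nu]\cap[\sigma,\zeta]$ is nonzero.

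Second, both branches of step~(3) fail as written. In the case where no $\sigma'\in\Succs_\sigma^{-k}$ lies below $\nu$, the conclusion $[\sigma,\nu]\subseteq\sigma(\mathfrak{S}_k\times\mathfrak{S}_{n-k})$ is false (take $\sigma=12$, $k=1$, $\nu=21$: here $\Succs_\sigma^{-k}=\emptyset$ but $21$ is not in the coset). What is true, and what you actually need, is that $[\sigma,\nu]\cap[\sigma,\zeta]=\{\sigma\}$, giving coefficient $(-1)^{\ell(\nu)-\ell(\sigma)}$ immediately. In the other case, the proposed involution $\mu\mapsto\mu r$ for a fixed non-simple reflection $r$ is not sign-reversing in general (the length can jump by any odd amount) and does not obviously preserve $[\sigma,\nu]$, so it cannot serve as the cancellation mechanism.

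The paper's route avoids all of this with one structural lemma you are missing: $[\sigma,\nu]\cap\sigma(\mathfrak{S}_k\times\mathfrak{S}_{n-k})$ is itself a Bruhat interval $[\sigma,\tilde\nu]$ (Lemma~\ref{lemme:coset}, proved by an explicit greedy construction of $\tilde\nu$). Once the index set is known to be an interval, the alternating sum is a M\"obius sum \cite{VermatMobius} and vanishes unless the interval is the single point $\{\sigma\}$, i.e., unless no $\sigma'\in\Succs_\sigma^{-k}$ satisfies $\sigma'\leq\nu$. That lemma is the genuine content you would need to supply; the ``M\"obius-function bookkeeping'' you allude to cannot be carried out without it, because Bruhat order is not a lattice and the intersection of two intervals is not a priori an interval.
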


Assuming this proposition, the fact that this set is closed by interval follows immediately.

\begin{Corollaire}
\label{thm:intervalClosure}
If $K_\nu$ appears in the sum, then $\forall \nu' \in [\sigma, \nu]$,  $K_{\nu'}$ appears in the sum.
\end{Corollaire}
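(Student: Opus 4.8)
The plan is to derive Corollary~\ref{thm:intervalClosure} from Proposition~\ref{prop:complement} by showing that the support described there is downward-saturated (in the Bruhat order above $\sigma$). Recall that by Proposition~\ref{prop:complement}, $K_\nu$ appears in the sum if and only if $\nu \geq \sigma$ and, for every $\sigma' \in \Succs_\sigma^{-k}$, we have $\nu \ngeq \sigma'$. So suppose $K_\nu$ appears and let $\nu' \in [\sigma, \nu]$; I must check the same two conditions for $\nu'$. The first condition, $\nu' \geq \sigma$, is immediate from $\nu' \in [\sigma,\nu]$. For the second, I argue by contraposition: if $\nu' \geq \sigma'$ for some $\sigma' \in \Succs_\sigma^{-k}$, then since $\sigma \leq \sigma' \leq \nu' \leq \nu$, transitivity of the Bruhat order gives $\nu \geq \sigma'$, contradicting the assumption that $K_\nu$ is in the sum. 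Hence no such $\sigma'$ exists, both conditions hold for $\nu'$, and $K_{\nu'}$ appears in the sum.

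The only subtlety is purely bookkeeping: the set $\Succs_\sigma^{-k}$ is defined once and for all in terms of $\sigma$ (the successors of $\sigma$ obtained by a non-$k$-Bruhat transposition), and does \emph{not} depend on $\nu$ or $\nu'$; the defining condition for membership in the support is ``$\nu$ lies above $\sigma$ but below none of these finitely many fixed permutations $\sigma'$.'' A set cut out this way — points above a fixed point and not above any element of a fixed antichain-like family — is automatically an order ideal of the poset $[\sigma, \omega]$ once one knows it is nonempty and contains $\sigma$ (which it does, since $\sigma \ngeq \sigma'$ as $\ell(\sigma') = \ell(\sigma)+1$). So there is really nothing to prove beyond invoking transitivity.

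I do not anticipate a genuine obstacle here; the statement is an immediate logical consequence of the shape of the description in Proposition~\ref{prop:complement}, which is why the excerpt says ``follows immediately.'' The substantive work is entirely upstream, in establishing Proposition~\ref{thm:HatK} (the $\hat K$-expansion as a sum over the coset interval $[\sigma,\zeta]$) and then Proposition~\ref{prop:complement} (translating that into the $K$-basis via the change of basis \eqref{eq:HatKToK}, where the alternating-sign cancellations produce exactly the ``above $\sigma$, below no bad successor'' characterization). Given those, the proof of the corollary is the three-line transitivity argument above.
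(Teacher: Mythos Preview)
Your proof is correct and is precisely the ``follows immediately'' argument the paper has in mind: once Proposition~\ref{prop:complement} characterizes the support as $\{\nu \geq \sigma : \nu \ngeq \sigma' \text{ for all } \sigma' \in \Succs_\sigma^{-k}\}$, downward closure within $[\sigma,\omega]$ is just transitivity of the Bruhat order, exactly as you wrote.
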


To prove Proposition \ref{prop:complement}, we need the following Lemma on the Bruhat order.

\begin{Lemme}
\label{lemme:coset}
For $\sigma$, $\nu$ two permutations of $ \mathfrak{S}_n$ such that $\nu > \sigma$, and $k <n$, then the intersection of the coset $\sigma( \mathfrak{S}_k \times \mathfrak{S}_{n-k})$ and the interval $[\sigma, \nu]$ is an interval of the Bruhat order.
\end{Lemme}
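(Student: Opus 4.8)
The goal is to show that $\sigma(\mathfrak{S}_k \times \mathfrak{S}_{n-k}) \cap [\sigma, \nu]$ is an interval. The set is certainly nonempty (it contains $\sigma$) and bounded above by $\nu$, so by the general lattice-like structure of Bruhat intervals it suffices to exhibit a maximal element and to show the set is closed downward \emph{within the coset}. The plan is to use the Ehresmann-type characterization of the Bruhat order via reordered left factors (Proposition \ref{prop:BruhatComparisions}), combined with the fact that the elements of the coset $\sigma(\mathfrak{S}_k \times \mathfrak{S}_{n-k})$ are exactly the permutations $\mu$ whose first $k$ values, as a set, equal $\{\sigma_1, \dots, \sigma_k\}$ — equivalently, $\mu$ and $\sigma$ have the same projection word on the alphabet $\{x,y\}$ under the single partition $p_k$ that sends $\{\sigma_1,\dots,\sigma_k\}$ to one letter.

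\textbf{Key steps.} First I would record that membership in the coset $\sigma(\mathfrak{S}_k\times\mathfrak{S}_{n-k})$ is equivalent to: the multiset of the first $k$ entries equals $\{\sigma_1,\dots,\sigma_k\}$. Second, I would construct the candidate maximal element $\hat\eta$ of the intersection as follows: among the elements $\mu$ of the coset with $\mu \leq \nu$, take the one obtained by sorting the first block into \emph{decreasing} order and the second block into decreasing order — i.e. the unique longest element of the coset that is still $\leq \nu$; more carefully, one sorts greedily so that each reordered left factor is pushed as high as possible while staying $\leq \nu$. The crucial claim is that this greedy choice is well-defined (there is a unique such $\mu$) and dominates every element of the intersection. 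Third, I would verify closure: if $\mu$ is in the coset, $\mu \leq \nu$, and $\sigma \leq \mu' \leq \mu$ with $\mu'$ also in the coset, then $\mu' \leq \nu$ — this is immediate since $\mu' \leq \mu \leq \nu$. So the only real content is that the intersection has a single maximal element. To prove uniqueness of the maximal element, I would use the second Ehresmann criterion (the $k$-symmetric one, equations \eqref{eq:BruhatkProjectionLeft} and \eqref{eq:BruhatkProjectionRight}) applied with our specific $k$: within the coset, the left factors $(\mu_1,\dots,\mu_\ell)$ for $\ell \leq k$ range over subsets of the fixed set $\{\sigma_1,\dots,\sigma_k\}$, and for $\ell > k$ over sets involving the complementary block; the constraint $\mu \leq \nu$ bounds each reordered left factor above and each reordered right factor below, and I would show these upper bounds are jointly achievable by a single permutation in the coset — by taking, in each block, the lexicographically largest admissible arrangement — using that the set of achievable reordered-left-factor sequences, intersected with the coset, is closed under componentwise join.

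\textbf{Main obstacle.} The delicate point is proving that the componentwise maxima of the reordered left factors (over all coset elements $\leq \nu$) are themselves \emph{simultaneously realized} by one permutation $\hat\eta$ lying in the coset — i.e. that the "join" stays in the coset and remains $\leq \nu$. Closure of Bruhat-below sets under such joins is false in general, so one must genuinely exploit that we are inside a single parabolic coset, where the first $k$ values form a fixed set and hence the left-factor constraints for $\ell \le k$ and the right-factor constraints for $\ell > k$ decouple into two independent sorting problems, one on each block, each of which is a chain (total order) problem with a unique greedy solution. I expect the bulk of the work to be bookkeeping with the projections $p_h$ to check that the block-wise greedy permutation indeed satisfies \eqref{eq:BruhatkProjectionLeft}–\eqref{eq:BruhatkProjectionRight} against $\nu$ and dominates every competitor; once that is in place, downward closure within the coset is trivial and the interval claim follows.
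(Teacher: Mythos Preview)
Your plan is correct and is essentially the same as the paper's: the paper also constructs the candidate maximum $\tilde\nu$ by a greedy position-by-position rule (at each $i\le k$ take the largest remaining value of $\{\sigma_1,\dots,\sigma_k\}$ not exceeding $\nu_i$, and symmetrically from the right for $i>k$), checks it lies in the coset and is $\le\nu$, and then uses the Ehresmann projection criterion to show any coset element $\mu\le\nu$ satisfies $\mu\le\tilde\nu$. One caution: your phrase ``chain (total order) problem'' is misleading---the coset is not totally ordered---and the verification that $\tilde\nu$ dominates every competitor is not an automatic join computation but a genuine argument with the projections $p_h$ for \emph{all} heights $h$, which the paper carries out by contradiction.
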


\begin{proof}

Note that the Bruhat order is not a lattice and so, in general, the intersection of two intervals is not an interval : we need to prove that the intersection contains a unique maximal element.

For $\sigma \in \mathfrak{S}_n$, the coset $\sigma( \mathfrak{S}_k \times \mathfrak{S}_{n-k})$ corresponds to permutations $\mu$ with $ \lbrace \mu_1, \dots, \mu_k \rbrace = \lbrace \sigma_1, \dots, \sigma_k \rbrace$. We first describe the construction of the maximal element, $\tilde{\nu}$,  and then prove that it is indeed maximal. 

We set $V_1 = \lbrace \sigma_1, \dots, \sigma_k \rbrace$, then $\tilde{\nu}_1 = \max( v \in V_1 ~;~ v \leq \nu_1)$. Now $V_2 = V_1 \setminus \lbrace \tilde{\nu}_1 \rbrace$, and $\tilde{\nu}_2 = \max( v \in V_2 ~;~ v \leq \nu_2)$. We continue this construction up to $\tilde{\nu}_k$. For the right part of the permutation, we take $V_n = \lbrace \sigma_{k+1}, \dots, \sigma_n \rbrace$ and $\tilde{\nu}_n = \min(v \in V_n ~;~ v \geq \nu_{n})$, then $V_{n-1} = V_n \setminus \lbrace \tilde{\nu}_n \rbrace$ and we continue up to $\tilde{\nu}_{k+1}$. As an example, for $\sigma = 13245$, $k=2$ and $\nu = 54123$, we have $\tilde{\nu} = 31524$.

Firstly, as $\nu > \sigma$, we can always construct $\tilde{\nu}$. Indeed, for $i \leq k$, let $v_{min} = \min(v \in V_i)$. By construction, we have that $ \# \lbrace \sigma_j < v_{min} ~;~ 1 \leq j \leq k \rbrace = \# \lbrace \tilde{\nu}_j < v_{min} ~;~ 1 \leq j \leq i-1 \rbrace = \# \lbrace \nu_j < v_{min} ~;~ 1 \leq j \leq i-1 \rbrace$, and we also have by Bruhat that $ \# \lbrace \sigma_j < v_{min} ~;~ 1 \leq j \leq k \rbrace \geq \# \lbrace \nu_j < v_{min} ~;~ 1 \leq j \leq k \rbrace$, which tells us that $v_{min} \leq \nu_i$. A similar argument can be given for $i>k$. It is also clear that $\tilde{\nu}$ is in the coset $\sigma( \mathfrak{S}_k \times \mathfrak{S}_{n-k})$ and that $\tilde{\nu} \leq \nu$.

Next, let us take $\mu \in \sigma( \mathfrak{S}_k \times \mathfrak{S}_{n-k})$ with $\mu \leq \nu$ and prove that $\mu \leq \tilde{\nu}$. If we 
assume that  $\mu \nleq \tilde{\nu}$, this means there is $1 \leq h \leq n$ and $1 \leq i < n$ such that

\begin{equation}
\label{eq:coset-proof-1}
\# \lbrace \mu_j \geq h~;~ 1 \leq j \leq i \rbrace > \# \lbrace \tilde{\nu}_j \geq h ~;~ 1 \leq j \leq i \rbrace.
\end{equation}
We may assume that $i < k$, otherwise we can have a similar proof by working on right factors. Obviously, this means that $ \lbrace \mu_j ~;~ 1 \leq j \leq i \rbrace \neq \lbrace \tilde{\nu}_j ~;~ 1 \leq j \leq i \rbrace$ and more precisely, there is $1 \leq \ell \leq i$ such that $\mu_\ell \geq k$ and $\mu_{\ell} \in \lbrace \tilde{\nu}_j ~;~ i < j \leq k \rbrace$. We take $\mu_\ell$ minimal. Now we have:
\begin{equation}
\label{eq:coset-proof-2}
\mu_{\ell} \leq \tilde{\nu}_j \Leftrightarrow \mu_\ell \leq \nu_j
\end{equation}
for $1 \leq j \leq i$. We already had that $\mu_\ell \leq \tilde{\nu}_j$ implied $\mu_\ell \leq \nu_j$. Then, if $\mu_\ell \leq \nu_j$, as $\mu_{\ell} \in V_{j}$ and $\tilde{\nu}_j$ is maximal among $v \leq \nu_j$ in $V_j$, we have $\mu_\ell \leq \tilde{\nu}_j$. From \eqref{eq:coset-proof-1} and \eqref{eq:coset-proof-2}, we can now prove
\begin{align}
\# \lbrace \mu_j \geq \mu_{\ell} ~;~ 1 \leq j \leq i \rbrace >& \# \lbrace \tilde{\nu}_j \geq \mu_{\ell} ~;~ 1 \leq j \leq i \rbrace\\
& = \# \lbrace \nu_j \geq \mu_{\ell} ~;~ 1 \leq j \leq i \rbrace
\end{align}
which contradicts the fact that $\mu \leq \nu$. The equality is given by \eqref{eq:coset-proof-2}. Now, as we have taken $\mu_\ell$ minimal, if $\mu_j \geq h$ in \eqref{eq:coset-proof-1} and $\mu_j \notin \lbrace \tilde{\nu}_{j'} ~;~ 1 \leq j' \leq i \rbrace$, then $\mu_j \geq \mu_\ell$ which proves the first inequality. So we cannot have \eqref{eq:coset-proof-1} and $\tilde{\nu}$ is maximal.

\end{proof}

\begin{proof}[Proof of Proposition \ref{prop:complement}]
By a change of basis in the sum \eqref{eq:resultHatK}, one has

\begin{align}
\label{eq:DoubleKSum}
\sum_{\zeta \geq \mu \geq \sigma} \hat{K}_\mu &= \sum_{\zeta \geq \mu \geq \sigma} \sum_{\nu \geq \mu} (-1)^{\ell(\nu) - \ell(\mu)}K_\nu \\
\label{eq:Expansioncnu}
&= \sum_{\nu \geq \sigma} c_\nu K_\nu
\end{align}
where
\begin{equation}
\label{eq:cnu}
c_\nu = \sum_{\substack{\mu \leq \nu \\ \sigma \leq \mu \leq \zeta}} (-1)^{\ell(\nu) - \ell(\mu)}.
\end{equation}
By \cite{VermatMobius}, $c_\nu$ is a sum of values of the \emph{M\"obius function} of $\nu$ and $\mu$. Lemma \ref{lemme:coset} tells us that $c_\nu$ is always a sum over an interval. Consequently, we have $c_\nu \neq 0$ only when this interval is reduced to a single element $\sigma$, \textit{i.e.}, when there is no $\mu$ such that $\sigma < \mu \leq \zeta$ and $\mu \leq \nu$. It is sufficient to test only the direct successors of $\sigma$ in $[\sigma, \zeta]$ which correspond exactly to $Succs_{\sigma}^{-k}$.

\end{proof}

\section{Sum over chains of the $k$-Bruhat order}
\label{sec:k-chains}

In this Section, we give a more precise description of the enumeration of Theorem \ref{thm:LenartPostnikov}. We explain how the list of transpositions can be reduced to the size of the maximal chain and how the order is a linear extension of a partial order on transpositions. Then, we give a new proof of the Theorem based on the formula of Theorem \ref{thm:PieriGroth} and on the study of the mixed product of $\hat{\pi}$ and $\pi$ operators \eqref{eq:product}. We also prove some properties of the enumeration that will be used in the proof of  our main result in Section \ref{sec:interval}.

\subsection{Description}
\label{sec:k-chains-main}

\begin{Definition}
\label{def:Wsigma}
Let $\sigma \in \mathfrak{S}_n$ and $1 \leq k < n$. Let $W_{\sigma,k}$ be the list of $k$-Bruhat transpositions of $\sigma$, 
endowed with a total order on its transpositions defined by:
\begin{equation}
\label{eq:totalOrder}
(a,b) \prec (a',b') \Leftrightarrow \sigma(a) > \sigma(a') \text{ or } (a=a' \text{ and } \sigma(b) < \sigma(b'))
\end{equation}

\end{Definition}

Note that the relation $\prec$ depends on the permutation $\sigma$ and so we can only compare transpositions within a given list $W_{\sigma, k}$. We shall see that a partial order on transpositions, not depending on $\sigma$, can also be defined, and that $\prec$ is a linear extension of this partial order.

The following Proposition is an alternative formulation of Theorem \ref{thm:LenartPostnikov} as explained in Section \ref{sec:k-chains-order}.
We shall give a direct proof of it in Section \ref{sec:k-chains-direct-proof}.

\begin{Proposition}
\label{prop:ESigma}
If $W_{\sigma, k} = (\tau_1 \prec \tau_2 \prec \dots \prec \tau_m)$,  then

\begin{equation}
\label{eq:ESigmaResult}
K_\omega \hat{\pi}_{\omega \zeta} \pi_{\zeta^{-1} \sigma} = \mathfrak{E}_\sigma
\end{equation}
where
\begin{equation}
\label{eq:ESigma}
\mathfrak{E}_\sigma := K_\sigma \cdot (1 - \tau_1) \cdot (1-\tau_2) \cdots (1-\tau_m) 
\end{equation}
and
\begin{equation}
\label{eq:ProdKSigmaTau}
K_\mu \cdot \tau = \begin{cases}
K_{\mu\tau} \text{ if } \tau \text{ is a Bruhat transposition of } \mu, \\
0 \text{ otherwise.}
\end{cases}
\end{equation}
\end{Proposition}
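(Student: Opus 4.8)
The plan is to start from the expansion already established in Proposition \ref{thm:HatK}, namely $K_\omega \hat{\pi}_{\omega \zeta} \pi_{\zeta^{-1}\sigma} = \sum_{\zeta \geq \mu \geq \sigma} \hat{K}_\mu$, pass to the $K$ basis via the change-of-basis \eqref{eq:HatKToK}, and then recognize the resulting sum as $\mathfrak{E}_\sigma$. By Proposition \ref{prop:complement}, the left-hand side expanded on the $K$ basis equals $\sum (-1)^{\ell(\nu)-\ell(\sigma)} K_\nu$ over the permutations $\nu \geq \sigma$ such that $\nu \ngeq \sigma'$ for every $\sigma' \in \Succs_\sigma^{-k}$; by Corollary \ref{thm:intervalClosure} this support is closed by interval. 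So what remains is to show that $\mathfrak{E}_\sigma$, as defined by the iterated product \eqref{eq:ESigma}–\eqref{eq:ProdKSigmaTau}, expands on the $K$ basis as exactly the same signed sum over exactly that set.

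The key combinatorial fact to establish is that expanding $K_\sigma \cdot (1-\tau_1)\cdots(1-\tau_m)$ produces, for each subset $J = \{j_1 < \dots < j_s\}$ of indices, the term $(-1)^{|J|} K_{\sigma \tau_{j_1}\cdots\tau_{j_s}}$ precisely when $\sigma \lessdot \sigma\tau_{j_1} \lessdot \sigma\tau_{j_1}\tau_{j_2} \lessdot \dots$ is a saturated chain in the Bruhat order (so that each $\tau_{j_i}$ is a Bruhat transposition of the intermediate permutation, and the product rule \eqref{eq:ProdKSigmaTau} does not kill the term), and that this is moreover cancellation-free. I would prove this by induction on $m$. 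The nontrivial input is a lemma about $k$-Bruhat transpositions ordered by $\prec$: if $\tau = (a,b)$ with $a \leq k < b$ is a $k$-Bruhat transposition of $\sigma$, and $\tau'$ is a Bruhat transposition of $\sigma\tau$ that is $\prec$-later than $\tau$ in $W_{\sigma,k}$, then $\tau'$ is again a $k$-Bruhat transposition and the relation between $W_{\sigma\tau,k}$ and $W_{\sigma,k}$ is controlled — essentially that the $\prec$-order restricted to the transpositions still "available" after applying $\tau$ matches, so one never re-uses a transposition, and each reachable permutation arises from a unique increasing subword. This is where the specific definition \eqref{eq:totalOrder} of $\prec$ (first by $\sigma(a)$ decreasing, then by $\sigma(b)$ increasing) does its work: it is designed so that greedily reading the $\tau_i$ left to right reproduces exactly the saturated-chain condition, which is the content of the order-theoretic discussion promised for Section \ref{sec:k-chains-order}.

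The main obstacle I expect is precisely this cancellation-free / uniqueness statement: a priori the formal product $\prod (1-\tau_i)$ could produce the same permutation $\nu$ from two different increasing subwords $J \neq J'$ of $(\tau_1,\dots,\tau_m)$, with possibly different signs, and one must rule this out. The cleanest route is probably to show the map $J \mapsto \sigma\tau_{j_1}\cdots\tau_{j_s}$ (restricted to $J$ giving saturated chains) is injective, by arguing that from $\nu$ one can reconstruct $J$: reading positions from left to right, at each step the $\prec$-minimal available transposition that moves $\sigma$ "toward" $\nu$ (in the sense of the Bruhat comparison inequalities of Proposition \ref{prop:BruhatComparisions}) is forced. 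Once injectivity holds, the sign $(-1)^{|J|} = (-1)^{\ell(\nu)-\ell(\sigma)}$ is automatic from the saturated-chain condition, and comparing with the description in Proposition \ref{prop:complement} (same support, same signs) gives \eqref{eq:ESigmaResult}. I would also record, for use in Section \ref{sec:interval}, that the $\prec$-maximal increasing subword yields the permutation $\eta(\sigma,k)$ of Theorem \ref{thm:interval}, which is immediate once the greedy/uniqueness picture is in place. Finally, the equivalence with the original list $(r_1,\dots,r_\ell)$ of Theorem \ref{thm:LenartPostnikov} — that only the $k$-Bruhat transpositions of $\sigma$ among the $r_i$ ever contribute, and in the order $\prec$ — is a direct check comparing the two orderings, deferred as indicated to Section \ref{sec:k-chains-order}.
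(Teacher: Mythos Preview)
Your approach differs from the paper's and has a real gap. The paper does \emph{not} prove Proposition \ref{prop:ESigma} via Proposition \ref{prop:complement}. Instead it argues by induction on $\ell(\zeta)-\ell(\sigma)$: the base case $\sigma=\zeta$ is checked directly (Proposition \ref{prop:EZeta}), using that every $\mu\geq\zeta$ is automatically $k$-Bruhat above $\zeta$ and invoking the Bergeron--Sottile chain algorithm to produce a valid subword of $W_{\zeta,k}$; the inductive step is $\mathfrak{E}_{\sigma,k}\pi_i=\mathfrak{E}_{\sigma s_i,k}$ (Proposition \ref{prop:InductiveStep}), proved by a pairing argument in which elements $\mu\in\mathfrak{E}_{\sigma,k}$ with $\mu s_i\ngtr\sigma$ are conjugated bijectively onto $\mathfrak{E}_{\sigma s_i,k}$ (Lemma \ref{lemme:ESigmaPiNonZeroImage}) and the remaining elements cancel in $s_i$-pairs (Lemma \ref{lemme:ESigmaPiZeroImage}). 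Proposition \ref{prop:complement} plays no role in this argument; it is used only for Corollary \ref{thm:intervalClosure}.

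The gap in your plan is at the phrase ``same support''. You reduce to matching the support of $\mathfrak{E}_\sigma$ with the set $\{\nu\geq\sigma:\nu\ngeq\sigma'\ \forall\,\sigma'\in\Succs_\sigma^{-k}\}$, but you only argue injectivity of $J\mapsto\sigma\tau_{j_1}\cdots\tau_{j_s}$. You never address the surjectivity direction: that every $\nu$ in the complement set is actually reached by some valid increasing subword of $W_{\sigma,k}$. This is not a formality; it amounts to \emph{constructing} a chain, and is exactly what the paper's inductive $\pi_i$ mechanism accomplishes (or, in the base case, what the Bergeron--Sottile algorithm does). Without it, the comparison with Proposition \ref{prop:complement} does not close. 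A secondary issue: your sketched lemma about $W_{\sigma\tau,k}$ versus $W_{\sigma,k}$ is off target --- the transpositions in a valid chain remain elements of $W_{\sigma,k}$ throughout (they are $k$-Bruhat transpositions of $\sigma$, not of intermediate permutations), so what must be controlled is which $\tau_j\in W_{\sigma,k}$ are Bruhat transpositions of the current intermediate permutation, not how the list $W_{\mu,k}$ evolves along the chain.
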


In other words, $\mathfrak{E}_\sigma$ is a signed sum over subwords of $W_\sigma$ which are valid paths starting from $\sigma$ in the Bruhat order graph. We call such a subword simply a \textit{valid path}. For example: if $\sigma = 1362 \vert 54$, $k=4$, then $W_\sigma = ((2,6),(2,5),(4,6),(4,5))$ and 
\begin{equation}
\mathfrak{E}_{1362 \vert 54} = K_{1362 \vert 54} \cdot (1 - (2,6))\cdot(1 - (2,5))\cdot(1 - (4,6))\cdot(1 - (4,5)).
\end{equation}

When expanded, $\mathfrak{E}_\sigma$ can be represented as a tree which is a subgraph of the Bruhat order (Fig. \ref{fig:ESigma}).

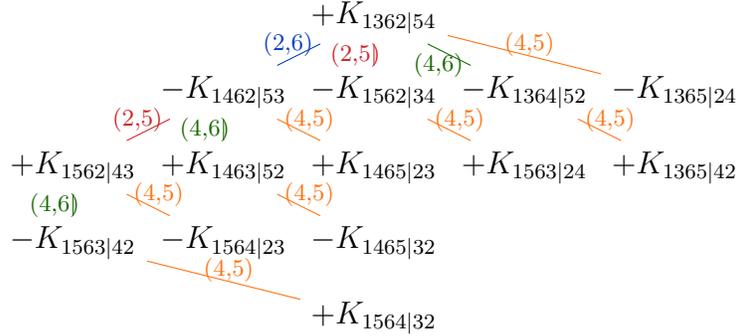
\begin{figure}[ht]
\centering
\begin{tikzpicture}
\node(K136254) at (4,0){$+K_{1362 \vert 54}$};

\node(K146253) at (2,-1){$- K_{1462 \vert 53}$};
\node(K156234) at (4,-1){$- K_{1562 \vert 34}$};
\node(K136452) at (6,-1){$- K_{1364 \vert 52}$};
\node(K136524) at (8,-1){$- K_{1365 \vert 24}$};

\node(K156243) at (0,-2){$+ K_{1562 \vert 43}$};
\node(K146352) at (2,-2){$+ K_{1463 \vert 52}$};
\node(K146523) at (4,-2){$+ K_{1465 \vert 23}$};
\node(K156324) at (6,-2){$+ K_{1563 \vert 24}$};
\node(K136542) at (8,-2){$+ K_{1365 \vert 42}$};

\node(K156342) at (0,-3){$- K_{1563 \vert 42}$};
\node(K156423) at (2,-3){$- K_{1564 \vert 23}$};
\node(K146532) at (4,-3){$- K_{1465 \vert 32}$};

\node(K156432) at (4,-4){$+ K_{1564 \vert 32}$};

\draw[color=Bleu] (K136254) -- node[ xshift=-4, yshift=4]{\scriptsize{(2,6)}} (K146253);
\draw[color=Rouge] (K136254) -- node[xshift=-7]{\scriptsize{(2,5)}} (K156234);
\draw[color=Vert] (K136254) -- node[yshift=-4, xshift=-4]{\scriptsize{(4,6)}} (K136452);
\draw[color=Orange] (K136254) -- node[yshift=4, xshift=2]{\scriptsize{(4,5)}} (K136524);

\draw[color=Rouge] (K146253) -- node[yshift=4, xshift=-4]{\scriptsize{(2,5)}} (K156243);
\draw[color=Vert] (K146253) -- node[xshift=-7]{\scriptsize{(4,6)}} (K146352);
\draw[color=Orange] (K146253) -- node[yshift=4, xshift=4]{\scriptsize{(4,5)}} (K146523);

\draw[color=Orange] (K156234) -- node[yshift=4, xshift=4]{\scriptsize{(4,5)}} (K156324);

\draw[color=Orange] (K136452) -- node[yshift=4, xshift=4]{\scriptsize{(4,5)}} (K136542);

\draw[color=Vert] (K156243) -- node[xshift=-7]{\scriptsize{(4,6)}} (K156342);
\draw[color=Orange] (K156243) -- node[yshift=4, xshift=4]{\scriptsize{(4,5)}} (K156423);

\draw[color=Orange] (K146352) -- node[yshift=4, xshift=4]{\scriptsize{(4,5)}} (K146532);

\draw[color=Orange] (K156342) -- node[yshift=4, xshift=2]{\scriptsize{(4,5)}} (K156432);

\end{tikzpicture}
\caption{The set $\mathfrak{E}_{1362 \vert 54}$.}
\label{fig:ESigma}
\end{figure}

\subsection{Partial order on $k$-transpositions}

Theorem \ref{thm:LenartPostnikov} uses a different order than Definition \ref{def:Wsigma}, but both orders are linear extensions of the following partial order.

\label{sec:k-chains-order}
\begin{Definition}
\label{def:partial-order}
We define a partial order on $k$-transpositions by
\begin{align}
\label{eq:partial-order-1}
&(a,c) \ltri (a,b) \text{ if } b<c, \\
\label{eq:partial-order-2}
&(a,c) \ltri (b,c) \text{ if } a<b.
\end{align}
\end{Definition}

\begin{Lemme}
\label{prop:linear-extension}
The order $\prec$ defined in Definition \ref{def:Wsigma} is a linear extension of $\ltri$.
\end{Lemme}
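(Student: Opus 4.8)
The statement to prove is Lemma~\ref{prop:linear-extension}: that $\prec$ (Definition~\ref{def:Wsigma}) is a linear extension of the partial order $\ltri$ (Definition~\ref{def:partial-order}). Since $\prec$ is by construction a total order on $W_{\sigma,k}$, the only content is: whenever $\tau \ltri \tau'$ for two $k$-transpositions $\tau,\tau'$ appearing in $W_{\sigma,k}$, we also have $\tau \prec \tau'$. The plan is therefore to check the two generating relations \eqref{eq:partial-order-1} and \eqref{eq:partial-order-2} of $\ltri$ one at a time, translating each into a statement about the values $\sigma(a),\sigma(b),\sigma(c)$ and invoking the definition \eqref{eq:totalOrder} of $\prec$.

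First I would handle \eqref{eq:partial-order-1}: suppose $(a,c)\ltri(a,b)$ with $b<c$, and both are $k$-Bruhat transpositions of $\sigma$. Since they share the left index $a$, by \eqref{eq:totalOrder} we must show $\sigma(b) < \sigma(c)$. Here I use the defining property of a Bruhat transposition: $(a,x)$ being a Bruhat transposition of $\sigma$ with $a<x$ forces $\sigma(a) < \sigma(x)$ (otherwise $\sigma\cdot(a,x)$ would have smaller, not larger, length), and moreover there is no index $y$ with $a<y<x$ and $\sigma(a)<\sigma(y)<\sigma(x)$. Apply this to $(a,b)$: since $a<b<c$ and $(a,b)$ is a Bruhat transposition, the value $\sigma(c)$ cannot lie strictly between $\sigma(a)$ and $\sigma(b)$. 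But we also know $\sigma(a)<\sigma(c)$ because $(a,c)$ is itself a Bruhat transposition with $a<c$. The only possibility left is $\sigma(c) > \sigma(b)$, i.e.\ $\sigma(b)<\sigma(c)$, which is exactly $(a,c)\prec(a,b)$.

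Next I would handle \eqref{eq:partial-order-2}: suppose $(a,c)\ltri(b,c)$ with $a<b$, both $k$-Bruhat transpositions of $\sigma$ (so $a<b\le k<c$). By \eqref{eq:totalOrder} it suffices to show $\sigma(a) > \sigma(b)$ (different left indices, so $\prec$ is decided by the value at the left index). Again use the no-intermediate-value property of the Bruhat transposition $(b,c)$: since $b<c$, we have $\sigma(b)<\sigma(c)$ and there is no $y$ with $b<y<c$ and $\sigma(b)<\sigma(y)<\sigma(c)$ — but that constrains indices to the right of $b$, not $a$, so I would instead use the criterion at the pair $(a,c)$: since $(a,c)$ is a Bruhat transposition with $a<c$, $\sigma(a)<\sigma(c)$ and no index $y$ with $a<y<c$ has $\sigma(a)<\sigma(y)<\sigma(c)$. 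In particular, applying this to $y=b$ (which satisfies $a<b<c$): we cannot have $\sigma(a)<\sigma(b)<\sigma(c)$. Since $\sigma(b)<\sigma(c)$ holds (from $(b,c)$ being a Bruhat transposition), the failure must come from the left inequality, i.e.\ $\sigma(b) < \sigma(a)$, which is $(a,c)\prec(b,c)$. Finally, since $\ltri$ is generated by the relations \eqref{eq:partial-order-1} and \eqref{eq:partial-order-2} under transitivity, and $\prec$ is transitive, these two cases suffice.

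I do not expect a serious obstacle here; the one point requiring care is recalling and correctly applying the characterization of Bruhat transpositions — namely that $(a,b)$ with $a<b$ is a Bruhat transposition of $\sigma$ iff $\sigma(a)<\sigma(b)$ and no $y$ with $a<y<b$ satisfies $\sigma(a)<\sigma(y)<\sigma(b)$ — and making sure in each case that the index being tested genuinely lies in the "forbidden middle" range so that the no-intermediate-value condition applies. This is a standard fact about the Bruhat order (equivalent to Definition~\ref{def:bruhatRecurs}), so I would either cite \cite{LascouxCoxeter} or state it explicitly as a preliminary observation before running the two-case argument.
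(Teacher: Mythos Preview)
Your overall strategy matches the paper's exactly: split into the two generating relations of $\ltri$ and use the no-intermediate-value characterization of a Bruhat transposition. Case~2 is correct and is essentially the paper's argument verbatim.

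Case~1, however, contains two linked errors. First, the target inequality is reversed: for $\tau=(a,c)$ and $\tau'=(a,b)$ sharing the left index, the definition \eqref{eq:totalOrder} gives $(a,c)\prec(a,b)$ iff $\sigma(c)<\sigma(b)$, not $\sigma(b)<\sigma(c)$ as you wrote. (A quick sanity check: $\sigma=[1,3,2]$, $k=1$, $b=2$, $c=3$; both $(1,2)$ and $(1,3)$ are in $W_{\sigma,1}$, and indeed $\sigma(c)=2<3=\sigma(b)$.) Second, and this is what allowed the wrong target to survive, you applied the no-intermediate-value criterion to the wrong transposition: you invoked it for $(a,b)$ at the index $c$, but $c>b$, so $c$ does not lie in the open interval $(a,b)$ and the criterion says nothing about $\sigma(c)$. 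The valid move is the mirror of what you correctly did in Case~2: use that $(a,c)$ is a Bruhat transposition, so the index $b$ with $a<b<c$ cannot satisfy $\sigma(a)<\sigma(b)<\sigma(c)$; since $(a,b)$ being a Bruhat transposition gives $\sigma(a)<\sigma(b)$, the remaining possibility is $\sigma(b)>\sigma(c)$, which is the correct conclusion. That is precisely the paper's argument.
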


\begin{proof}
Let $W_{\sigma, k}$ be as in Definition \ref{def:Wsigma}, and $\tau, \tau' \in W_{\sigma,k}$ with $\tau \ltri \tau'$. If $\tau=(a,c)$ and $\tau'=(a,b)$ with $a<b<c$, we have that $\sigma(b)<\sigma(a)$ or $\sigma(b)>\sigma(c)$ because $\tau$ is a Bruhat transposition for $\sigma$. As $\tau'=(a,b)$ is also a Bruhat transposition for $\sigma$, then $\sigma(a)<\sigma(b)$ which makes $\sigma(b)>\sigma(c)$, so $\tau \prec \tau'$.

If $\tau=(a,c)$ and $\tau'=(b,c)$ with $a<b<c$, we still have $\sigma(b)<\sigma(a)$ or $\sigma(b)>\sigma(c)$. And as $\tau'=(b,c)$ is a Bruhat transposition for $\sigma$, then $\sigma(b)<\sigma(c)$ which makes $\sigma(b)<\sigma(a)$, so $\tau \prec \tau'$.
\end{proof}

\begin{Corollaire}
\label{cor:linear-extension}
Let $\tau, \theta \in W_{\sigma, k}$. If $\tau = (a,d)$, $\theta = (a,c)$, then $\tau \prec \theta$ implies $c<d$. And if $\tau = (a,c)$, $\theta = (b,c)$, then $\tau \prec \theta$ implies $a<b$.  
\end{Corollaire}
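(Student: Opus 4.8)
The plan is to deduce the corollary from Lemma~\ref{prop:linear-extension} together with the fact that $\prec$ is a total order on $W_{\sigma,k}$. The key observation is that when two $k$-transpositions share a coordinate, the partial order $\ltri$ of Definition~\ref{def:partial-order} already compares them: for $\tau=(a,d)$ and $\theta=(a,c)$ sharing the first coordinate $a$ one has $\theta\ltri\tau$ if $d<c$ and $\tau\ltri\theta$ if $c<d$; for $\tau=(a,c)$ and $\theta=(b,c)$ sharing the second coordinate $c$ one has $\theta\ltri\tau$ if $b<a$ and $\tau\ltri\theta$ if $a<b$.

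With this in hand the proof is immediate. Since $\tau\prec\theta$ and $\prec$ is a strict total order, $\tau\neq\theta$, so in the first case $c\neq d$. If we had $d<c$, then $\theta\ltri\tau$, and Lemma~\ref{prop:linear-extension} would give $\theta\prec\tau$, contradicting $\tau\prec\theta$ by antisymmetry; hence $c<d$. The second case is identical and yields $a<b$.

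Alternatively, one can argue straight from Definition~\ref{def:Wsigma}. Unwinding $\tau=(a,d)\prec\theta=(a,c)$ via \eqref{eq:totalOrder} gives $\sigma(d)<\sigma(c)$; since $(a,d)$ is a Bruhat transposition of $\sigma$ we have $a<d$ and $\sigma(a)<\sigma(d)$, and similarly $a<c$ and $\sigma(a)<\sigma(c)$ for $(a,c)$. If $d<c$ (the case $c=d$ being excluded), then $a<d<c$ with $\sigma(a)<\sigma(d)<\sigma(c)$, so $d$ is an intermediate index witnessing that $(a,c)$ is not a Bruhat transposition of $\sigma$, a contradiction; hence $c<d$. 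Likewise, unwinding $\tau=(a,c)\prec\theta=(b,c)$ gives $\sigma(a)>\sigma(b)$, and if $b<a$ then $b<a<c$ with $\sigma(b)<\sigma(a)<\sigma(c)$ contradicts that $(b,c)$ is a Bruhat transposition of $\sigma$; hence $a<b$.

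There is no serious obstacle here: the statement is essentially the restriction of Lemma~\ref{prop:linear-extension} to pairs of transpositions sharing a coordinate, for which $\ltri$ is already total, so the only points requiring a word of care are the strictness and antisymmetry of $\prec$ (used to rule out $c=d$, resp. $a=b$) and, for the direct argument, the standard characterization of a Bruhat transposition $(p,q)$ of $\sigma$ as one with $\sigma(p)<\sigma(q)$ and no index $p<j<q$ satisfying $\sigma(p)<\sigma(j)<\sigma(q)$.
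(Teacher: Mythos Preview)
Your proposal is correct and matches the paper's intent: the paper simply writes ``The proof is immediate,'' and your first argument is exactly the immediate deduction from Lemma~\ref{prop:linear-extension} the author has in mind. Your alternative direct argument from Definition~\ref{def:Wsigma} is also valid and a nice sanity check, though not needed.
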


The proof is immediate. Lemma \ref{prop:linear-extension} and Corollary \ref{cor:linear-extension} will both contribute to the proof of
 our main result in Section \ref{sec:interval}.

The order used in Theorem \ref{thm:LenartPostnikov} is given explicitly in \cite[Theorem 4.3]{Lenart} where the combinatorial interpretation of \eqref{eq:LenartPostnikov} in terms of Bruhat chains was already proved for Grothendieck polynomials in one set of variables. It is as follows:

\begin{equation}
\label{eq:lenart-order}
(a,b) \prec' (c,d) \Leftrightarrow b>d \text{~ or ~}(b=d\text{~ and ~}a<c).
\end{equation}
It is clearly a linear extension of the partial order $\ltri$.

\begin{Proposition}
\label{prop:wsigma-generalisation}
Let $\prec'$ be any linear extension of $\ltri$. We define $W_{\sigma,k}'$ and $\mathfrak{E}_{\sigma,k}'$ just as $W_{\sigma,k}$ and $\mathfrak{E}_{\sigma,k}$ by replacing $\prec$ by $\prec'$. Then $\mathfrak{E}_{\sigma,k}' = \mathfrak{E}_{\sigma, k}$.
\end{Proposition}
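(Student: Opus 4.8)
The claim is that the formal sum $\mathfrak{E}_{\sigma,k}$ does not depend on which linear extension $\prec'$ of the partial order $\ltri$ we use to order $W_{\sigma,k}$. The natural strategy is a standard ``adjacent transposition'' argument on linear extensions: any two linear extensions of a finite poset are connected by a sequence of swaps of two consecutive elements that are incomparable in $\ltri$. So it suffices to show that for any two $k$-transpositions $\tau,\tau'$ that are \emph{incomparable} for $\ltri$, swapping their positions in the product $K_\sigma\cdot(1-\tau_1)\cdots(1-\tau_m)$ leaves the expansion unchanged. Since all the factors $(1-\tau_i)$ with $\tau_i \notin\{\tau,\tau'\}$ are untouched, it is enough to prove the local identity
\begin{equation}
\label{eq:local-swap}
K_\mu\cdot(1-\tau)\cdot(1-\tau') = K_\mu\cdot(1-\tau')\cdot(1-\tau)
\end{equation}
for every permutation $\mu$ in the coset $\sigma(\mathfrak{S}_k\times\mathfrak{S}_{n-k})$ that can arise at that stage, whenever $\tau\not\ltri\tau'$ and $\tau'\not\ltri\tau$. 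Expanding both sides, \eqref{eq:local-swap} reduces to showing $K_\mu\cdot\tau\cdot\tau' = K_\mu\cdot\tau'\cdot\tau$, using the rule \eqref{eq:ProdKSigmaTau}.

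\textbf{Analyzing incomparable $k$-transpositions.} Write $\tau=(a,b)$, $\tau'=(a',b')$ with $a,a'\le k<b,b'$. Since $\tau$ and $\tau'$ are incomparable for $\ltri$, Definition~\ref{def:partial-order} forces $a\ne a'$ and $b\ne b'$: if $a=a'$ they would be $\ltri$-comparable via \eqref{eq:partial-order-1}, and if $b=b'$ via \eqref{eq:partial-order-2}. So the four indices $a,a',b,b'$ are pairwise distinct apart from the constraint $a,a'\le k<b,b'$, and in particular $\tau$ and $\tau'$ are \emph{disjoint} transpositions. Two disjoint transpositions commute as permutations, $\tau\tau'=\tau'\tau$, so the only thing to check is that the ``vanishing'' conditions in \eqref{eq:ProdKSigmaTau} are symmetric: $\tau$ is a Bruhat transposition of $\mu$ and $\tau'$ is a Bruhat transposition of $\mu\tau$ \emph{if and only if} $\tau'$ is a Bruhat transposition of $\mu$ and $\tau$ is a Bruhat transposition of $\mu\tau'$. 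I would verify this using the length characterization: $\tau=(a,b)$ is a Bruhat transposition of $\mu$ iff $\mu(a)<\mu(b)$ and no index $c$ with $a<c<b$ has $\mu(a)<\mu(c)<\mu(b)$. Because $\tau$ and $\tau'$ move disjoint pairs of values, applying $\tau'$ first changes the values at positions $a',b'$ only; one checks that the ``no intermediate index'' condition for $\tau$ is unaffected by the swap at $\{a',b'\}$ except possibly when one of $a',b'$ lies strictly between $a$ and $b$ — and in that case a short case analysis (there are only a handful of cyclic arrangements of $a,a',b,b'$ compatible with $a,a'\le k<b,b'$) shows the condition still holds symmetrically, essentially because the relevant value $\mu(a')$ or $\mu(b')$ either blocks both composite products or neither. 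Alternatively, and more cleanly, one can phrase the whole product $K_\mu\cdot\tau\cdot\tau'$ in terms of lengths: it is nonzero iff $\ell(\mu\tau\tau') = \ell(\mu)+2$ \emph{and} $\ell(\mu\tau)=\ell(\mu)+1$; but when $\tau,\tau'$ are disjoint $k$-transpositions one in fact has $\ell(\mu\tau\tau')=\ell(\mu)+2 \iff \ell(\mu\tau)=\ell(\mu)+1 \text{ and } \ell(\mu\tau')=\ell(\mu)+1$, a statement symmetric in $\tau,\tau'$, which immediately gives \eqref{eq:local-swap}.

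\textbf{Assembling the argument.} Granting the local identity \eqref{eq:local-swap}, the proof concludes as follows. Fix the reference order $\prec$ of Definition~\ref{def:Wsigma} and any other linear extension $\prec'$ of $\ltri$. Connect them by a chain of linear extensions $\prec\ =\ \prec_0,\prec_1,\dots,\prec_N\ =\ \prec'$, each obtained from the previous by transposing two $\ltri$-incomparable elements that are adjacent in that linear order. For each step, applying \eqref{eq:local-swap} at the appropriate intermediate permutation $\mu$ — which is the product of $K_\sigma$ with the factors preceding the swapped pair, a permutation in the coset $\sigma(\mathfrak{S}_k\times\mathfrak{S}_{n-k})$ exactly when that partial product is nonzero, the only case that matters — shows $\mathfrak{E}_{\sigma,k}$ computed with $\prec_i$ equals $\mathfrak{E}_{\sigma,k}$ computed with $\prec_{i+1}$. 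Chaining these equalities yields $\mathfrak{E}_{\sigma,k}'=\mathfrak{E}_{\sigma,k}$.

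\textbf{Main obstacle.} The only subtle point is the symmetry of the non-vanishing conditions in \eqref{eq:ProdKSigmaTau} under the swap: commutativity of disjoint transpositions as group elements is free, but one must ensure that neither order of application hits a ``$0$'' that the other avoids. I expect this to come down to the length identity $\ell(\mu\tau\tau')=\ell(\mu)+2 \iff \ell(\mu\tau)=\ell(\mu)+1 \text{ and } \ell(\mu\tau')=\ell(\mu)+1$ for disjoint $k$-transpositions $\tau,\tau'$, which is where the hypothesis $\tau\not\ltri\tau'$, $\tau'\not\ltri\tau$ (hence disjointness, hence $a,a'\le k<b,b'$ all distinct) is genuinely used; without disjointness the implication can fail. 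This can be proved directly from the inversion-counting description of Bruhat transpositions, or deduced from Theorem~\ref{thm:KBruhatChar} applied to the $k$-Bruhat order.
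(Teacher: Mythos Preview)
Your approach is essentially a careful elaboration of the paper's own argument. The paper's proof is two sentences: it observes that two $k$-transpositions incomparable for $\ltri$ necessarily commute (as permutations), so reordering any subword of $W_{\sigma,k}'$ into $\prec$-order yields a subword of $W_{\sigma,k}$ representing the same permutation. The paper stops there, leaving implicit precisely the point you worry about, namely that the ``valid path'' condition is preserved under such reorderings; your adjacent-swap argument between linear extensions is exactly what fills this in.

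One correction is needed, however. Your claimed equivalence
\[
\ell(\mu\tau\tau')=\ell(\mu)+2 \;\Longleftrightarrow\; \ell(\mu\tau)=\ell(\mu)+1 \text{ and } \ell(\mu\tau')=\ell(\mu)+1
\]
is false in the $\Leftarrow$ direction: take $\mu=2143$, $k=2$, $\tau=(1,3)$, $\tau'=(2,4)$; both are disjoint $k$-Bruhat covers of $\mu$, yet $\mu\tau\tau'=4321$ has length $6=\ell(\mu)+4$. Fortunately only the $\Rightarrow$ direction is needed for the symmetry of the non-vanishing conditions, and that direction does hold: if $\ell(\mu\tau\tau')=\ell(\mu)+2$, the Bruhat interval $[\mu,\mu\tau\tau']$ has exactly two intermediate elements, and since the only factorizations of the disjoint product $\tau\tau'$ as a product of two reflections are $\tau\cdot\tau'$ and $\tau'\cdot\tau$, these intermediates are forced to be $\mu\tau$ and $\mu\tau'$. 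Hence both orders yield a saturated chain, and your local identity $K_\mu\cdot(1-\tau)\cdot(1-\tau')=K_\mu\cdot(1-\tau')\cdot(1-\tau)$ follows. With this fix, your argument is complete and in fact more thorough than the paper's.
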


\begin{proof}
Let us notice that if $\tau$ and $\theta$ are two $k$-transpositions that are not comparable by $\ltri$, they commute. So if $w' = \tau_{i_1} ' \prec' \dots \prec' \tau_{i_r}'$ is a subword of $W_{\sigma, k}'$, one can just reorder its transpositions for $\prec$ instead of $\prec'$, $w = \tau_{i_1} \prec \dots \prec \tau_{i_r}$. Now, $w$ is a subword of $W_{\sigma,k}$ and $w = w'$ when seen as a permutation. 
\end{proof}

It is now clear that if one takes a chain as in Proposition \ref{prop:ESigma} and reorder it in terms of $\prec'$, one obtains a chain as in Theorem \ref{thm:LenartPostnikov}. The other way around is not obvious as the list of transpositions described in Theorem \ref{thm:LenartPostnikov} contains all $k$-transpositions whereas the ones in $W_{\sigma, k}$ depend on the permutation $\sigma$. But the list of transpositions of Theorem \ref{thm:LenartPostnikov} can actually be reduced to $W_{\sigma, k}$.

\begin{Lemme}
\label{lemme:equiv-lenart}
All transpositions appearing in the chains of Theorem \ref{thm:LenartPostnikov} are Bruhat transpositions for the permutation $\sigma$.
\end{Lemme}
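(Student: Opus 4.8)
The plan is to analyze the list of transpositions
$(r_1,\dots,r_\ell) = \bigl((1,n),\dots,(k,n),(1,n-1),\dots,(k,n-1),\dots,(1,k+1),\dots,(k,k+1)\bigr)$
from Theorem~\ref{thm:LenartPostnikov} and show that whenever an $r_{j_t}$ occurs in a valid saturated chain $\sigma \lessdot \sigma r_{j_1} \lessdot \sigma r_{j_1} r_{j_2} \lessdot \dots$, that transposition, once "pulled back" to act on $\sigma$ itself, is a $k$-Bruhat transposition of $\sigma$. Note first that every $r_i = (a,b)$ in the list has $a \le k < b$, so it is automatically a $k$-transposition in the index sense of Definition~\ref{def:kbruhat}; what must be shown is that it is a \emph{Bruhat} transposition of $\sigma$, i.e.\ that multiplying $\sigma$ on the right by it increases length by exactly one.

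The key observation is that the transpositions are read off in a very specific order: for a fixed "right coordinate" $b$, we see $(1,b),(2,b),\dots,(k,b)$ consecutively, and the blocks are ordered by decreasing $b$, from $b=n$ down to $b=k+1$. First I would fix a chain $\sigma \lessdot \sigma r_{j_1} \lessdot \dots$ and consider the step where $r_{j_t} = (a,b)$ is applied to the intermediate permutation $\pi_{t-1} := \sigma r_{j_1}\cdots r_{j_{t-1}}$. By hypothesis $(a,b)$ is a Bruhat transposition of $\pi_{t-1}$, so $\pi_{t-1}(a) < \pi_{t-1}(b)$ and no index strictly between $a$ and $b$ has value strictly between $\pi_{t-1}(a)$ and $\pi_{t-1}(b)$. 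Now I would argue that none of the earlier transpositions $r_{j_1},\dots,r_{j_{t-1}}$ can have altered the relevant data for position $a$ relative to positions $\ge b$: each earlier $r_{j_s} = (a_s,b_s)$ has either $b_s > b$, or $b_s = b$ and $a_s < a$. Transpositions with $b_s > b$ only ever move values that, at the moment of their application, were the larger of the swapped pair into position $a_s \le k$ and push smaller values to the right; careful bookkeeping (using that the chain is saturated, so each step is a covering) shows these do not create an inversion at $(a,b)$ that was absent in $\sigma$, nor destroy one. Transpositions with $b_s = b$, $a_s < a$ swap entries at positions $a_s < a \le k < b$; since after such a swap position $b$ is untouched and position $a$ is untouched, they are irrelevant to the pair $(a,b)$. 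The upshot is that $\sigma(a) < \sigma(b)$, and moreover that the "no value in between" condition already holds for $\sigma$ on the pair $(a,b)$, so $(a,b)$ is a Bruhat transposition of $\sigma$.

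The cleanest way to package this is probably to prove, by induction on $t$, the invariant that for every transposition $r_{j_t}=(a,b)$ remaining to be applied, $(a,b)$ is a Bruhat transposition of $\pi_{t-1}$ \emph{if and only if} it is a Bruhat transposition of $\sigma$ — or, even more directly, to use the Ehresmann / reordered-left-factor criterion of Proposition~\ref{prop:BruhatComparisions} together with Theorem~\ref{thm:KBruhatChar}: all the permutations $\pi_t$ in the chain satisfy $\sigma \le_k \pi_t$, hence by condition~\ref{cond:kbruhat1} one has $\sigma(a') \le \pi_t(a')$ for $a' \le k$ and $\sigma(b') \ge \pi_t(b')$ for $b'>k$, and this monotonicity is exactly what is needed to transport the covering relation $\pi_{t-1} \lessdot \pi_{t-1}(a,b)$ back to $\sigma \lessdot \sigma(a,b)$. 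I expect the main obstacle to be the bookkeeping for the transpositions $r_{j_s}$ with $b_s > b$: one has to rule out the scenario where an earlier swap involving a column to the right of $b$ introduces a new value strictly between $\sigma(a)$ and $\sigma(b)$ into the window of positions $(a,b)$, which would mean $(a,b)$ is a length-increasing-by-more-than-one move on $\sigma$ even though it is a cover on $\pi_{t-1}$; the ordering of the list (all columns $>b$ processed before column $b$, and within column $b$ the rows processed left to right) is precisely what prevents this, and making that argument airtight is the crux of the lemma.
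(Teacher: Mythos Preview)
Your outline contains a concrete error and a more serious conceptual gap. The error: you claim that an earlier transposition $r_{j_s}=(a_s,b)$ with $a_s<a$ leaves ``position $b$ untouched''. It does not; $(a_s,b)$ swaps positions $a_s$ and $b$, so the value at $b$ changes. The gap is in your second packaging via Theorem~\ref{thm:KBruhatChar}: the inequalities $\sigma(a')\le\pi_t(a')$ for $a'\le k$ and $\sigma(b')\ge\pi_t(b')$ for $b'>k$ are \emph{not} enough to transport the covering relation back to $\sigma$. Indeed, this argument makes no use of the specific Lenart order $\prec'$, and the statement is simply false for general saturated $k$-Bruhat chains: for $\sigma=13254$ with $k=3$, the transposition $(1,4)$ is not a Bruhat transposition of $\sigma$ (position $2$ is a witness), yet $\sigma\lessdot\sigma(3,4)=13524\lessdot 13524\,(1,4)=23514$ is a saturated $k$-Bruhat chain. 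So monotonicity alone cannot possibly do the work; the order of the list is essential, exactly as you suspect in your last sentence, but you have not said how to use it.

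The paper's proof avoids all of this bookkeeping by running the contrapositive. Suppose $\tau_i=(a,b)$ is \emph{not} a Bruhat transposition of $\sigma$; then there is a witness $c$ with $a<c<b$ and $\sigma(a)<\sigma(c)<\sigma(b)$. One shows by induction on $j<i$ that the triple inequality $\sigma'(a)<\sigma'(c)<\sigma'(b)$ persists for $\sigma'=\sigma\tau_1\cdots\tau_{j-1}$. The induction step is a short case analysis on which of $a,b,c$ the transposition $\tau_j$ touches; the Lenart order forces, e.g., that $\tau_j=(a,y)$ implies $y>b$, that $\tau_j=(y,b)$ implies $y<a$, and that $\tau_j=(y,c)$ with $y<c$ is impossible, and in each remaining case the fact that $\tau_j$ is itself a cover of $\sigma'$ rules out the one value placement that would destroy the witness. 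This is much cleaner than tracking forward what each earlier swap does to the pair $(a,b)$, and it makes transparent exactly where $\prec'$ enters.
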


\begin{proof}
Let $\tau_1 \prec' \dots \prec' \tau_\ell$ be a chain of Theorem \ref{thm:LenartPostnikov}. Let us suppose that $\tau_i = (a,b)$ is not a Bruhat transposition for $\sigma$. It means that there exists $c$, $a < c < b$ with $\sigma(a) < \sigma(c) < \sigma(b)$. This order is not changed by the transpositions $\tau_1, \dots, \tau_{i-1}$. For $1 \leq j < i$, we set $\sigma' = \sigma \tau_1 \dots \tau_{j-1}$ and we suppose that we still have $\sigma'(a) < \sigma'(c) < \sigma'(b)$. If $\tau_j = (a,y)$, then $b<y$ so $a<c<y$ and $\sigma'(c) > \sigma'(y)$. If $\tau_j = (c,y)$, then $c<b<y$ and $\sigma'(b)>\sigma'(y)$. We cannot have $\tau_j = (y,c)$ because $\tau_j \prec' \tau_i$. If $\tau_j = (y,b)$, then $y<a$ so $y<c<b$ and $\sigma'(c)<\sigma'(y)$. By this, we have that $\tau_i$ is not a Bruhat transposition for $\sigma \tau_1 \dots \tau_{i-1}$ which contradicts the definition of the chain.
\end{proof}

The fact that each permutation is obtained by exactly one chain is given by \cite[Proposition 4.8]{Lenart}. In \cite[Algorithm 3.1.1]{BergeronKbruhat}, Bergeron and Sottile describe an algorithm that give a a saturated chain between two permutations in the $k$-Bruhat order. It is compatible with the partial order on transpositions we defined and can be used to obtain the unique chain of transpositions corresponding to each permutation. Another algorithm is also described in \cite[Algorithm 4.9]{LenartGreedyAlgo} that could be used in that purpose. Both algorithms should allow for a direct proof of the property that each permutation in the interval $\left[ \sigma, \eta \right]$ is associated with a chain of the Lenart-Postinikov enumeration of Theorem \ref{thm:LenartPostnikov}. We have shown this property algebraically in Corollary~\ref{thm:intervalClosure}. It is not enough to prove Theorem \ref{thm:interval} as one also needs to prove that each permutation of the sum is contained in the interval. This will be done in Section \ref{sec:interval}.

\subsection{Direct proof}
\label{sec:k-chains-direct-proof}

We can prove Proposition \ref{prop:ESigma} by applying the $\pi_i$ operators to the sum over an interval given in \eqref{eq:BigInterval}. First let us see:

\begin{Proposition}
\label{prop:EZeta}
\begin{equation}
\label{eq:EZeta}
\mathfrak{E}_{\zeta,k} = \sum_{\mu \geq \zeta} (-1)^{\ell(\mu) - \ell(\zeta)}K_\mu.
\end{equation}
\end{Proposition}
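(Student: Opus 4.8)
## Proof proposal for Proposition~\ref{prop:EZeta}

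The plan is to recognize that $\zeta$ is a very special permutation --- it is the maximal-length element of its coset $\sigma(\mathfrak{S}_k\times\mathfrak{S}_{n-k})$, so its two blocks $\zeta(1)>\dots>\zeta(k)$ and $\zeta(k+1)>\dots>\zeta(n)$ are each strictly decreasing. For such a $\zeta$, \emph{every} pair $(a,b)$ with $a\le k<b$ such that $\ell(\zeta(a,b))=\ell(\zeta)+1$ is a $k$-Bruhat transposition, and these are easy to enumerate: $(a,b)$ is a $k$-Bruhat transposition of $\zeta$ exactly when $\zeta(a)<\zeta(b)$ and there is no intermediate index creating an inversion obstruction --- but because each block is decreasing, the only obstructions come from the other block, and one checks directly which $(a,b)$ survive. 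So the left-hand side $\mathfrak{E}_{\zeta,k}$ is the signed sum over valid paths $\zeta\lessdot\zeta\tau_{i_1}\lessdot\cdots$ built from $W_{\zeta,k}$.

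First I would show that the support of $\mathfrak{E}_{\zeta,k}$ is exactly the principal filter $\{\mu:\mu\ge\zeta\}$. One inclusion is easy: every $K_\mu$ appearing comes from a saturated Bruhat chain starting at $\zeta$, hence $\mu\ge\zeta$. For the reverse inclusion I would argue that since $\zeta$ is maximal in its coset, \emph{any} Bruhat transposition $\tau$ with $\zeta\lessdot\zeta\tau$ must be a $k$-transposition: if $\tau=(a,b)$ with both $a,b\le k$ (or both $>k$), then $\zeta(a)>\zeta(b)$ because the block is decreasing, so $\zeta\tau<\zeta$, contradiction. Hence every cover above $\zeta$ uses a transposition from $W_{\zeta,k}$, and by induction on $\ell(\mu)-\ell(\zeta)$ every $\mu\ge\zeta$ is reached by at least one valid path. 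Combined with the "each permutation is obtained in at most one chain" fact (Lemma~\ref{lemme:equiv-lenart} together with \cite[Proposition 4.8]{Lenart}), it is reached by exactly one, so the coefficient of $K_\mu$ is $(-1)^{\ell(\mu)-\ell(\zeta)}$, which is exactly the right-hand side.

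An alternative, cleaner route --- and the one I would actually write --- is to avoid re-deriving the chain combinatorics and instead identify $\mathfrak{E}_{\zeta,k}$ with $K_\omega\hat\pi_{\omega\zeta}\pi_{\zeta^{-1}\zeta}=K_\omega\hat\pi_{\omega\zeta}$ in the special case $\sigma=\zeta$. Indeed $\zeta^{-1}\zeta=\mathrm{id}$, so $\pi_{\zeta^{-1}\sigma}$ is the identity operator and \eqref{eq:KomegaHatPi}--\eqref{eq:BigInterval} give directly $K_\omega\hat\pi_{\omega\zeta}=\sum_{\mu\ge\zeta}(-1)^{\ell(\mu)-\ell(\zeta)}K_\mu$, which is precisely the claimed right-hand side of \eqref{eq:EZeta}. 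It then remains to check the single instance $\mathfrak{E}_{\zeta,k}=K_\omega\hat\pi_{\omega\zeta}$ of Proposition~\ref{prop:ESigma}; but this is the base case of the induction that proves Proposition~\ref{prop:ESigma} in general, so it is natural to prove Proposition~\ref{prop:EZeta} first as a lemma. Concretely I would expand $\hat\pi_{\omega\zeta}$ along a reduced word corresponding to a maximal weak-order chain from $\omega$ down to $\zeta$, use $\hat K_\omega\hat\pi_i=\hat K_{\omega s_i}$ on each decreasing step, land on $\hat K_\zeta$, and then apply the change of basis \eqref{eq:HatKToK}; separately I would expand $\mathfrak{E}_{\zeta,k}=K_\zeta\prod_{\tau\in W_{\zeta,k}}(1-\tau)$ and match the two sums term by term using the enumeration of $W_{\zeta,k}$ described above.

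The main obstacle is the bookkeeping in matching $\mathfrak{E}_{\zeta,k}$ against $\sum_{\mu\ge\zeta}(-1)^{\ell(\mu)-\ell(\zeta)}K_\mu$: one must verify that the product $\prod_{\tau\in W_{\zeta,k}}(1-\tau)$, expanded over valid paths, hits every $\mu\ge\zeta$ exactly once. The cancellation-free/unique-chain statement is really the crux, and while it follows from Lemma~\ref{lemme:equiv-lenart} and \cite[Proposition 4.8]{Lenart}, if one wants a self-contained argument the cleanest is the route via $K_\omega\hat\pi_{\omega\zeta}$: there the uniqueness is automatic because $\hat\pi_{\omega\zeta}$ is a \emph{reduced} product applied to $\omega$, so no two subwords of its reduced decomposition yield the same permutation below $\zeta$ --- this is exactly the content of Proposition~\ref{def:bruhatpi}, i.e. $\hat\pi_{\omega\zeta}=\sum_{\mu\le\omega\zeta}(-1)^{\ell(\mu)-\ell(\omega\zeta)}\pi_\mu$ reindexed. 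I would therefore lean on the operator side for the existence-and-uniqueness and use the $W_{\zeta,k}$ side only to make the identification with the $\mathfrak{E}$ notation.
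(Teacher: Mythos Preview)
Your proposal correctly identifies the two halves of the argument and dispatches the easy inclusion $\mathfrak{E}_{\zeta,k}\subseteq[\zeta,\omega]$, and your observation that every cover of $\zeta$ itself must be a $k$-transposition (because both blocks of $\zeta$ are decreasing) is right. The gap is in the reverse inclusion.

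Your inductive step does not go through. From ``every cover of $\zeta$ lies in $W_{\zeta,k}$'' you cannot conclude ``by induction every $\mu\ge\zeta$ is reached by a valid path''. For $\mu$ at distance $\ge 2$ you would need, for some cover $\mu'\lessdot\mu$ with $\mu'\ge\zeta$, that the transposition $\tau$ with $\mu=\mu'\tau$ lies in $W_{\zeta,k}$ \emph{and} is $\prec$-larger than every transposition already used in the valid path to~$\mu'$. Neither is automatic. Already for $\zeta=1\,|\,32$ the cover $312\lessdot 321$ uses $(2,3)$, which is not a $k$-transposition at all; the valid path to $321$ exists, but it goes through $231$ instead. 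So the induction as stated does not produce the required path; you need a mechanism that \emph{selects} a particular saturated $k$-Bruhat chain.

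Your alternative route is circular in the place you flag: knowing $K_\omega\hat\pi_{\omega\zeta}=\sum_{\mu\ge\zeta}(-1)^{\ell(\mu)-\ell(\zeta)}K_\mu$ is exactly \eqref{eq:BigInterval} and says nothing about $\mathfrak{E}_{\zeta,k}$ until you have established $\mathfrak{E}_{\zeta,k}=K_\omega\hat\pi_{\omega\zeta}$, which is the $\sigma=\zeta$ instance of Proposition~\ref{prop:ESigma}. Since Proposition~\ref{prop:EZeta} is precisely the \emph{base case} of the induction proving Proposition~\ref{prop:ESigma}, you cannot invoke the latter here, and ``match the two sums term by term'' is a restatement of the problem, not a proof.

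What the paper does for the missing direction is different and concrete: given $\mu\ge\zeta$, it first shows $\mu\ge_k\zeta$ by checking the two conditions of Theorem~\ref{thm:KBruhatChar} directly from the antidominance of the two blocks of~$\zeta$ (using the reordered-factor criterion of Proposition~\ref{prop:BruhatComparisions}). Then it invokes the Bergeron--Sottile chain algorithm \cite[Algorithm~3.1.1]{BergeronKbruhat}, which produces a saturated $k$-Bruhat chain from $\zeta$ to $\mu$ whose transpositions are already listed in the order $\prec$; finally it checks that each transposition output by the algorithm is a Bruhat cover of $\zeta$ (again using that the blocks of $\zeta$ are decreasing), hence lies in $W_{\zeta,k}$. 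That supplies exactly the ``selection of a valid path'' your induction was missing.
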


\begin{proof}
We know that the sign of each element $K_\mu$ of the sum $\mathfrak{E}_{\zeta,k}$ is given by $(-1)^{\ell(\mu) - \ell(\zeta)}$. So we can consider $\mathfrak{E}_{\zeta,k}$ as a set and the definition gives us that $\mathfrak{E}_{\zeta,k} \subset \left[ \zeta, \omega \right] $. We only have to prove the reverse inclusion $\left[ \zeta, \omega \right] \subset \mathfrak{E}_{\zeta,k} $.

Let $\mu \in \left[ \zeta, \omega \right]$. We first prove that $\mu \geq_k \zeta$. Proposition \ref{prop:BruhatComparisions} implies that $\mu$ and $\zeta$ satisfy Condition \ref{cond:kbruhat1} of Theorem \ref{thm:KBruhatChar}: for $a \leq k$, the left factor of size $a$ of $\mu$ is greater than the left factor of size $a$ of $\zeta$ and $\zeta(1,\dots,k)$ is antidominant, so $\zeta(a) \leq \mu(a)$. For $b > k$,  one uses that the reordered right factor of size $n-b$ of $\mu$ is smaller than, or equal to,
componentwise,  the reordered right factor of size $n-b$ of $\zeta$. Condition \ref{cond:kbruhat3} of Theorem \ref{thm:KBruhatChar} is also satisfied as for $a < b$, $\zeta(a) < \zeta(b)$ implies that $a \leq k <b$. Consequently, we have $\mu \geq_k \zeta$.

By using \cite[Algorithm 3.1.1]{BergeronKbruhat}, one obtains a chain of the $k$-Bruhat order between $\mu$ and $\zeta$. The order on the transpositions is the one described in Definition \ref{def:Wsigma}, and so if each transposition given by the algorithm is in $W_{\zeta,k}$, we obtain a subword. The process is recursive, it starts from $\mu$ and chooses a transposition $(a,b)$ such that, $a \leq k <b$, $\mu(a) > \mu(b)$ and $\zeta(a) < \zeta(b)$. As we have $\zeta(i) < \zeta(a)$ for $a <i \leq k$, and $\zeta(i) > \zeta(b)$ for $k<i<b$, then $(a,b)$ is a $k$-Bruhat transposition of $\zeta$ and is in $W_{\zeta,k}$.
\end{proof}

From \eqref{eq:BigInterval} and Proposition \ref{prop:EZeta}, we now have 

\begin{equation}
\label{eq:EZetaPi}
K_\omega \hat{\pi}_{\omega \zeta} \pi_{\zeta^{-1} \sigma} = \mathfrak{E}_{\zeta,k} \pi_{\zeta^{-1} \sigma}.
\end{equation}

We can prove Proposition \ref{prop:ESigma} by induction on the length of the product $\pi_{\zeta^{-1} \sigma}$.

\begin{Proposition}
\label{prop:InductiveStep}
Let $\sigma \in \mathfrak{S}_n$, and let us assume that \eqref{eq:ESigmaResult} holds. Let $s_i$ be a simple transposition such that $i \neq k$ and $\sigma s_i < \sigma$, then

\begin{equation}
\label{eq:InductiveStep}
\mathfrak{E}_{\sigma,k} \pi_i = \mathfrak{E}_{\sigma s_i,k}.
\end{equation}

\end{Proposition}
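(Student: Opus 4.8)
The plan is to prove Proposition~\ref{prop:InductiveStep} by a careful bookkeeping argument comparing the two sets $W_{\sigma,k}$ and $W_{\sigma s_i,k}$ of $k$-Bruhat transpositions, and then tracking how the signed sum of valid paths transforms under right-multiplication by $\pi_i$. Write $\sigma' = \sigma s_i$, so $\sigma'(i) = \sigma(i+1) < \sigma(i) = \sigma'(i+1)$, i.e.\ $\sigma$ has a descent at position $i$ that $\sigma'$ does not. Since $i \neq k$, both $i$ and $i+1$ lie on the same side of $k$ (both $\leq k$ or both $>k$), so conjugating transpositions by $s_i$ preserves the property of being a $k$-transposition. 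The first step is the combinatorial comparison: the map $\tau \mapsto s_i \tau s_i$ gives a bijection between $\{(a,b) : a,b \notin \{i,i+1\}\} \cap W_{\sigma,k}$-type positions and the corresponding ones for $\sigma'$, and one checks that $\tau$ is a $k$-Bruhat transposition of $\sigma$ iff $s_i\tau s_i$ is one of $\sigma'$ — here one must handle separately the transpositions involving $i$ or $i+1$, where the descent/non-descent at $i$ matters.

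Next I would set up the action of $\pi_i$ on $\mathfrak{E}_{\sigma,k}$ expanded as a signed sum $\sum_w (-1)^{\ell(w)} K_{\sigma w}$ over valid paths $w$ (subwords of $W_{\sigma,k}$ that are saturated Bruhat chains from $\sigma$). Group the valid paths by whether the permutation $\sigma w$ has a descent at position $i$ or not: if $\sigma w$ has a descent at $i$, then $K_{\sigma w}\pi_i = K_{\sigma w s_i}$, and $\ell(\sigma w s_i) = \ell(\sigma w)-1$; if not, $K_{\sigma w}\pi_i = K_{\sigma w}$. The key cancellation-pairing: I expect the terms to organize into pairs $\{K_{\sigma w}, K_{\sigma w s_i}\}$ that telescope under $\pi_i$, leaving exactly the signed sum over valid paths from $\sigma'$. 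Concretely, I would use Lemma~\ref{lemme:coset} or the interval-closure structure (Corollary~\ref{thm:intervalClosure}) together with the description of $\mathfrak{E}_{\sigma,k}$ as the closed set from Proposition~\ref{prop:complement}: $\mathfrak{E}_{\sigma,k}$ is the set of $\nu \geq \sigma$ with $\nu \not\geq \sigma'$ for every $\sigma' \in \Succs_\sigma^{-k}$. Applying $\pi_i$ (with $s_i$ not a $k$-transposition) to such a set and showing the image is the analogous set for $\sigma s_i$ is then a Bruhat-order statement: $\nu \mapsto \nu$ if $\nu$ has no descent at $i$, $\nu \mapsto \nu s_i$ otherwise, and one checks the resulting multiset of permutations with signs $(-1)^{\ell(\nu)-\ell(\sigma s_i)}$ is exactly $\mathfrak{E}_{\sigma s_i,k}$.

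The cleanest route, which I would actually pursue, avoids re-deriving everything: since by hypothesis \eqref{eq:ESigmaResult} holds, $\mathfrak{E}_{\sigma,k} = K_\omega \hat\pi_{\omega\zeta}\pi_{\zeta^{-1}\sigma}$, and here $\zeta = \zeta(\sigma,k) = \zeta(\sigma s_i, k)$ since $\sigma$ and $\sigma s_i$ lie in the same coset $\sigma(\mathfrak{S}_k\times\mathfrak{S}_{n-k})$ (as $i\neq k$). Then $\zeta^{-1}\sigma s_i = (\zeta^{-1}\sigma)s_i$ and, because $\sigma s_i < \sigma$ with $i\neq k$, one has $\ell(\zeta^{-1}\sigma s_i) = \ell(\zeta^{-1}\sigma)+1$ — i.e.\ appending $s_i$ to a reduced word of $\zeta^{-1}\sigma$ gives a reduced word of $\zeta^{-1}\sigma s_i$ (both sides being cosets with the same maximal element $\zeta$). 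Hence
\[
\mathfrak{E}_{\sigma,k}\,\pi_i = K_\omega \hat\pi_{\omega\zeta}\pi_{\zeta^{-1}\sigma}\pi_i = K_\omega \hat\pi_{\omega\zeta}\pi_{\zeta^{-1}\sigma s_i},
\]
and it remains only to identify this with $\mathfrak{E}_{\sigma s_i,k}$, which is precisely the content of Proposition~\ref{prop:ESigma} applied to $\sigma s_i$ — so the induction closes provided the base case ($\sigma = \zeta$, handled by Proposition~\ref{prop:EZeta}) and the combinatorial bijection $W_{\sigma,k} \leftrightarrow W_{\sigma s_i,k}$ make the two descriptions of $\mathfrak{E}$ literally coincide.

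The main obstacle is the last identification step: showing that the two \emph{combinatorial} objects $\mathfrak{E}_{\sigma s_i,k}$ (signed sum over valid subwords of $W_{\sigma s_i,k}$) and $\mathfrak{E}_{\sigma,k}\pi_i$ agree term by term, rather than just as elements of the $K$-module. This requires the explicit bijection between valid paths from $\sigma$ and valid paths from $\sigma s_i$, and verifying it respects lengths and signs — in particular handling the transpositions $(i,b)$ and $(a,i+1)$ (or $(i+1,b)$, $(a,i)$) whose status as Bruhat transpositions flips. I would prove this by induction inside the argument, using Corollary~\ref{cor:linear-extension} and Lemma~\ref{prop:linear-extension} to control the relative order of the affected transpositions in $W_{\sigma,k}$, so that right-multiplication by $\pi_i$ can be pushed past the initial segment of the product $(1-\tau_1)\cdots(1-\tau_m)$ in a controlled way. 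The bookkeeping is the delicate part; the algebraic skeleton above is routine.
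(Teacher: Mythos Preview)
Your ``cleanest route'' is circular. You correctly derive $\mathfrak{E}_{\sigma,k}\pi_i = K_\omega\hat\pi_{\omega\zeta}\pi_{\zeta^{-1}\sigma s_i}$ from the hypothesis, but identifying the right-hand side with $\mathfrak{E}_{\sigma s_i,k}$ \emph{is} Proposition~\ref{prop:ESigma} for $\sigma s_i$. Since $\ell(\zeta^{-1}\sigma s_i)=\ell(\zeta^{-1}\sigma)+1$, this is precisely the case the induction is meant to establish, not something available from the hypothesis. So the algebraic skeleton is empty; the entire content is the combinatorial identification you push to the final paragraph.

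That combinatorial sketch is in the right spirit but uses the wrong dichotomy. Splitting by whether $\mu=\sigma w$ has a descent at $i$ does not directly organize the cancellation. The paper instead splits $\mathfrak{E}_{\sigma,k}$ according to whether $\mu s_i \ngtr \sigma$ or $\mu s_i > \sigma$, and proves two lemmas. First (Lemma~\ref{lemme:ESigmaPiNonZeroImage}): if $\mu s_i \ngtr \sigma$ then automatically $\mu s_i<\mu$, and the conjugated word $(s_i\tau_{i_1}s_i)\cdots(s_i\tau_{i_r}s_i)$ is a valid subword of $W_{\sigma s_i,k}$; this gives a bijection between that part of $\mathfrak{E}_{\sigma,k}$ and all of $\mathfrak{E}_{\sigma s_i,k}$. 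The nontrivial point here --- which your sketch does not address --- is showing that every $\tau$ in the chain satisfies $\sigma\tau(i)>\sigma\tau(i+1)$, so that $s_i\tau s_i$ really lands in $W_{\sigma s_i,k}$; this is exactly where the condition $\mu s_i \ngtr \sigma$ is used, via a Bruhat left-factor argument. Second (Lemma~\ref{lemme:ESigmaPiZeroImage}): if $\mu s_i>\sigma$ then, using the induction hypothesis in the form of Proposition~\ref{prop:complement}, also $\mu s_i\in\mathfrak{E}_{\sigma,k}$, so these elements pair off with opposite signs and $\pi_i$ kills the sum over them. Your reference to Proposition~\ref{prop:complement} is well placed for this second lemma, but you have not isolated the first, which carries most of the weight.
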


First, let us note the fact:

\begin{Lemme}
$\mathfrak{E}_{\sigma s_i, k} \cap \mathfrak{E}_{\sigma,k} = \emptyset$, and more precisely $ \forall \nu \in \mathfrak{E}_{\sigma s_i, k}$, $\nu \ngtr \sigma$.
\end{Lemme}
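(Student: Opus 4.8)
The plan is to unwind the definition of $\mathfrak{E}_{\sigma s_i,k}$ from \eqref{eq:ESigma}: any permutation $\nu$ contributing to it is the endpoint of a \emph{valid path} starting at $\sigma':=\sigma s_i$, that is, a saturated Bruhat chain $\sigma'=\mu^{(0)}\lessdot\mu^{(1)}\lessdot\dots\lessdot\mu^{(r)}=\nu$ in which each step multiplies on the right by a transposition of $W_{\sigma',k}$ (Definition \ref{def:Wsigma}), hence by some $(a,b)$ with $a\le k<b$, and these transpositions occur in $\prec$-increasing order. In particular $\nu\ge\sigma'$. Since $i\neq k$ I may assume $i<k$ (the case $i>k$ being the mirror image, obtained by working with right factors); then positions $i$ and $i+1$ both lie in $\lbrace 1,\dots,k\rbrace$, and writing $\alpha:=\sigma(i)=\sigma'(i+1)$ and $\beta:=\sigma(i+1)=\sigma'(i)$ we have $\beta<\alpha$ because $\sigma(i)>\sigma(i+1)$.

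The heart of the argument is a monotonicity/barrier observation about valid paths. A Bruhat transposition $(a,b)$ with $a<b$ sends the larger of the two exchanged values to the left position $a$; hence along a valid path the value sitting in a position $\le k$ never decreases and the value in a position $>k$ never increases. The refinement I want is that \emph{no value $\ge\alpha$ can ever enter one of the positions $1,\dots,i$}. Indeed, suppose a step uses $(j,b)$ with $j\le i$; then $i+1\le k<b$, so position $i+1$ lies strictly between $j$ and $b$, and by monotonicity position $i+1$ carries at that moment a value $\ge\sigma'(i+1)=\alpha$. If moreover $\sigma'(j)<\alpha$, then the defining "no intermediate value'' condition for $(j,b)$ being a Bruhat transposition of $\sigma'$, applied with $c=i+1$, forces $\sigma'(b)<\alpha$; since position $b>k$ is non-increasing, the value moved into position $j$ is $<\alpha$. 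Iterating over all steps that touch a position $\le i$ gives, for every $j\le i$, the equivalence $\nu(j)\ge\alpha\Leftrightarrow\sigma'(j)\ge\alpha$ (positions starting $\ge\alpha$ stay so by monotonicity, positions starting $<\alpha$ stay so by the barrier argument, position $i$ being covered by the latter since $\sigma'(i)=\beta<\alpha$).

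From this I conclude by the Ehresmann criterion. Because $\sigma'(i)=\beta<\alpha$, the previous equivalence yields $\#\lbrace j\le i:\nu(j)\ge\alpha\rbrace=\#\lbrace j\le i-1:\sigma'(j)\ge\alpha\rbrace$, whereas $\sigma$ agrees with $\sigma'$ on $\lbrace 1,\dots,i-1\rbrace$ and $\sigma(i)=\alpha$, so $\#\lbrace j\le i:\sigma(j)\ge\alpha\rbrace=\#\lbrace j\le i-1:\sigma'(j)\ge\alpha\rbrace+1$. Thus the left factor of size $i$ of $\nu$ has strictly fewer entries $\ge\alpha$ than that of $\sigma$, and Proposition \ref{prop:BruhatComparisions} (taken with $h=\alpha-1$ and $\ell=i$) gives $\sigma\not\le\nu$. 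This is exactly $\nu\ngtr\sigma$, the "more precisely'' statement; in particular $\nu\neq\sigma$, and since every element of $\mathfrak{E}_{\sigma,k}$ is $\ge\sigma$ (each valid path from $\sigma$ is an increasing Bruhat chain), $\nu\notin\mathfrak{E}_{\sigma,k}$, so $\mathfrak{E}_{\sigma s_i,k}\cap\mathfrak{E}_{\sigma,k}=\emptyset$.

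The step I expect to be the main obstacle is the barrier claim in the middle paragraph: one has to check, step by step along the path, that position $i+1$ keeps playing the role of an intermediate value even after being modified, and separately dispose of the positions $j<i$ that already carry a value $\ge\alpha$; the book-keeping is routine but must be carried out per transposition. For $i>k$ the symmetric version uses the right-factor form of the criterion \eqref{eq:BruhatkProjectionRight} with threshold $h=\beta$, position $i$ now acting as the barrier preventing any value $\le\beta$ from entering positions $\ge i+1$.
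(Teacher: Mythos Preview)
Your proof is correct and follows essentially the same approach as the paper: both arguments compare $\#\{j\le i:\nu(j)\ge\sigma(i)\}$ with $\#\{j\le i:\sigma(j)\ge\sigma(i)\}$ and conclude via the Ehresmann criterion (Proposition \ref{prop:BruhatComparisions}). The paper's proof simply asserts that this count is constant on $\mathfrak{E}_{\sigma s_i,k}$, whereas you supply the missing justification via your barrier argument (using that each $(j,b)\in W_{\sigma',k}$ with $j\le i$ and $\sigma'(j)<\alpha$ must satisfy $\sigma'(b)<\alpha$ because position $i+1$ blocks it); this is exactly the detail the paper elides, and your observation that the argument only uses the fixed data $\sigma'(j),\sigma'(b)$ rather than the running values makes the iteration over steps clean.
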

\begin{proof}
If $i \leq k$, let us consider $\vert p_{\sigma(i)-1}(\nu_1, \dots, \nu_i) \vert_y$ for $\nu \in \mathfrak{E}_{\sigma s_i, k}$, \textit{i.e.}, the number of values greater than or equal to $\sigma(i)$ in left factors of size $i$. This number is constant on $\mathfrak{E}_{\sigma s_i, k}$ and is equal to $\vert p_{\sigma(i)-1}(\sigma(1), \dots \sigma(i)) \vert_y\!-\!1$ as $\sigma s_i (i) = \sigma(i+1) < \sigma (i)$. And so, by Proposition \ref{prop:BruhatComparisions}, $\nu$ cannot be greater than $\sigma$. The same argument applies when $i>k$, by considering the number of values smaller than or equal to $\sigma(i+1)$ in right factors of size $n-i$. 
\end{proof}

Proposition \ref{prop:InductiveStep} is a consequence of these two lemmas:

\begin{Lemme}
\label{lemme:ESigmaPiNonZeroImage}
We have the following implications:
\begin{enumerate}
\item \label{lemme:ESigmaPiNonZeroImage-LtoR}
Let $w = \tau_{i_1} \dots \tau_{i_r}$ be a subword of $W_{\sigma, k}$ such that $w$ is a valid path starting from $\sigma$ in the Bruhat graph, and $\mu s_i \ngtr \sigma$ where $\mu = \sigma w$. Then, 
$$w' = (s_i\tau_{i_1}s_i)(s_i\tau_{i_2}s_i)\dots (s_i\tau_{i_r}s_i)$$
is a subword of $W_{\sigma s_i, k}$ and is a valid path in the Bruhat graph starting from $\sigma s_i$. 
\item \label{lemme:ESigmaPiNonZeroImage-RtoL}
Conversely, if $w'=t_1 \dots t_r$ is a subword of $W_{\sigma s_i, k}$ and a valid path starting from $\sigma s_i$ in the Bruhat graph, then 
$$w = (s_i t_1 s_i) \dots (s_i t_r s_i)$$ 
is a subword of $W_{\sigma, k}$ and a valid path in the Bruhat graph starting from $\sigma$.
\end{enumerate}
\end{Lemme}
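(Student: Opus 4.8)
The plan is to prove both implications by conjugating the transpositions of the path by $s_i$ and keeping track of the entries in positions $i$ and $i+1$ along the chain. Write $\mu^{(j)}=\sigma\tau_{i_1}\cdots\tau_{i_j}$ for the running permutations of $w$ (so $\mu^{(0)}=\sigma$, $\mu^{(r)}=\mu$). I would first record three elementary facts about $s_i$-conjugation. Since $i\neq k$, the involution $\tau=(a,b)\mapsto s_i\tau s_i=(s_i(a),s_i(b))$ maps $k$-transpositions to $k$-transpositions, and again because $i\neq k$ no $k$-transposition equals $s_i$, so each such $\tau$ meets $\{i,i+1\}$ in at most one point. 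Conjugation preserves the entries that matter, $(\sigma s_i)(s_i(c))=\sigma(c)$, so \eqref{eq:totalOrder} gives $\tau\prec_{\sigma}\tau'\Leftrightarrow s_i\tau s_i\prec_{\sigma s_i}s_i\tau's_i$; and $(\sigma s_i)\,(s_i\tau_{i_1}s_i)\cdots(s_i\tau_{i_j}s_i)=\mu^{(j)}s_i$, so the running permutations of $w'$ are the right $s_i$-translates of those of $w$. The technical tool I would isolate is: \emph{if $\tau$ is a $k$-transposition and a Bruhat transposition of a permutation $\rho$, $i\neq k$, and both $\rho$ and $\rho\tau$ have a descent at position $i$ (resp.\ both have an ascent at position $i$), then $s_i\tau s_i$ is a Bruhat transposition of $\rho s_i$.} This is a short case distinction according as $\tau$ meets $\{i,i+1\}$ in $\{i\}$, in $\{i+1\}$, or not at all; in each case the descent (resp.\ ascent) hypotheses on $\rho$ and on $\rho\tau$ are exactly what keeps the entry of position $i$ or $i+1$ from being a blocker for $s_i\tau s_i$ in $\rho s_i$.

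Next I would control the entries of positions $i$ and $i+1$ along a valid path, treating $i<k$ (the case $i>k$ is symmetric, with ``right factor'' for ``left factor'' and ``smallest'' for ``largest''). Because $\sigma(i)>\sigma(i+1)$, every transposition of $W_{\sigma,k}$ that moves position $i$ is $\prec$-smaller than every one that moves position $i+1$; and along a valid path a Bruhat transposition of the running permutation that moves position $i$ (resp.\ $i+1$) can only increase that entry. Hence the entry in position $i$ is non-decreasing and then constant, while the entry in position $i+1$ equals $\sigma(i+1)$ and is then non-decreasing, so once some permutation of the path has an ascent at $i$ every later one does too. I conclude: \emph{if the endpoint of a valid path from $\sigma$ has a descent at $i$, then every permutation of the path does}; dually, since $\sigma s_i$ has an ascent at $i$, every permutation of a valid path from $\sigma s_i$ has an ascent at $i$.

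For implication~\ref{lemme:ESigmaPiNonZeroImage-RtoL} no extra hypothesis is needed. Every running permutation $\nu^{(j)}$ of $w'$ has an ascent at $i$, and since each $t_j$ lies in $W_{\sigma s_i,k}$ — hence is a Bruhat transposition of $\sigma s_i$ — a direct check (cases on how $t_j$ meets $\{i,i+1\}$) shows that $(\sigma s_i)t_j$ also has an ascent at $i$. The conjugation tool (ascent case) then gives that each $s_it_js_i$ is a Bruhat transposition both of $(\sigma s_i)s_i=\sigma$ and of $\nu^{(j-1)}s_i$. Therefore $w=s_iw's_i$ is a saturated Bruhat chain from $\sigma$, its transpositions are $k$-transpositions and Bruhat transpositions of $\sigma$ — so they lie in $W_{\sigma,k}$ — and they occur in $\prec_\sigma$-order; that is, $w$ is a valid path from $\sigma$.

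For implication~\ref{lemme:ESigmaPiNonZeroImage-LtoR}, the hypothesis $\mu s_i\ngtr\sigma$ forces $\mu(i)>\mu(i+1)$ (otherwise $\mu s_i>\mu\geq\sigma$), so by the second paragraph every $\mu^{(j)}$ has a descent at $i$. The step I expect to be the main obstacle is to deduce from $\mu s_i\ngtr\sigma$ that $w$ uses no \emph{dangerous} transposition, i.e.\ (for $i<k$) none of the form $(i+1,b)$ with $\sigma(b)>\sigma(i)$: the claim is that such a transposition in $w$ would force $\mu s_i\geq\sigma$, and proving this requires a counting argument with the Ehresmann criterion \eqref{eq:BruhatProjection} on the left factor of size $i$ of $\mu s_i$, using that the entry of position $i$ in $\mu$ is already reached before any transposition moving position $i+1$ is applied. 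Once the dangerous transpositions are excluded, every $\sigma\tau_{i_j}$ (like every $\mu^{(j)}$) has a descent at $i$, so the conjugation tool (descent case), applied at each step and also with $\rho=\sigma$ — using Lemma~\ref{lemme:equiv-lenart} to know that each $\tau_{i_j}$ is a Bruhat transposition of $\sigma$ — shows that every $s_i\tau_{i_j}s_i$ is a Bruhat transposition of $\mu^{(j-1)}s_i$ and of $\sigma s_i$, hence a $k$-transposition lying in $W_{\sigma s_i,k}$; since these occur in $\prec_{\sigma s_i}$-order, $w'=s_iws_i$ is a valid path from $\sigma s_i$.
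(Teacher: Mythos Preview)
Your argument follows the same overall route as the paper: conjugate the chain by $s_i$, track the relative order of positions $i,i+1$ along the way, and for implication~(\ref{lemme:ESigmaPiNonZeroImage-LtoR}) rule out the ``dangerous'' transpositions $(i{+}1,b)$ with $\sigma(b)>\sigma(i)$. Two small remarks. First, you do not need Lemma~\ref{lemme:equiv-lenart}: by Definition~\ref{def:Wsigma} every element of $W_{\sigma,k}$ is already a $k$-Bruhat transposition of $\sigma$. Second, the paper organises the exclusion of dangerous transpositions differently from your sketch: rather than arguing directly that such a $\tau$ in $w$ forces $\mu s_i\geq\sigma$ by an Ehresmann count, it first establishes the stronger intermediate claim that \emph{every} running permutation $\tilde\mu$ of the chain satisfies $\tilde\mu s_i\ngtr\sigma$ (an easy consequence of $\tilde\mu\leq_k\mu$), applies this at the step $\nu\to\nu'=\nu(i{+}1,b)$ to get $\nu(b)<\sigma(i)$, and then uses the order $\prec$ (via Corollary~\ref{cor:linear-extension}) to contradict $\sigma(b)>\sigma(i)$. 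Your contrapositive formulation amounts to the same thing, but the bare counting on the left factor of size $i$ is not quite enough by itself --- the value at position $b$ may have been lowered by earlier steps $(c,b)$ --- so you will end up needing exactly the order argument the paper uses.
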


\begin{Lemme}
\label{lemme:ESigmaPiZeroImage}
Let $\mu \in \mathfrak{E}_{\sigma, k}$ with $\mu s_i > \sigma$, then $\mu s_i \in \mathfrak{E}_{\sigma,k}$.
\end{Lemme}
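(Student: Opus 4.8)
\textbf{Proof plan for Lemma~\ref{lemme:ESigmaPiZeroImage}.}
The plan is to exhibit, for a given $\mu \in \mathfrak{E}_{\sigma,k}$ with $\mu s_i > \sigma$, an explicit valid path in $W_{\sigma,k}$ ending at $\mu s_i$, which is built from the valid path ending at $\mu$ by appending the transposition $\tau = (i,i{+}1)$ or by inserting an appropriate $k$-transposition. First I would fix a subword $w = \tau_{i_1} \prec \dots \prec \tau_{i_r}$ of $W_{\sigma,k}$ that is a valid path from $\sigma$ to $\mu = \sigma w$. Since $i \neq k$ (the product $\pi_{\zeta^{-1}\sigma}$ contains no $s_k$), the transposition $s_i = (i,i{+}1)$ is \emph{not} itself a $k$-transposition, so one cannot simply append it; instead one must track which $k$-transposition of $\sigma$ becomes $s_i$ after conjugation. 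The key observation is that $\mu s_i > \sigma$ together with $\mu \geq \sigma$ forces $\mu(i) < \mu(i{+}1)$ (so that $\mu s_i$ is a genuine Bruhat successor of $\mu$) while $\sigma(i) > \sigma(i{+}1)$; hence somewhere along the path $w$ the relative order of the positions $i$ and $i{+}1$ (or rather of the values sitting there) must have been changed by one of the $\tau_{i_j}$.

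The main step is a careful case analysis, parallel to the proof of Lemma~\ref{lemme:ESigmaPiNonZeroImage}, on which of the positions $i,i{+}1$ is $\leq k$ and which is $>k$ (the case $i<k$ and the case $i>k$ are symmetric; the boundary subtlety is that $s_i$ straddles $k$ only when $i=k$, which is excluded). In each case I would identify the unique $k$-transposition $\theta$ of $\mu$ of the form $(i,b)$ or $(a,i{+}1)$ with $\mu\theta = \mu s_i$ realizing the Bruhat cover $\mu \lessdot \mu s_i$, and then show that the conjugate $s_i \theta s_i$ — equivalently the corresponding $k$-transposition read back at $\sigma$ — either already lies in $w$ with the wrong "side'' (contradicting $\mu s_i > \sigma$ via Proposition~\ref{prop:BruhatComparisions}, as in the lemma just before Proposition~\ref{prop:InductiveStep}) or can be appended to $w$ in the correct $\prec$-position to give a longer valid path in $W_{\sigma,k}$. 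Corollary~\ref{cor:linear-extension} and Lemma~\ref{prop:linear-extension} are what guarantee that this new transposition can legitimately be placed at the end (or at the correct spot) of the $\prec$-ordered word without disturbing validity, since non-comparable $k$-transpositions commute.

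To see that the resulting word is still a \emph{valid} path — i.e.\ that each intermediate product is a genuine Bruhat cover — I would use Definition~\ref{def:bruhatRecurs} together with the Ehresmann criterion (Proposition~\ref{prop:BruhatComparisions}): since $\mu \leq \mu s_i$ is a cover and the transpositions earlier in $w$ already form a valid path, one only needs to check that inserting $\theta$ does not create a length jump, which follows from the chosen minimality of $\theta$ among $k$-transpositions $(i,\cdot)$ or $(\cdot,i{+}1)$ of $\mu$. Finally I would note that $\mu s_i$ obtained this way indeed lies in $\mathfrak{E}_{\sigma,k}$ with sign $(-1)^{\ell(\mu s_i)-\ell(\sigma)}$, as required, and that the map $\mu \mapsto \mu s_i$ is the claimed inclusion $\{\mu \in \mathfrak{E}_{\sigma,k} : \mu s_i > \sigma\} \hookrightarrow \mathfrak{E}_{\sigma,k}$.

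\textbf{Main obstacle.} The hard part will be the bookkeeping in the case analysis: showing that the $k$-transposition $\theta$ with $\mu\theta=\mu s_i$ is unique and that its insertion into the $\prec$-chain $w$ lands in a position compatible with $\prec$ and keeps every partial product a Bruhat cover. This is where the partial order $\ltri$ (Definition~\ref{def:partial-order}), its linear extension property (Lemma~\ref{prop:linear-extension}), and Corollary~\ref{cor:linear-extension} do the real work, and where one must be most careful that the exclusion $i \neq k$ is used — it is precisely what prevents $s_i$ from being a $k$-transposition and forces the "conjugation'' argument rather than a naive appending.
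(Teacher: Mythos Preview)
Your approach is genuinely different from the paper's, and it contains a fatal error. The paper does \emph{not} attempt to build a chain from $\sigma$ to $\mu s_i$ at all. Instead it invokes Proposition~\ref{prop:complement} (valid at this stage by the induction hypothesis in the proof of Proposition~\ref{prop:InductiveStep}): membership in $\mathfrak{E}_{\sigma,k}$ is equivalent to $\nu\geq\sigma$ and $\nu\ngeq\sigma'$ for every $\sigma'\in\Succs_\sigma^{-k}$. The case $\mu s_i<\mu$ is then immediate, and for $\mu s_i>\mu$ the paper assumes some $\sigma'=\sigma(a,b)\in\Succs_\sigma^{-k}$ with $\mu s_i\geq\sigma'$, shows $a\leq i<b$ by comparing the factor of size $i$, and derives a contradiction with Proposition~\ref{prop:BruhatComparisions} via two equalities of the form $\{j\leq a:\cdot\,(j)<\sigma(b)\}$ relating $\sigma,\mu,\mu s_i$. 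No chain construction, no insertion of transpositions.

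Your plan breaks at the central step. You write that you will ``identify the unique $k$-transposition $\theta$ of $\mu$ of the form $(i,b)$ or $(a,i{+}1)$ with $\mu\theta=\mu s_i$''. But the only transposition $\theta$ with $\mu\theta=\mu s_i$ is $\theta=s_i=(i,i{+}1)$ itself, and since $i\neq k$ this is never a $k$-transposition. So no such $\theta$ exists, and the rest of the argument (inserting $\theta$ into $w$, checking $\prec$-compatibility, etc.) has nothing to act on. More generally, going from $\mu$ to $\mu s_i$ inside $\mathfrak{E}_{\sigma,k}$ really does require a \emph{different} word of $k$-transpositions, not $w$ with one letter appended: if $w'=u\theta v$ and $w=uv$ are both subwords of $W_{\sigma,k}$ with $\sigma w'=\mu s_i$, then $\theta=v s_i v^{-1}$, which depends on the tail $v$ and is not of the simple shape you describe. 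Carrying this through would amount to redoing the chain-uniqueness machinery of Section~\ref{sec:k-chains} in a case-by-case way, which is exactly what the complement characterisation of Proposition~\ref{prop:complement} lets the paper avoid.

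A secondary issue: you assert that $\mu s_i>\sigma$ together with $\mu\geq\sigma$ ``forces $\mu(i)<\mu(i{+}1)$''. It does not; the hypothesis is $\mu s_i>\sigma$, not $\mu s_i>\mu$, and both relative orders of $\mu(i),\mu(i{+}1)$ occur. The paper handles $\mu>\mu s_i$ in one line (interval closure) and only works for the contradiction in the case $\mu<\mu s_i$.
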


\begin{proof}[Proof of Lemma \ref{lemme:ESigmaPiNonZeroImage}]
Implication (\ref{lemme:ESigmaPiNonZeroImage-RtoL}) is trivial: it is immediate to see that if a transposition $t$ is a $k$-Bruhat transposition for $\sigma s_i$, then $s_i t s_i$ is a $k$-Bruhat transposition for $\sigma$. Besides, the order on transpositions is not changed, and so if $t_1 \dots t_r$ is a subword of $W_{\sigma s_i, k}$ then $(s_i t_1 s_i) \dots (s_i t_r s_i)$ is a subword $W_{\sigma, k}$.

Now, let $w = \tau_{i_1} \dots \tau_{i_r}$ be a valid subword of $W_{\sigma, k}$ with $\mu = \sigma w$ such that $\mu s_i \ngtr \sigma$. For the purpose of the proof, we suppose that $i<k$. We mostly use Proposition \ref{prop:BruhatComparisions}, the same proof can be made for $i>k$ by comparing right factors instead of left factors.

We first prove that for all permutations $\tilde{\mu}$ that appear in the chain between $\sigma$ and $\mu$ given by $w$, we have $\tilde{\mu}s_i \ngtr \sigma$. As $\mu s_i \ngtr \sigma$, there is at least one left factor of $\mu s_i$ that is not greater than the corresponding factor of $\sigma$. However, all left factors of $\mu$ are greater than the factors of $\sigma$. The only factor that is different between $\mu$ and $\mu s_i$ is $\lbrace 1, \dots, i \rbrace$ and so we have

\begin{align}
&\mu s_i (1, \dots, i) \ngeq \sigma(1, \dots, i)\text{~ with }\\
&\mu s_i (1, \dots, i) = \lbrace \mu(1), \dots, \mu(i-1), \mu(i+1) \rbrace.
\end{align}
As $\tilde{\mu} <_k \mu$, we know that $\tilde{\mu}(j) \leq \mu(j)$ for all $j\leq k$ and so $\tilde{\mu} s_i (1, \dots, i) \ngeq \sigma(1, \dots, i)$. 

We can now prove that for all transpositions $\tau$ of $w$, we have $\sigma \tau (i) > \sigma \tau (i+1)$. We know that $\sigma(i) > \sigma(i+1)$ and $i<k$, so the only transposition we have to consider is $\tau = (i+1,b)$. The transposition $\tau$ is in $w$, let us say between two permutations $\nu$ and $\nu'$. We know that $\nu's_i \ngtr \sigma$ which gives $ \lbrace \nu'(1), \dots, \nu'(i-1), \nu'(i+1) \rbrace \ngeq \lbrace \sigma(1), \dots, \sigma(i) \rbrace$. As $\nu' >_k \sigma$, we have $\nu'(j) \geq \sigma(j)$ for $j \leq i-1$ and necessarily, $\nu(b) = \nu'(i+1) < \sigma(i)$.

Let us suppose that $\sigma(b) > \sigma(i)$. Then, there is a transposition $(c,b)$ in $w$ such that $\sigma(c) < \sigma(i) < \sigma(b)$ which implies $c>i$. But as $(c,b) \prec (i+1,b)$, we have $\sigma(i+1) < \sigma(c) < \sigma(b)$ which implies $c<i+1$ and leads to a contradiction. So we have $\sigma(b) < \sigma(i)$, \emph{i.e.}, $\sigma\tau(i) > \sigma \tau (i+1)$. 

It is now easy to check that for each transposition $\tau$ of $w$, $s_i \tau s_i$ is a valid Bruhat transposition for $\sigma s_i$ and so $(s_i \tau_{i_1} s_i) \dots (s_i \tau_{i_r} s_i)$ is a subword of $W_{\sigma s_i, k}$. The fact that this is a valid path comes from the already given property that if $\nu$ and $\nu' = \nu \tau$ are two permutations of the chain, we have $\nu'(i+1) < \nu(i)$. This makes $s_i \tau s_i$ a valid Bruhat transposition for $\nu s_i$.

\end{proof}

\begin{proof}[Proof of Lemma \ref{lemme:ESigmaPiZeroImage}]
We use Proposition \ref{prop:complement} which is true on $\mathfrak{E}_{\sigma, k}$ by our induction hypothesis. We show that if $\mu \in \mathfrak{E}_{\sigma, k}$, there is no $\sigma' \in Succs_\sigma^{-k}$ such that $\sigma' > \mu s_i$, which means $\mu s_i \in \mathfrak{E}_\sigma$. This is trivial for $\mu > \mu s_i$. 

Let us suppose $\mu < \mu s_i$ and $\sigma' \in Succs_\sigma^{-k}$ such that $\sigma'> \mu s_i$. As for Lemma \ref{lemme:ESigmaPiNonZeroImage}, we set $i<k$ without loss of generality. We have $\sigma' = \sigma \tau$ with $\tau = (a,b)$ a non-$k$ Bruhat transposition. 

Firstly we show that we necessarily have $a \leq i < b$. All left factors of $\mu s_i$ are greater than the left factors of $\sigma'$. There is at least one left factor of $\mu$ that is not greater than the same one of $\sigma'$ and the only one possible is $\lbrace 1, \dots, i \rbrace$. Besides, we have $\mu(1, \dots, i) \geq \sigma(1, \dots, i)$ and so $\sigma(1, \dots, i) \neq \sigma'(1, \dots, i)$.

Now we have:

\begin{equation}
\label{eq:proof-ESigmaPiZeroImage-eg1}
 \lbrace j \leq a~;~\sigma(j) < \sigma(b) \rbrace = \lbrace j \leq a~;~\mu(j) < \sigma(b) \rbrace.
\end{equation}

As $\mu(j) \geq \sigma(j)$, it is clear that we have an inclusion. If it is a strict one, then there is $c \leq a$ with $\sigma(c) < \sigma(b)$ and $\mu(c) \geq \sigma(b)$. This means that a transposition $(c,d)$ has been applied with $c < b < d$ and $\sigma(c) < \sigma(b) < \sigma(d)$ which is not possible. 

We also have:

\begin{equation}
\label{eq:proof-ESigmaPiZeroImage-eg2}
 \lbrace j \leq a~;~\mu(j) < \sigma(b) \rbrace = \lbrace j \leq a~;~\mu s_i(j) < \sigma(b) \rbrace.
\end{equation}

If $a<i$, we have $ \lbrace j \leq a~;~ \mu(j) \rbrace = \lbrace j \leq a~;~ \mu s_i(j) \rbrace$ and \eqref{eq:proof-ESigmaPiZeroImage-eg2} is true. If $a=i$, we have $\sigma(i+1) < \sigma(i) = \sigma(a) < \sigma(b)$ so $\mu(i) < \sigma(b)$ because of \eqref{eq:proof-ESigmaPiZeroImage-eg1}. And, as $b > i+1$, the argument used to prove \eqref{eq:proof-ESigmaPiZeroImage-eg1} says that $\mu s_i (i) = \mu (i+1) < \sigma(b)$ which proves \eqref{eq:proof-ESigmaPiZeroImage-eg2}. So now we have:
\begin{align}
\# \lbrace j \leq a~;~\mu s_i(j) < \sigma(b) \rbrace &= \#  \lbrace j \leq a~;~\sigma(j) < \sigma(b) \rbrace \\
&= \#  \lbrace j \leq a~;~\sigma'(j) < \sigma(b) \rbrace + 1,
\end{align}
which by Proposition \ref{prop:BruhatComparisions} says that $\mu s_i \ngtr \sigma'$ and leads to a contradiction.

\end{proof}

\begin{proof}[Proof of Proposition \ref{prop:InductiveStep}]
One consequence of Lemma \ref{lemme:ESigmaPiNonZeroImage} is that 

\begin{equation}
\mathfrak{E}_{\sigma s_i, k} = \left( \sum_{ \substack{\mu \in \mathfrak{E}_{\sigma,k} \\ \mu s_i \ngtr \sigma}} (-1)^{\ell(\mu) - \ell(\sigma)} K_\mu \right) \pi_i. 
\end{equation}

Indeed, all elements $K_\mu$ in the above sum are sorted by the operator $\pi_i$ because $\mu s_i \ngtr \sigma$ implies that $\mu s_i < \mu$. Implication 
(\ref{lemme:ESigmaPiNonZeroImage-LtoR})
 of Lemma \ref{lemme:ESigmaPiNonZeroImage} tells us they are sent on elements of $\mathfrak{E}_{\sigma s_i, k}$ and implication 
(\ref{lemme:ESigmaPiNonZeroImage-RtoL}) tells us that they cover the whole sum. Now, we have:

\begin{equation}
\mathfrak{E}_{\sigma,k} \pi_i = \mathfrak{E}_{\sigma s_i, k} + \sum_{\substack{\mu \in \mathfrak{E}_{\sigma,k} \\ \mu s_i > \sigma}} (-1)^{ \ell(\mu) - \ell(\sigma)} K_\mu \pi_i.
\end{equation} 
By Lemma \ref{lemme:ESigmaPiZeroImage}, the second part of the sum is equal to 0, as for each element $K_\mu$, its paired element $K_{\mu s_i}$ also appears in the sum with an opposite sign. Let us assume that $\mu s_i < \mu$, then by definition of $\pi_i$, $K_\mu \pi_i = K_{\mu s_i}$ and $K_{\mu s_i}\pi_i = K_{\mu s_i}$.
\end{proof}

To illustrate this proof, one can draw the following example (Fig.~\ref{fig:ESigmaPi_i}). Each elements of $\mathfrak{E}_{\sigma, k}$ (on the left) is either paired by $s_i$ to an element of $\mathfrak{E}_{\sigma s_i, k}$ (on the right) or to another element of $\mathfrak{E}_{\sigma, k}$.

\begin{figure}[ht]
\centering
\begin{tikzpicture}

\node(K136245) at (11,1) {$+K_{1362 \vert 45}$};

\node(K146235) at (10,0) {$-K_{1462 \vert 35}$};
\node(K136425) at (12,0) {$-K_{1364 \vert 25}$};

\node(K146325) at (11,-1) {$+K_{1463 \vert 25}$};

\node(K136254) at (4,0){$+K_{1362 \vert 54}$};

\node(K146253) at (2,-1){$- K_{1462 \vert 53}$};
\node(K156234) at (4,-1){$- K_{1562 \vert 34}$};
\node(K136452) at (6,-1){$- K_{1364 \vert 52}$};
\node(K136524) at (8,-1){$- K_{1365 \vert 24}$};

\node(K156243) at (0,-2){$+ K_{1562 \vert 43}$};
\node(K146352) at (2,-2){$+ K_{1463 \vert 52}$};
\node(K146523) at (4,-2){$+ K_{1465 \vert 23}$};
\node(K156324) at (6,-2){$+ K_{1563 \vert 24}$};
\node(K136542) at (8,-2){$+ K_{1365 \vert 42}$};

\node(K156342) at (0,-3){$- K_{1563 \vert 42}$};
\node(K156423) at (2,-3){$- K_{1564 \vert 23}$};
\node(K146532) at (4,-3){$- K_{1465 \vert 32}$};

\node(K156432) at (4,-4){$+ K_{1564 \vert 32}$};

\draw[color=Bleu] (K136254) -- (K146253);
\draw[color=Rouge] (K136254) --  (K156234);
\draw[color=Vert] (K136254) --  (K136452);
\draw[color=Orange] (K136254) --  (K136524);

\draw[color=Rouge] (K146253) --  (K156243);
\draw[color=Vert] (K146253) --  (K146352);
\draw[color=Orange] (K146253) --  (K146523);

\draw[color=Orange] (K156234) --  (K156324);

\draw[color=Orange] (K136452) --  (K136542);

\draw[color=Vert] (K156243) --  (K156342);
\draw[color=Orange] (K156243) --  (K156423);

\draw[color=Orange] (K146352) --  (K146532);

\draw[color=Orange] (K156342) --  (K156432);

\draw[color=Bleu] (K136245) -- (K146235);
\draw[color=Vert] (K136245) -- (K136425);
\draw[color=Vert] (K146235) -- (K146325);

\draw[Coupled] (K136254) -- (K136245);
\draw[Coupled] (K146253) -- (K146235);
\draw[Coupled] (K136452) -- (K136425);
\draw[Coupled] (K146352) -- (K146325);

\draw[Coupled] (K156234) -- (K156243);
\draw[Coupled] (K136524) -- (K136542);
\draw[Coupled] (K146523) -- (K146532);
\draw[Coupled] (K156324) -- (K156342);
\draw[Coupled] (K156423) -- (K156432);

\end{tikzpicture}
\caption{$\mathfrak{E}_{1362 \vert 54} \pi_5 = \mathfrak{E}_{1362 \vert 45}$}
\label{fig:ESigmaPi_i}
\end{figure}
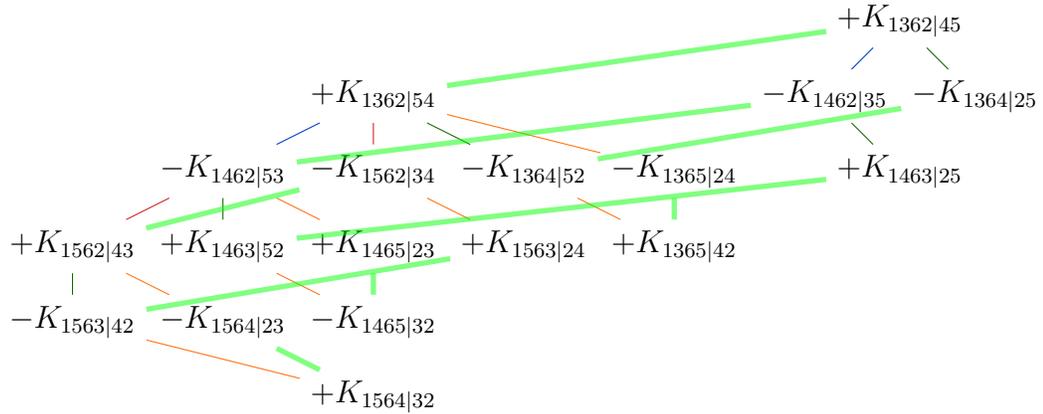

\begin{figure}[ht]
\centering
\begin{tikzpicture}

\node(K136245) at (10,1) {$+K_{1362 \vert 45}$};

\node(K146235) at (9,0) {$-K_{1462 \vert 35}$};
\node(K136425) at (11,0) {$-K_{1364 \vert 25}$};

\node(K146325) at (10,-1) {$+K_{1463 \vert 25}$};

\node(K136254) at (4,0){$+K_{1362 \vert 54}$};

\node(K146253) at (2,-1){$- K_{1462 \vert 53}$};
\node(K156234) at (4,-1){$- K_{1562 \vert 34}$};
\node(K136452) at (6,-1){$- K_{1364 \vert 52}$};
\node(K136524) at (8,-1){$- K_{1365 \vert 24}$};

\node(K156243) at (0,-2){$+ K_{1562 \vert 43}$};
\node(K146352) at (2,-2){$+ K_{1463 \vert 52}$};
\node(K146523) at (4,-2){$+ K_{1465 \vert 23}$};
\node(K156324) at (6,-2){$+ K_{1563 \vert 24}$};
\node(K136542) at (8,-2){$+ K_{1365 \vert 42}$};

\node(K156342) at (0,-3){$- K_{1563 \vert 42}$};
\node(K156423) at (2,-3){$- K_{1564 \vert 23}$};
\node(K146532) at (4,-3){$- K_{1465 \vert 32}$};

\node(K156432) at (4,-4){$+ K_{1564 \vert 32}$};

\draw[color=Bleu, line width = 2] (K136254) -- (K146253);
\draw[color=Rouge] (K136254) -- node{$\times$}  (K156234);
\draw[color=Vert, line width = 2] (K136254) --  (K136452);
\draw[color=Orange] (K136254) -- node{$\times$} (K136524);

\draw[color=Rouge] (K146253) -- node{$\times$}  (K156243);
\draw[color=Vert, line width = 2] (K146253) --  (K146352);
\draw[color=Orange] (K146253) -- node{$\times$} (K146523);

\draw[color=Orange] (K156234) -- node{$\times$} (K156324);

\draw[color=Orange] (K136452) -- node{$\times$} (K136542);

\draw[color=Vert] (K156243) --  (K156342);
\draw[color=Orange] (K156243) -- node{$\times$} (K156423);

\draw[color=Orange] (K146352) -- node{$\times$} (K146532);

\draw[color=Orange] (K156342) -- node{$\times$} (K156432);

\draw[color=Bleu, line width = 2] (K136245) -- (K146235);
\draw[color=Vert, line width = 2] (K136245) -- (K136425);
\draw[color=Vert, line width = 2] (K146235) -- (K146325);

\end{tikzpicture}
\caption{Cutting branches}
\label{fig:ESigmaCutTree}
\end{figure}
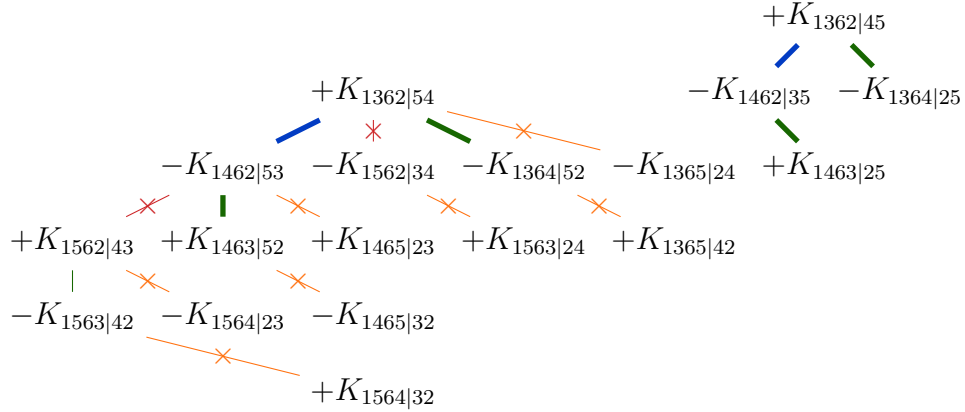

Also, $\mathfrak{E}_{\sigma s_i, k}$ is actually a conjugate subtree of $\mathfrak{E}_{\sigma, k}$. The transpositions of $W_{\sigma, k} = (\tau_1, \dots \tau_m)$ are the root branches of the tree. Proposition \ref{lemme:ESigmaPiNonZeroImage} tells us that $W_{\sigma s_i, k}$ is a conjugate subword of $W_{\sigma, k}$: only transpositions $\tau$ where $s_i\tau s_i$ is still a Bruhat transposition of $\sigma s_i$ are kept. The operator $\pi_i$ "cuts" branches out of the $\mathfrak{E}_\sigma$ tree and reduces its size exponentially (Fig. \ref{fig:ESigmaCutTree}).

\section{Interval structure of $\mathfrak{E}_{\sigma, k}$}
\label{sec:interval}

By Corollary \ref{thm:intervalClosure}, we have that $\mathfrak{E}_{\sigma, k}$ is closed by interval, \emph{i.e.}, if $ \mu, \nu \in \mathfrak{E}_{\sigma, k}$ with $\nu < \mu$, then $[\nu, \mu] \subset \mathfrak{E}_{\sigma, k}$. To prove that $ \mathfrak{E}_{\sigma, k}$ is an interval, we now have to show that there is a unique minimal element and a unique maximal element. The minimal element is by definition $\sigma$ and it is unique. The maximal element can only be $\sigma W_{\sigma, k}$, \emph{i.e.}, the permutation where all transpositions of $W_{\sigma, k}$ have been applied. It is the only element with maximal length $\ell(\sigma) + \vert W_{\sigma, k} \vert$. So the proof of Theorem \ref{thm:interval} will be completed if we prove the following Lemma:

\begin{Lemme}
\label{lemme:intervalMax}
\begin{equation}
\label{eq:intervalMax}
\forall \mu \in \mathfrak{E}_{\sigma, k},~~ \mu \leq \eta(\sigma, k)
\end{equation}
where $\eta(\sigma, k) = \sigma W_{\sigma, k}$.
\end{Lemme}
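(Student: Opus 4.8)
The plan is to show that every $\mu \in \mathfrak{E}_{\sigma,k}$ is a left factor, in the weak order sense, of $\eta = \sigma W_{\sigma,k}$ along a valid path, and then to compare componentwise using the Bruhat characterizations of Propositions~\ref{prop:BruhatComparisions}. Concretely, suppose $\mu = \sigma\tau_{i_1}\cdots\tau_{i_r}$ where $\tau_{i_1}\prec\cdots\prec\tau_{i_r}$ is a valid subword of $W_{\sigma,k} = (\tau_1\prec\cdots\prec\tau_m)$. I want to compare $\mu$ with $\eta = \sigma\tau_1\cdots\tau_m$. Because $\mu, \eta \in \sigma(\mathfrak{S}_k\times\mathfrak{S}_{n-k})$ (each $k$-transposition only permutes $\{\sigma_1,\dots,\sigma_k\}$ among themselves and likewise on the right block), it suffices to prove the Bruhat inequality \emph{within} this coset: for every threshold $h$ and every $\ell \le k$ one needs $\#\{y \in p_h(\mu_1,\dots,\mu_\ell)\} \le \#\{y \in p_h(\eta_1,\dots,\eta_\ell)\}$, and the symmetric statement on right factors of the second block, as in Proposition~\ref{prop:BruhatComparisions} (or rather the $k$-symmetric version). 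Since $\mu \ge \sigma$ and $\eta \ge \sigma$ are already known (both lie in $\mathfrak{E}_{\sigma,k}$), the real content is bounding $\mu$ from above by $\eta$.

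The key observation to exploit is the structure of $W_{\sigma,k}$ together with the order $\prec$ and the partial order $\ltri$ (Definition~\ref{def:partial-order}, Lemma~\ref{prop:linear-extension}, Corollary~\ref{cor:linear-extension}). I would argue that $\eta$ is obtained from $\sigma$ by, for each column position $a \le k$, moving the value $\sigma(a)$ as far up (in value) as the available $k$-transpositions allow, and symmetrically pushing the right-block values down. More precisely: fix $h$ with $1 \le h < n$ and $\ell \le k$; the quantity $\#\{j \le \ell : \mu(j) > h\}$ counts how many ``large'' values sit in the first $\ell$ positions. A valid path from $\sigma$ can only increase such a count by applying transpositions $(a,b)$ with $a \le \ell < b$ that bring a value $>h$ into position $a\le \ell$ while $\sigma(a) \le h$; each such swap is a $k$-Bruhat transposition of $\sigma$ hence belongs to $W_{\sigma,k}$. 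The claim is that the maximal chain (applying \emph{all} of $W_{\sigma,k}$ in the order $\prec$) performs every such beneficial swap that any subword could, so the count for $\eta$ dominates. The compatibility of $\prec$ with $\ltri$ (via Corollary~\ref{cor:linear-extension}: if $\tau=(a,d)$, $\theta=(a,c)$ and $\tau\prec\theta$ then $c<d$; if $\tau=(a,c)$, $\theta=(b,c)$ and $\tau\prec\theta$ then $a<b$) is exactly what guarantees that applying transpositions in this order does not ``destroy'' an earlier gain — once a large value has been installed in position $a$, subsequent transpositions affecting column $a$ only replace it by an even larger one, and transpositions affecting its row index only come later.

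I would structure the argument as an explicit analysis: for a fixed $h$, let $L_h = \{a : \sigma(a) \le h\}$ and track, as the transpositions of $W_{\sigma,k}$ are applied one by one in $\prec$-order, the set of positions $a \le \ell$ holding a value $> h$. Using the Bruhat-transposition condition and Corollary~\ref{cor:linear-extension}, show this set only grows, and that at the end it is maximal among all valid subwords — because a valid subword can be reordered to a subword of $W_{\sigma,k}$ in $\prec$-order (non-comparable $k$-transpositions commute, Proposition~\ref{prop:wsigma-generalisation}), and then each of its transpositions, applied in the same order, contributes at most what the corresponding transposition in the full word contributes. Running the same bookkeeping on right factors of the second block (using the symmetric clauses of $\ltri$ and of Proposition~\ref{prop:BruhatComparisions}) gives inequality~\eqref{eq:BruhatkProjectionRight}, and the two together yield $\mu \le \eta$ by the $k$-symmetric Bruhat criterion.

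The main obstacle I anticipate is making the ``monotonicity along the maximal chain'' claim airtight: one must verify that when transpositions are applied in $\prec$-order, no transposition $(a,b)$ ever \emph{removes} a value $>h$ from some position $a' \le \ell$ that a competing shorter path would have kept, and conversely that every gain a shorter path achieves is also achieved by the full word. This requires a careful case split on whether the ``competing'' transposition shares a row index or a column index with the ones already applied, and careful use of the fact that all transpositions in question are genuinely $k$-Bruhat transpositions \emph{of $\sigma$} (Lemma~\ref{lemme:equiv-lenart}), so their validity is controlled by the pattern of $\sigma$ alone rather than by intermediate permutations. The cleanest route may be to first prove a structural description of $\eta = \sigma W_{\sigma,k}$ directly — e.g.\ identifying $\eta(a)$ for $a \le k$ as the largest value $v \in \{\sigma_1,\dots,\sigma_k\}$ such that the positions still ``need'' to be filled in decreasing order, matching the greedy construction of the maximal coset element $\tilde\nu$ from the proof of Lemma~\ref{lemme:coset} — and then deduce the inequality for arbitrary $\mu$ from that closed form.
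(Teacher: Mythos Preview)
Your proposal contains a factual error and a genuine gap.

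\textbf{The factual error.} You assert that $\mu,\eta \in \sigma(\mathfrak{S}_k \times \mathfrak{S}_{n-k})$ because ``each $k$-transposition only permutes $\{\sigma_1,\dots,\sigma_k\}$ among themselves''. This is false: a $k$-transposition $(a,b)$ has $a \le k < b$, so it swaps a value in the left block with one in the right block. Elements of $\mathfrak{E}_{\sigma,k}$ are $k$-Bruhat--greater than $\sigma$ and (apart from $\sigma$ itself) are \emph{not} in the coset $\sigma(\mathfrak{S}_k\times\mathfrak{S}_{n-k})$. This does not destroy your use of the $k$-symmetric Bruhat criterion (left factors up to $k$, right factors beyond $k$), but the justification you give for that reduction is wrong.

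\textbf{The gap.} Your core claim is that the count $\#\{j\le \ell:\nu(j)>h\}$, tracked along the full chain $\sigma\to\sigma\tau_1\to\cdots\to\eta$, dominates the same count along any valid subword. You yourself flag this as the main obstacle, and it is a real one: Bruhat order is not preserved under right multiplication by a transposition, so knowing $\nu\le\nu'$ and applying the same $\tau$ to both does not give $\nu\tau\le\nu'\tau$. Your suggestion that ``each of its transpositions contributes at most what the corresponding transposition in the full word contributes'' has no proof; intermediate permutations along the full word and along the subword differ, and the effect of $\tau_j$ on the count depends on those intermediate values, not just on $\sigma$. The alternative you propose at the end (a closed form for $\eta$ via a greedy description) is not carried out either, and would still require a comparison with an arbitrary $\mu$ that you have not supplied.

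\textbf{How the paper proceeds instead.} The paper avoids any direct comparison between $\mu$ and $\eta$. It first proves a combinatorial characterization (Proposition~\ref{prop:validSubwords}): a subword $w$ of $W_{\sigma,k}$ is a valid Bruhat path from $\sigma$ if and only if it is \emph{compatible}, meaning it never contains a pattern $(a,c)\prec(b,d)$ with $a<b<c<d$ while omitting $(a,d)\in W_{\sigma,k}$. With this in hand, Lemma~\ref{lemme:intervalMax} is proved by a one-step extension argument: given $\mu=\sigma w\neq\eta$, take the $\prec$-minimal $\tau\in W_{\sigma,k}\setminus w$ and insert it into $w$ at its correct position to form $\tilde w$. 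Because $\tau$ is minimal, the prefix of $\tilde w$ up to $\tau$ is a left factor of $W_{\sigma,k}$, so any incompatibility of $\tilde w$ would already be an incompatibility of $w$. Hence $\tilde w$ is valid, $\tilde\mu=\sigma\tilde w$ lies in $\mathfrak{E}_{\sigma,k}$, and $\ell(\tilde\mu)=\ell(\mu)+1$. Iterating reaches $\eta$. This bypasses entirely the monotonicity bookkeeping you attempt.
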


To prove Lemma \ref{lemme:intervalMax}, we need to study more precisely the structure of $\mathfrak{E}_{\sigma, k}$. 

\begin{Definition}
\label{def:compatible}
Seeing $W_{\sigma, k} = (\tau_1 \prec \dots \prec \tau_m)$ as a word on transpositions, let $w = \tau_{i_1} \dots \tau_{i_r}$ be a subword of $W_{\sigma, k}$. We say that $w$ is not compatible if:
\begin{enumerate}
\item $w$ contains $(a,c)(b,d)$ as a subword with $a<b<c<d$,
\label{enum:compatible1}
\item $(a,d) \in W_{\sigma, k}$ and $(a,d) \notin  w$.
\label{enum:compatible2}
\end{enumerate}
Otherwise, $w$ is called compatible.
\end{Definition}

Note that if $w$ contains a subword $(a,c) \prec (b,d)$ as in condition (\ref{enum:compatible1}), then if there is a transposition$(a,d) \in W_{\sigma, k}$, we always have $(a,d) \prec (a,c) \prec (b,d)$ because of the order on transpositions (see Lemma \ref{prop:linear-extension}).

In Theorem \ref{thm:LenartPostnikov} as well as in Proposition \ref{prop:ESigma}, the enumeration is only given in term of the Bruhat order. Now we state our key proposition which gives a direct, Bruhat-order free, description of the enumeration.

\begin{Proposition}
\label{prop:validSubwords}
If $w$ is a subword of $W_{\sigma, k}$, $w$ is compatible if and only if $w$ is a valid path in the Bruhat order starting at $\sigma$, \emph{i.e.}, $\sigma w \in \mathfrak{E}_{\sigma, k}$.
\end{Proposition}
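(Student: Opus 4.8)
The plan is to prove Proposition \ref{prop:validSubwords} by two implications, working with the combinatorial description of $W_{\sigma,k}$ as a totally ordered word on $k$-transpositions and tracking how consecutive transpositions interact along a subword. First I would set up the bookkeeping: given a subword $w = \tau_{i_1}\dots\tau_{i_r}$ with $\tau_{i_j} = (a_j, b_j)$, and letting $\sigma^{(j)} = \sigma\tau_{i_1}\cdots\tau_{i_j}$ denote the partial products, I want to characterize exactly when $\tau_{i_{j+1}}$ fails to be a Bruhat transposition of $\sigma^{(j)}$. Since each $\tau_{i_{j}}$ is a $k$-transposition and a Bruhat transposition of $\sigma$ itself, the only way the chain breaks at step $j+1$ is that some earlier transposition $\tau_{i_p}$ with $p \le j$ has "crossed" the pair $(a_{j+1}, b_{j+1})$ in the wrong way, creating an intermediate value. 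The key observation I would isolate as a sub-lemma is: if $(a,c)(b,d)$ appears as a subword with $a<b<c<d$, then after applying both, position $b$ holds the value $\sigma(a)$ and, crucially, the value $\sigma(a)$ now sits between what was at positions around $d$, so that $(a,d)$ — if it belongs to $W_{\sigma,k}$ and is skipped — becomes non-Bruhat for the partial product, because the entry $\sigma(a)$ at position $b$ (with $a < b < d$) lies strictly between $\sigma(a)$'s would-be partner and $\sigma(d)$. I would use Lemma \ref{prop:linear-extension} and Corollary \ref{cor:linear-extension} repeatedly here to pin down the relative order of $a,b,c,d$ and of $(a,d)$ with respect to $(a,c)$ and $(b,d)$ in $\prec$.

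For the forward direction (compatible $\Rightarrow$ valid), I would argue by contradiction: suppose $w$ is compatible but the path fails, i.e. there is a first index $j+1$ at which $\tau_{i_{j+1}} = (a,b)$ is not a Bruhat transposition of $\sigma^{(j)}$. Then there is a position $c$, $a < c < b$, with $\sigma^{(j)}(a) < \sigma^{(j)}(c) < \sigma^{(j)}(b)$. Since $\tau_{i_{j+1}}$ \emph{is} a Bruhat transposition of $\sigma$, the offending inversion at $c$ must have been produced by one of the earlier transpositions $\tau_{i_1},\dots,\tau_{i_j}$; by analyzing which of them can move a value into position $c$ or change $\sigma(a)$ or $\sigma(b)$ — and using the total order $\prec$ to constrain the shapes $(a,y)$, $(c,y)$, $(y,b)$, exactly as in the proof of Lemma \ref{lemme:equiv-lenart} — I would trace this back to an occurrence of a forbidden pattern $(a',c')(b',d')$ with $a'<b'<c'<d'$ together with a skipped transposition $(a',d') \in W_{\sigma,k}$, contradicting compatibility. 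This is essentially a more careful, localized version of the argument already used for Lemma \ref{lemme:equiv-lenart}, now relativized to an arbitrary subword rather than a full chain of the Lenart--Postnikov enumeration.

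For the reverse direction (valid $\Rightarrow$ compatible), I would again argue by contrapositive: assume $w$ is not compatible, so $w$ contains $(a,c)(b,d)$ as a subword with $a<b<c<d$ and there is $(a,d) \in W_{\sigma,k}$ with $(a,d) \notin w$. By the remark following Definition \ref{def:compatible}, $(a,d) \prec (a,c) \prec (b,d)$, so $(a,d)$ "should" appear before $(a,c)$ in $w$ but does not. I would then show directly that once $(a,c)$ and $(b,d)$ have both been applied (and nothing in between has restored the configuration — here one must check that no transposition of $W_{\sigma,k}$ strictly between them in $\prec$ touches the relevant positions, using the order and the fact that $\tau \ltri$-incomparable transpositions commute, cf. Proposition \ref{prop:wsigma-generalisation}), the partial product $\mu'$ at the point just after $(b,d)$ has $\mu'(a) = \sigma(a)$ sitting at position $a$, $\sigma(a)$-related value at position $b$, and $\sigma(d)$ having moved to position $c$, arranged so that $(a,d)$ is no longer a Bruhat transposition of any later partial product — i.e. the path is forced to have died already, or more precisely $\sigma w \notin \mathfrak{E}_{\sigma,k}$ because applying the remaining transpositions of $w$ in order must hit a non-Bruhat step. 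Equivalently, I can phrase it as: the maximal valid prefix of $W_{\sigma,k}$ extending $(a,c)(b,d)$ is forced to include $(a,d)$, so a subword omitting $(a,d)$ while keeping $(a,c)(b,d)$ cannot be valid.

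The main obstacle I anticipate is the bookkeeping in the reverse direction: making rigorous the claim that the presence of the pattern $(a,c)(b,d)$ plus the skipped $(a,d)$ genuinely forces a non-Bruhat step, rather than merely being "usually" bad. One has to verify that no other transpositions of $w$ occurring between $(a,c)$ and $(b,d)$, or after $(b,d)$ and before the position where $(a,d)$ would have acted, can repair the intermediate value $\sigma(a)$ (or $\sigma(d)$) at the critical position — and this requires using Corollary \ref{cor:linear-extension} to show any such interfering transposition would itself have to have a shape incompatible with the total order $\prec$. I expect this to be the technical heart of the argument; the forward direction, by contrast, is close enough to the already-proved Lemma \ref{lemme:equiv-lenart} that it should go through with only modest additional care about which partial product one is working over.
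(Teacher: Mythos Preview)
Your overall strategy matches the paper's: both directions by contrapositive, tracking where the chain first breaks. But there are two concrete gaps.

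\textbf{Forward direction (compatible $\Rightarrow$ valid).} You assert that when the path fails at $\tau_{i_{j+1}}=(a,b)$, ``there is a position $c$, $a<c<b$, with $\sigma^{(j)}(a)<\sigma^{(j)}(c)<\sigma^{(j)}(b)$.'' This presupposes $\sigma^{(j)}(a)<\sigma^{(j)}(b)$, which is not automatic: the other way $(a,b)$ can fail to be a Bruhat transposition is that the earlier steps already created the inversion $\sigma^{(j)}(a)>\sigma^{(j)}(b)$. The paper handles this with a separate preliminary lemma (the ``non-inversion'' Lemma~\ref{lemme:non-inversion}): if $\tau\in W_{\sigma,k}$ is $\succ$ every transposition in a valid prefix $w$, then $\sigma w(a)<\sigma w(b)$. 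You need this (or an equivalent) before your trace-back argument can start; Lemma~\ref{lemme:equiv-lenart}, which you cite as a model, does not provide it.

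\textbf{Reverse direction (not compatible $\Rightarrow$ not valid).} The mechanism you describe is off. You focus on ``$(a,d)$ is no longer a Bruhat transposition of any later partial product,'' but $(a,d)\notin w$, so whether it is Bruhat for a partial product is irrelevant to the validity of $w$. The actual obstruction, and what the paper proves, is that $(b,d)$ itself fails: after $(a,c)$ is applied (and any intervening transpositions of $w$), position $c$ carries a value strictly between those at positions $b$ and $d$, so $(b,d)$ is not a Bruhat step. Your position/value bookkeeping is also scrambled (after $(a,c)$, position $a$ holds $\sigma(c)$, not $\sigma(a)$; nothing moves $\sigma(d)$ to position $c$). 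The delicate part---which you correctly flag as the main obstacle---is showing that the intermediate value at $c$ persists through the transpositions of $w$ lying between $(a,c)$ and $(b,d)$; the paper does this by choosing the offending pair $(a,c)(b,d)$ with $(a,c)$ \emph{minimal} and again invoking Lemma~\ref{lemme:non-inversion}. Without identifying the right failing step and this minimality trick, the argument does not close.
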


Definition \ref{def:compatible} and Proposition \ref{prop:validSubwords} rely on the order on transpositions of $W_{\sigma, k}$ given in Definition \ref{def:Wsigma}. We use the total order for convenience and clarity of the proof. But the crucial arguments come from the partial order defined in Definition \ref{def:partial-order} and a similar description of non compatible subwords could be given for any other linear extension of the partial order. 

We first prove the following lemma:

\begin{Lemme}
\label{lemme:non-inversion}
Let $w = \tau_{i_1} \dots \tau_{i_r}$ be a subword of $W_{\sigma, k}$ which is a valid path, and $\tau = (a,b) \in W_{\sigma, k}$, $\tau \succ \tau_{i_r}$. Then, 
\begin{equation}
\sigma w (a) < \sigma w (b)
\end{equation}
\end{Lemme}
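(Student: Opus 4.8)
\textbf{Proof proposal for Lemma \ref{lemme:non-inversion}.}

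The plan is to induct on the length $r$ of the valid path $w = \tau_{i_1}\dots\tau_{i_r}$. In the base case $r=0$, we have $\sigma w = \sigma$, and the claim that $\sigma(a) < \sigma(b)$ for $\tau = (a,b) \in W_{\sigma,k}$ is exactly the statement that $\tau$ is a Bruhat transposition of $\sigma$ (recall that for $a \leq k < b$, $(a,b)$ being a $k$-Bruhat transposition of $\sigma$ forces $\sigma(a) < \sigma(b)$, and moreover that no intermediate value sits between them in the relevant sense). For the inductive step, write $w = w' \tau_{i_r}$ with $w' = \tau_{i_1}\dots\tau_{i_{r-1}}$ a valid path, set $\nu = \sigma w'$ and $\mu = \nu\tau_{i_r} = \sigma w$, and let $\tau_{i_r} = (c,e)$. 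By the induction hypothesis applied to $w'$ (noting $\tau \succ \tau_{i_r} \succ \tau_{i_{r-1}}$, so $\tau$ dominates the last letter of $w'$ as well), we know $\nu(a) < \nu(b)$. We must check that conjugating by the transposition $(c,e)$ — i.e. swapping the values in positions $c$ and $e$ of $\nu$ — does not reverse the inequality at positions $a$ and $b$.

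The heart of the argument is a case analysis on how the positions $\{a,b\}$ and $\{c,e\}$ interleave, using the total order $\prec$ from Definition \ref{def:Wsigma} and its consequences in Lemma \ref{prop:linear-extension} and Corollary \ref{cor:linear-extension}. Since $\tau = (a,b)$ and $\tau_{i_r} = (c,e)$ are both $k$-transpositions, $a \leq k < b$ and $c \leq k < e$. The inequality $\nu(a) < \nu(b)$ can only be broken by the swap if $\{a,b\} \cap \{c,e\} \neq \emptyset$. If $a = c$: then $(c,e) \prec (a,b)$ means by Corollary \ref{cor:linear-extension} that $b < e$, and we have $\mu(a) = \nu(e)$, $\mu(b) = \nu(b)$; I would use that $\tau_{i_r}$ is a Bruhat transposition of $\nu$ together with the fact that $(a,b), (a,e) \in W_{\sigma,k}$ (the latter because $\tau_{i_r}$ is applied to a valid path from $\sigma$) to compare $\nu(e)$ with $\nu(b)$ and conclude $\mu(a) < \mu(b)$. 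If $b = e$: then $(c,b) \prec (a,b)$ forces $c < a$, and $\mu(a) = \nu(c)$, $\mu(b) = \nu(a)$; here I would use that $(c,b)$ is a Bruhat transposition of $\nu$, so $\nu(c) < \nu(b)$ with no intermediate, and in particular $\nu(c) < \nu(a)$ since $c < a < b$, giving $\mu(a) < \mu(b)$. The remaining overlap cases ($a = e$ impossible since $a \leq k < e$; $b = c$ impossible similarly; and $a = e$, $b = c$ ruled out by sides of $k$) do not arise, and the non-overlapping case ($a < c$ or genuinely disjoint positions) leaves $\nu(a), \nu(b)$ untouched.

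The main obstacle I anticipate is the $a = c$ case: there $\mu(a) = \nu(e)$ and I need $\nu(e) < \nu(b)$, which is not automatic from $w'$ being valid alone — it requires exploiting that $(a,e) \in W_{\sigma,k}$ and tracking how the earlier transpositions of $w'$ moved the values at positions $b$ and $e$. The key leverage is that $W_{\sigma,k}$ is a \emph{fixed} list attached to $\sigma$: applying $\tau_{i_r} = (a,e)$ along a valid path means $(a,e)$ was "available," and one shows by a Bruhat-order/left-factor comparison (as in the proofs of Lemmas \ref{lemme:ESigmaPiNonZeroImage} and \ref{lemme:ESigmaPiZeroImage}) that no value strictly between $\nu(a)$ and $\nu(b)$ in the appropriate range can have landed in position $e$, forcing $\nu(e) \leq \nu(b)$, hence $<$ since they are distinct. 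Once this case is dispatched, the rest is routine bookkeeping, and the lemma follows by induction.
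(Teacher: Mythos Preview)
Your overall plan---induct on $r$ and show the last transposition $\tau_{i_r}=(c,e)$ cannot flip the inequality $\nu(a)<\nu(b)$---is exactly the paper's approach. But your execution of the two overlap cases is off, and you have misidentified which one is hard.

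\textbf{Case $b=e$.} You wrote $\mu(a)=\nu(c)$ and $\mu(b)=\nu(a)$, but $\tau_{i_r}=(c,b)$ swaps positions $c$ and $b$, leaving $a$ fixed: in fact $\mu(a)=\nu(a)$ and $\mu(b)=\nu(c)$. You then derived $\nu(c)<\nu(a)$, which is the wrong direction. The correct argument (with your own tool) is: $(c,b)$ is a Bruhat transposition of $\nu$ and $c<a<b$, so either $\nu(a)<\nu(c)$ or $\nu(a)>\nu(b)$; the induction hypothesis rules out the latter, hence $\mu(a)=\nu(a)<\nu(c)=\mu(b)$. The paper argues this case differently, using the structure of $\prec$: since $(c,b)\prec(a,b)$ implies $\sigma(c)>\sigma(a)$, and all $(a,*)$ transpositions lie after $(c,b)$ in $W_{\sigma,k}$, position $a$ has not yet been touched, so $\nu(a)=\sigma(a)<\sigma(c)\le\nu(c)$.

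\textbf{Case $a=c$.} This is \emph{not} the main obstacle; it falls to the exact same one-line argument. Here $\tau_{i_r}=(a,e)$ is a Bruhat transposition of $\nu$ with $a<b<e$, so either $\nu(b)<\nu(a)$ or $\nu(b)>\nu(e)$; the induction hypothesis gives $\nu(a)<\nu(b)$, hence $\mu(a)=\nu(e)<\nu(b)=\mu(b)$. No appeal to Lemmas \ref{lemme:ESigmaPiNonZeroImage} or \ref{lemme:ESigmaPiZeroImage} is needed; the ``no intermediate value'' condition in the definition of a Bruhat transposition does all the work. This is precisely the paper's argument for this case.
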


\begin{proof}
Note that $\sigma(a) < \sigma(b)$ by definition of $\tau$, and as $\tau \succ \tau_{i_r}$, it has not yet been applied. Let $\theta \in w$ be a transposition with $w = w' \theta w''$. We set $\sigma' = \sigma w'$ and we suppose that

\begin{equation}
\label{eq:proof-non-inversion}
 \sigma'(a) < \sigma'(b)\text{~ and ~}
 \sigma'\theta (a) > \sigma'\theta(b).
\end{equation}

If $\theta = (x,b)$, as $\theta \prec \tau$, then $\sigma(x) > \sigma(a)$. We also have $\sigma'(x) \geq \sigma(x)$ and $\sigma'(a)=\sigma(a)$ (no transposition on $a$ has yet been applied), so $\sigma'\theta(b) = \sigma'(x) > \sigma'(a)$ and \eqref{eq:proof-non-inversion} cannot be true. So $\theta = (a,y)$. We have $\theta \prec \tau = (a,b)$, so by Corollary \ref{cor:linear-extension}, we have $y>b$. As $\theta$ is a Bruhat transposition for $\sigma'$, we have either $\sigma'(b)<\sigma'(a)$  or $\sigma'(b) > \sigma'(y) = \sigma'\theta(a)$. Both relations contradict \eqref{eq:proof-non-inversion} and so such a transposition $\theta$ does not exist.
\end{proof}

\begin{proof}[Proof of Proposition \ref{prop:validSubwords}]
First, let us prove that if $w =  \tau_{i_1}  \dots \tau_{i_r}$ is a subword of $W_{\sigma,k}$ and is not a valid path, then $w$ is not compatible. Let $w_1$ be the longest left factor of $w$ which is a valid path. Then $w = w_1 \tau w_1'$ with $\tau=(b,d)$ not a valid Bruhat transposition for $\sigma w_1$. By Lemma \ref{lemme:non-inversion}, $\sigma w_1(b) < \sigma w_1(d)$ so there is $c$ such that $b<c<d$ and 

\begin{equation}
\label{eq:proof-valid-subword-conflit}
\begin{array}{ccccc}
 \sigma w_1(b)& < &\sigma w_1 (c)& < &\sigma w_1 (d)\\
 \vgeq        &   &              &   & \vleq         \\
 \sigma (b)   &   &              &   & \sigma(d)
\end{array}
\end{equation}

First, let us recall Theorem \ref{thm:KBruhatChar} and notice that $c>k$. Indeed, we have that $\tau$ is a Bruhat transposition for $\sigma$ so $\sigma(c)<\sigma(b)$ or $\sigma(c)>\sigma(d)$. In the first case, we read in \eqref{eq:proof-valid-subword-conflit} that $\sigma(c) < \sigma w_1 (c)$ so $c \leq k$. But as $\sigma(c) < \sigma(b)$, any transposition $(c,*)$ is after $\tau=(b,d)$ and then $\sigma w_1(c) = \sigma(c)$ which is not possible. Therefore, we have that $\sigma(c)>\sigma(d) \geq \sigma w_1 (d) > \sigma w_1 (c)$ which implies $c>k$. 

Let $(a,c)$ be the last transposition acting on position $c$. We know that $(a,c)$ exists because $\sigma(c) \neq \sigma w_1(c)$. We have $w_1 = w_2 (a,c) w_2'$. Also, as $\sigma(c) > \sigma(d)$ and $(a,c) \prec (b,d)$, then $a \neq b$ and we have 

\begin{equation}
\label{eq:proof-valid-subword-main}
\begin{array}{ccccc}
\sigma w_2 (d) & > & \sigma w_2 (a) & > & \sigma w_2 (b) \\
\vleq          &   & \vgeq          &   & \veq \\
\sigma(d) & & \sigma(a) & & \sigma(b)
\end{array}
\end{equation}

The vertical relations come from the $k$-Bruhat order between $\sigma$ and $\sigma w_2$ and the horizontal ones from~\eqref{eq:proof-valid-subword-conflit}. We claim that
\begin{align}
\label{eq:proof-valid-subword-values}
& \sigma(c) > \sigma(d) > \sigma (a) > \sigma (b) \text{~ and} \\
\label{eq:proof-valid-subword-positions}
& a < b < c < d.
\end{align}

We already have $\sigma(c)>\sigma(d)$ and we can read in \eqref{eq:proof-valid-subword-main} that $\sigma(d) > \sigma(a)$. Also $(a,c) \prec (b,d)$ gives $\sigma(a)>\sigma(b)$ and so \eqref{eq:proof-valid-subword-values} is true. To prove \eqref{eq:proof-valid-subword-positions}, we only have to show that $a<b$. It is done by saying that $(b,d)$ is a Bruhat transposition for $\sigma$, then because of \eqref{eq:proof-valid-subword-values}, we cannot have $b<a<d$.

By \eqref{eq:proof-valid-subword-values} and \eqref{eq:proof-valid-subword-positions}, we have that $(a,c)(b,d)$ is a subword of $w$ as defined in Definition \ref{def:compatible}, Condition \ref{enum:compatible1}. Now, $(a,d)$ is also a Bruhat transposition for $\sigma$. Indeed, if there is $a < x < d$ such that $\sigma(a) < \sigma(x) < \sigma(d)$, then either $x < c$ and $(a,c)$ is not a Bruhat transposition, or $x > c >b$ and $(b,d)$ is not a Bruhat transposition. So $(a,d) \in W_{\sigma, k}$ and because $\sigma w_2 (d) > \sigma(a)$, then $(a,d) \notin w$ and Condition \ref{enum:compatible2} of Definition \ref{def:compatible} is also satisfied, $w$ is a non compatible subword. 

Now, let $w$ be a non compatible subword of $W_{\sigma, k}$ from Definition \ref{def:compatible}, we prove that $w$ is not a valid path. By definition, $w$ contains at least one subword $(a,c)(b,d)$ satisfying conditions (\ref{enum:compatible1}) and (\ref{enum:compatible2}) of Definition \ref{def:compatible}. We chose one where the transposition $(a,c)$ is minimal. We write $w = w_1 (b,d) w_1'$, and we prove that if $w_1$ is a valid path, then $(b,d)$ is not a Bruhat transposition for $\sigma w_1$.

We have that $(a,c) \in w_1$, so $w_1 = w_2 (a,c) w_2'$. We show that 

\begin{equation}
\begin{array}{ccccc}
\sigma(b)       & < &\sigma(a)       & < & \sigma(d) \\
\veq            &   & \vleq          &   & \vgeq     \\
\sigma w_2 (b)  & < & \sigma w_2 (a) & < & \sigma w_2 (d).
\end{array}
\end{equation}

The relation $\sigma(b) < \sigma(a) < \sigma(d)$ is immediate as, by definition, $a<b<d$ and $(a,d),(b,d) \in W_{\sigma, k}$ which by Lemma \ref{prop:linear-extension} gives us $(a,d) \prec (b,d)$. Relations $\sigma(a) \leq \sigma w_2 (a)$ and $\sigma(d) \geq \sigma w_2(d)$ come from the $k$-Bruhat order. Any transposition acting on $b$ is such that $(b,*) \succ (a,c)$ so the value in position $b$ has not been changed. There only remains to prove $\sigma w_2 (a) < \sigma w_2 (d)$. Any transposition $\tau$ of $w_2$ is such that $\tau \prec (a,d)$. Indeed, if $(a,x) \in w$ with $(a,d) \prec (a,x) \prec (a,c)$, we have by Corollary~\ref{cor:linear-extension} that $c<x<d$ and then $(a,x)(b,d)$ satisfy conditions (\ref{enum:compatible1}) and (\ref{enum:compatible2}) of Definition \ref{def:compatible}. We have taken $(a,c)$ minimal, so such a transposition $(a,x)$ does not exist. Now we can apply Lemma \ref{lemme:non-inversion} and we have $\sigma w_2 (a) < \sigma w_2 (d)$. If we set $\sigma' = \sigma w_2 (a,c)$, we now have

\begin{equation}
\label{eq:proof-valid-subword-conflit2}
b<c<d \text{~ and ~} \sigma'(b) < \sigma'(c) < \sigma'(d).
\end{equation}

We claim that this relation is preserved through $w_2'$. If so, then $(b,d)$ is not a Bruhat transposition for $\sigma w_1$ and the result is proved. We show recursively that if $w_2' = w_3 \tau w_3'$ with $\sigma' := \sigma w_2 (a,c) w_3$ satisfying \eqref{eq:proof-valid-subword-conflit2}, then $\sigma' \tau$ still satisfy \eqref{eq:proof-valid-subword-conflit2}. 

We have $(a,c) \prec \tau \prec (b,d)$. If $\tau = (b,y)$, then by Corollary~\ref{cor:linear-extension}, $y>d>c$. As $\tau$ is a Bruhat transposition for $\sigma'$ and $\sigma'(c)>\sigma'(b)$ then $\sigma'(c)>\sigma'(y)$.  If $\tau = (y,c)$ or $(y,d)$, then $y<a<b$ and so $\sigma'(b)<\sigma'(y)$ because $\tau$ is a Bruhat transposition for $\sigma'$ and $\sigma'(b) < \sigma'(c) < \sigma'(d)$. In any case, $\sigma' \tau$ satisfy \eqref{eq:proof-valid-subword-conflit2}.
\end{proof}

Lemma \ref{lemme:intervalMax} is a direct consequence of Proposition \ref{prop:validSubwords}.

\begin{proof}[Proof of Lemma \ref{lemme:intervalMax}]
Let $\mu \in \mathfrak{E}_{\sigma, k}$, $\mu \neq \eta(\sigma, k)$. We prove that there exists $\tilde{\mu} \in \mathfrak{E}_{\sigma, k}$ with $\tilde{\mu}$ a direct successor of $\mu$. This is enough to prove Lemma \ref{lemme:intervalMax} as, if $\tilde{\mu} \neq \eta(\sigma, \mu)$, we can apply the algorithm recursively to find a direct successor of $\tilde{\mu}$ and so on. At each step, the length of the permutation is increased by one, so the process stops after $\ell(\eta) - \ell(\mu)$ iterations. 

For $\mu = \sigma w$ with $w$ a subword of $W_{\sigma, k}$ and $\mu \neq \eta$, let us set $\tau \in W_{\sigma, k}$, $\tau \notin w$ with $\tau$ minimal, \emph{i.e}, $\tau$ is the first transposition of $W_{\sigma,k}$ which is not in $w$. Then we define $\tilde{w} = u \tau v$, where $w = uv$ such that $\tilde{w}$ is still a subword of $W_{\sigma, k}$ ($\tau$ is greater than the transpositions of $u$ and smaller than the transpositions of $v$). Note that $u \tau$ is actually a left factor of $W_{\sigma, k}$ because $\tau$ is minimal among the transpositions that are not in $w$. We set $\tilde{\mu} = \sigma \tilde{w}$.

We prove by Proposition \ref{prop:validSubwords} that $\tilde{w}$ is a valid path. Let us suppose that $\tilde{w}$ is a noncompatible subword. Then we have $(a,d) \prec (a,c) \prec (b,d)$ all in $W_{\sigma, k}$ with $(a,d) \notin \tilde{w}$ and $(a,c),(b,d) \in \tilde{w}$. This is also true for $w$. Indeed, $\tilde{w} = u \tau v$ with $u \tau$ a left factor of $W_{\sigma, k}$. Which means that $(a,d) \notin u \tau$ implies $\tau \prec (a,d) \prec (a,c) \prec (b,d)$. The conditions on $(a,d)$, $(a,c)$ and $(b,d)$ are still satisfied by $w=uv$ which contradicts the fact that $w$ is a valid path. So $\tilde{w}$ is also a valid path. 

Then $\ell(\tilde{\mu}) = \ell(\sigma) + \vert \tilde{w} \vert = \ell(\sigma) + \vert w \vert +1 = \ell(\mu) + 1$ and $\tilde{\mu}$ = $\mu v^{-1} \tau v = \mu \theta$ where $\theta$ is the conjugate of a transposition, \textit{i.e.}, a transposition. So $\tilde{\mu}$ is a direct successor of $\mu$.
\end{proof}

Here is an example of the algorithm $\mu \rightarrow \tilde{\mu}$ described in the proof applied on $\mu = 1365 \vert 42$ for $\mathfrak{E}_{1362 \vert 54}$ drawn in Fig.~\ref{fig:ESigma}. 

\begin{align*}
\mu = \mu_0 &= (1362 \vert 54)(46)(45) =  1365 \vert 42 \\
\mu_1:= \tilde{\mu}_0 &= (1362 \vert 54)(26)(46)(45) = 1465 \vert 32 \\
\mu_2:= \tilde{\mu}_1 &=  (1362 \vert 54)(26)(25)(46)(45) = 1564 \vert 32 = \eta
\end{align*} 

\begin{Remarque}
\label{rqe:permutation-pattern}
If $W_{\sigma, k}$ contains $(a,d)(a,c)(b,d)$ as a subword, then we can find noncompatible subwords. We say that $W_{\sigma, k}$ contains a \textit{conflict pattern}. In particular, this implies that $\sigma$ contains the permutation pattern $21 \vert 43$ as we have $\sigma(b) < \sigma(a) < \sigma(d) < \sigma(c)$. But the converse implication is not true. As an example, the permutation $213 \vert 54$ contains the permutation pattern but as $(1,5)$,$(1,4)$, and $(2,5)$ are not Bruhat transpositions, $W_{\sigma, k}$ does not contain the conflict pattern. 
\end{Remarque}

\begin{Remarque}
\label{rqe:interval-size}
Proposition \ref{prop:validSubwords} also gives us information about the size of the interval. If $W_{\sigma,k}$ is of size $m$ and does not contain any conflict pattern, then $\vert \mathfrak{E}_{\sigma, k} \vert = 2^m$. Besides, the number of non compatible subwords due to a specific trio $(a,d) \prec (a,c) \prec (b,d)$ is $2^{m-3}$, so when $W_{\sigma, k}$ contains only one conflict pattern, we have $\vert \mathfrak{E}_\sigma \vert = 2^m - 2^{m-3}$. As an example, $\vert \mathfrak{E}_{1362 \vert 54} \vert = 2^4 - 2^1 = 14$. More generally, we have to perform an inclusion-exclusion algorithm. If the number of conflict patterns is high, the inclusion-exclusion algorithm can take much longer to compute than the actual generation of $\mathfrak{E}_{\sigma, k}$. The permutation $4321 \vert 8765$ contains 36 conflict patterns and one should compute billions of intersections of conflict patterns whereas the size of $\mathfrak{E}_\sigma$ is only 6902. But the algorithm can be used efficiently on large permutations with few conflict patterns. 
\end{Remarque}

\section{Other parabolic subgroups}
\label{sec:parabolic-subgroups}

When we compute $\mathfrak{E}_{\zeta, k} \pi_{\zeta^{-1} \sigma}$, we apply operators $\pi_i$ where $i \neq k$, \emph{i.e.}, an element of the parabolic subgroup generated by $ \lbrace \pi_1, \dots, \pi_{k-1}, \pi_{k+1}, \allowbreak \dots, \pi_n \rbrace$. Proposition \ref{prop:ESigma} can be generalized to other parabolic subgroups. 

\begin{Definition}
\label{def:gen-blocks}
Let $\sigma \in \mathfrak{S}_n$ and $1 \leq k_1 < k_2 < \dots < k_m < n$. By convention, we set $k_0 = 0$. Then the block $\beta_i(\sigma)$ for $0 < i \leq m$ is the word $\left[ \sigma(k_{i-1}+1), \sigma(k_{i-1}+2, \dots, \sigma(n) \right]$ and $ \mathcal{W}_i$ is the list of transpositions 

\begin{align}
\nonumber
(&(k_{i-1} +1, n), (k_{i-1}+1, n-1), \dots, (k_{i-1}+1, k_i+1),\\
\nonumber
 &(k_{i-1} +2, n), (k_{i-1}+2, n-1), \dots, (k_{i-1}+2, k_i+1),\\
\nonumber
 &\dots,\\
 &(k_i, n), (k_i, n-1), \dots, (k_i, k_i+1)).
\end{align}
We define $ \mathcal{W}_{(k_1, \dots, k_m)}$ to be the concatenation of the lists $ \mathcal{W}_{i}$ for $m \geq i \geq 1$ \emph{i.e.}, $ \mathcal{W}_{(k_1, \dots, k_m)} := ( \mathcal{W}_{m}, \dots, \mathcal{W}_{1} )$.
\end{Definition}

As an example, if $\sigma = 43 \vert 2 \vert 836 \vert 57$ ($k_1 = 2$, $k_2 = 3$, $k_3 = 6$), then $\beta_1 =  43 \vert 283657$, $\beta_2 = 2 \vert 83657$, $\beta_3 = 836 \vert 57$ and $ \mathcal{W}_3 = ( (4,8),(4,7), (5,8), \allowbreak (5,7),(6,8),(6,7))$, $\mathcal{W}_2 = ((3,8),(3,7),(3,6),(3,5),(3,4))$, $\mathcal{W}_1 = ((1,8), \allowbreak (1,7),(1,6),(1,5),(1,4),(1,3),(2,8),(2,7),(2,6),(2,5),(2,4),\allowbreak(2,3))$.

\begin{Proposition}
\label{thm:gen-esigma}
For $\sigma \in \mathfrak{S}_n$ and $1 \leq k_1 < k_2 < \dots < k_m < n$, let $\zeta = \zeta(\sigma,k_1, \dots, k_m)$ be the element of the coset $\sigma (\mathfrak{S}_{k_1} \times \mathfrak{S}_{k_2 - k_1} \times \mathfrak{S}_{k_3 - k_2} \times \dots \times \mathfrak{S}_{n - k_m})$ with maximal length. Then
\begin{equation}
\label{eq:gen-esigma}
K_{\omega} \hat{\pi}_{\omega \zeta} \pi_{\zeta^{-1} \sigma} = \mathfrak{E}_{\sigma, (k_1, \dots, k_m)}
\end{equation}
where 
\begin{equation}
\mathfrak{E}_{\sigma, (k_1, \dots, k_m)}  = \sum_{w} (-1)^{\vert w \vert} K_{\sigma w}
\end{equation}
summing over subwords of the list $\mathcal{W}_{(k_1, \dots, k_m)}$ that correspond to valid chains of the Bruhat order starting at $\sigma$.

Besides, the sum is cancellation free, the coefficients are $\pm 1$, and the summing set is closed by interval.
\end{Proposition}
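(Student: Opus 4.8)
The plan is to mirror the three-step argument of Sections \ref{sec:intervalClosure}--\ref{sec:interval}; the only genuinely new ingredient is the partial order on transpositions. In the multi-block case the right order on $k$-transpositions is $\blacktriangleleft$, the transitive closure of the relation ``$\tau$ precedes $\tau'$ in $\mathcal{W}_{(k_1,\dots,k_m)}$ and $\tau,\tau'$ share a position''. By construction the concatenation order of $\mathcal{W}_{(k_1,\dots,k_m)}$ is a linear extension of $\blacktriangleleft$; and if two transpositions are $\blacktriangleleft$-incomparable they have disjoint support, hence commute, so --- exactly as in Proposition \ref{prop:wsigma-generalisation} --- any linear extension of $\blacktriangleleft$ produces the same $\mathfrak{E}_{\sigma,(k_1,\dots,k_m)}$ and one may use whichever is convenient. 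When $m=1$ this $\blacktriangleleft$ is the order $\ltri$ of Definition \ref{def:partial-order}, but for $m\ge 2$ it is strictly richer: it involves ``chained'' pairs $(a,b),(b,d)$ sharing an endpoint across consecutive blocks, and it reverses the order of same-endpoint pairs $(a,c),(b,c)$ whose first coordinates lie in different blocks.

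First I would redo Section \ref{sec:intervalClosure}. The identity $K_\omega\hat\pi_{\omega\zeta}=\hat K_\zeta$ is unchanged, and $\hat K_\zeta\pi_{\zeta^{-1}\sigma}=\hat K_\zeta\sum_{\nu\le\zeta^{-1}\sigma}\hat\pi_\nu=\sum_{\sigma\le\mu\le\zeta}\hat K_\mu$ holds as in Proposition \ref{thm:HatK}, the interval $[\sigma,\zeta]$ now being the coset $\sigma(\mathfrak{S}_{k_1}\times\dots\times\mathfrak{S}_{n-k_m})$, since $\zeta$ is decreasing on each block and $\zeta^{-1}\sigma$ sorts within blocks. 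Lemma \ref{lemme:coset} goes through with the maximal element of the coset intersected with $[\sigma,\nu]$ built greedily, block by block, feasibility being ensured by $\nu\ge\sigma$; the change of basis of Proposition \ref{prop:complement} then writes the left-hand side of \eqref{eq:gen-esigma} in the $K$-basis as $\sum(-1)^{\ell(\nu)-\ell(\sigma)}K_\nu$ over the $\nu\ge\sigma$ with $\nu\ngeq\sigma\tau$ for every transposition $\tau$ exchanging two positions inside one block with $\sigma\tau\gtrdot\sigma$. This support has $\pm1$ coefficients and is closed by interval (Corollary \ref{thm:intervalClosure}).

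Next I would prove \eqref{eq:gen-esigma} itself by imitating Section \ref{sec:k-chains-direct-proof}. The base case $\mathfrak{E}_{\zeta,(k_1,\dots,k_m)}=\sum_{\mu\ge\zeta}(-1)^{\ell(\mu)-\ell(\zeta)}K_\mu$ (the analogue of Proposition \ref{prop:EZeta}) uses that each transposition step raises the length by one, so the support lies in $[\zeta,\omega]$; conversely, as $\zeta$ is antidominant on every block, a Bergeron--Sottile type greedy algorithm realizes each $\mu\in[\zeta,\omega]$ by a subword of $\mathcal{W}_{(k_1,\dots,k_m)}$, and since $K_\omega\hat\pi_{\omega\zeta}=\hat K_\zeta=\sum_{\mu\ge\zeta}(-1)^{\ell(\mu)-\ell(\zeta)}K_\mu$ there is neither cancellation nor multiplicity. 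The inductive step is the analogue of Proposition \ref{prop:InductiveStep}: for $i\notin\{k_1,\dots,k_m\}$ with $\sigma s_i<\sigma$ one proves $\mathfrak{E}_{\sigma,(k_1,\dots,k_m)}\pi_i=\mathfrak{E}_{\sigma s_i,(k_1,\dots,k_m)}$ via the conjugation-by-$s_i$ bijection of Lemmas \ref{lemme:ESigmaPiNonZeroImage} and \ref{lemme:ESigmaPiZeroImage}, with the appeals to Corollary \ref{cor:linear-extension} replaced by the corresponding facts about $\blacktriangleleft$. Cancellation-freeness and the $\pm1$ claim then follow exactly as in Propositions \ref{prop:EZeta} and \ref{prop:ESigma} (or directly: two distinct valid subwords giving the same $\nu$ would each contribute $(-1)^{\ell(\nu)-\ell(\sigma)}$, producing a coefficient $\pm2$, impossible by the first step), and closure by interval is inherited from the first step once \eqref{eq:gen-esigma} identifies the two supports.

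The hard part will be this second step. The single-block proofs rest on the clean dictionary between $\ltri$ and permutation values supplied by Lemma \ref{prop:linear-extension} and Corollary \ref{cor:linear-extension}, which the richer order $\blacktriangleleft$ lacks; one must re-examine the case analyses of Lemma \ref{lemme:non-inversion}, Lemmas \ref{lemme:ESigmaPiNonZeroImage}--\ref{lemme:ESigmaPiZeroImage} and the greedy chain construction so that the new configurations --- chained pairs and different-block same-endpoint pairs, which in $\mathcal{W}_{(k_1,\dots,k_m)}$ occur in the opposite order to the naive expectation --- are treated correctly. Concretely, the content is to verify that the concatenation order on $\mathcal{W}_{(k_1,\dots,k_m)}$ is compatible with the Bruhat-order dynamics in precisely the sense those lemmas require; everything else reduces to quoting Sections \ref{sec:intervalClosure}--\ref{sec:interval}.
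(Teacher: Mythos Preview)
Your first step --- generalising Section \ref{sec:intervalClosure} to the multi-block coset --- is exactly what the paper does (Proposition \ref{prop:gen-complement}), and your observation that Lemma \ref{lemme:coset} extends by induction on the number of blocks matches the paper's one-line justification. So the closure-by-interval and $\pm 1$ claims are handled the same way.

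Where you diverge is in proving \eqref{eq:gen-esigma} itself. You propose to rebuild the whole machinery of Section \ref{sec:k-chains-direct-proof} for the richer order $\blacktriangleleft$: redo Proposition \ref{prop:EZeta}, redo Lemmas \ref{lemme:ESigmaPiNonZeroImage}--\ref{lemme:ESigmaPiZeroImage}, re-examine all case analyses for chained and cross-block configurations. You correctly flag this as the hard part, and indeed it would be substantial work. The paper sidesteps all of it by a short induction on $m$: writing $\zeta'$ for the maximal coset representative with only the first $m-1$ divisions, one has $\zeta'\ge\zeta$ in right weak order, so $\hat\pi_{\omega\zeta}=\hat\pi_{\omega\zeta'}\hat\pi_{\zeta'^{-1}\zeta}$. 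Since $\pi_{\zeta^{-1}\sigma}$ contains no $\pi_{k_{m-1}}$, the operators with index ${<}k_{m-1}$ commute past $\hat\pi_{\zeta'^{-1}\zeta}$ and past the operators with index ${>}k_{m-1}$; the product then splits into a piece acting only on the last block $\beta_m$ (handled by the single-$k$ Proposition \ref{prop:ESigma}) and a piece to which the induction hypothesis applies on the first $m-1$ blocks. Cancellation-freeness falls out at the end by comparing with Proposition \ref{prop:gen-complement}.

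The payoff of the paper's route is that no lemma from Section \ref{sec:k-chains} needs to be reproved: the single-block case is used as a black box, once per block, and the only new content is the commutation argument. Your route would give a self-contained direct construction (and perhaps a sharper combinatorial understanding of which subwords of $\mathcal{W}_{(k_1,\dots,k_m)}$ are valid), but at the cost of the re-verification you anticipate; as written, that re-verification is a plan rather than a proof.
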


Note that when $m=1$, the enumeration is the one described in Proposition \ref{prop:ESigma}, in this case the summing set has a unique maximal element. This is not true in general when $m>1$. As an example, for $\sigma = 25 \vert 14 \vert 63$, $k_1 = 2$, $k_2 = 4$, one obtain two elements with maximal length: $362541 = 251463 (3,6)(4,5)(1,3)(2,4)$ and $461532 = 251463 (4,5)(1,6)(1,5)(2,4)$.

First, the fact that $\mathfrak{E}_{\sigma, k_1, \dots, k_m}$ is closed by interval can easily be proved by a generalization of Proposition \ref{prop:complement}. 

\begin{Proposition}
\label{prop:gen-complement}
Let $\Succs_{\sigma}^{-(k_1, \dots, k_m)}$ be the set of successors of $\sigma$, $\sigma' = \sigma \tau$ where $\tau$ is not a $k_i$-transposition of $\sigma$ for $1 \leq i \leq m$. Then we have:

\begin{equation}
K_{\omega} \hat{\pi}_{\omega \zeta} \pi_{\zeta^{-1} \sigma}  = \sum (-1)^{\ell(\nu) - \ell(\sigma)} K_\nu
\end{equation}
summing over permutations $\nu$ such that $\nu \geq \sigma$ and, $\forall \sigma' \in \Succs_{\sigma}^{-(k_1, \dots, k_m)}$, $\nu \ngeq \sigma'$. 
In particular, the set of permutations of the summation is closed by interval.
\end{Proposition}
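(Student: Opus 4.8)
The plan is to transcribe, step for step, the proof of Proposition~\ref{prop:complement}, the only new ingredient being that the parabolic coset
\[
C:=\sigma\bigl(\mathfrak{S}_{k_1}\times\mathfrak{S}_{k_2-k_1}\times\cdots\times\mathfrak{S}_{n-k_m}\bigr)
\]
is the intersection $C=\bigcap_{i=1}^{m}\sigma(\mathfrak{S}_{k_i}\times\mathfrak{S}_{n-k_i})$ of the two-block cosets used in Section~\ref{sec:intervalClosure}. First I would record that the computation of $K_\omega\hat\pi_{\omega\zeta}\pi_{\zeta^{-1}\sigma}$ in the $\hat K$ basis carries over verbatim from \eqref{eq:KomegaHatPi}--\eqref{eq:HatKzeta} and Proposition~\ref{thm:HatK}: the maximal-length element $\zeta$ of $C$ is decreasing on each of the $m+1$ intervals of consecutive positions cut out by $k_1,\dots,k_m$, the word $\omega\zeta$ is reduced with every letter a descent when applied to $\omega$, so $K_\omega\hat\pi_{\omega\zeta}=\hat K_\zeta$; and $\zeta^{-1}\sigma\in\mathfrak{S}_{k_1}\times\cdots\times\mathfrak{S}_{n-k_m}$, so every subword of one of its reduced decompositions merely sorts $\hat K_\zeta$ inside the decreasing blocks of $\zeta$. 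This yields
\[
K_\omega\hat\pi_{\omega\zeta}\pi_{\zeta^{-1}\sigma}=\sum_{\sigma\leq\mu\leq\zeta}\hat K_\mu,
\]
the index set being precisely $\{\mu\in C:\mu\geq\sigma\}$.

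Next I would apply the change of basis \eqref{eq:HatKToK}, obtaining $K_\omega\hat\pi_{\omega\zeta}\pi_{\zeta^{-1}\sigma}=\sum_{\nu\geq\sigma}c_\nu K_\nu$ with $c_\nu=\sum_{\mu\in[\sigma,\nu]\cap C}(-1)^{\ell(\nu)-\ell(\mu)}$. The geometric heart of the argument, replacing Lemma~\ref{lemme:coset}, is that $[\sigma,\nu]\cap C$ is again a Bruhat interval with bottom element $\sigma$. This I would prove by iterating Lemma~\ref{lemme:coset}: intersecting $[\sigma,\nu]$ with $\sigma(\mathfrak{S}_{k_1}\times\mathfrak{S}_{n-k_1})$ gives an interval $[\sigma,\nu^{(1)}]$; intersecting $[\sigma,\nu^{(1)}]$ with $\sigma(\mathfrak{S}_{k_2}\times\mathfrak{S}_{n-k_2})$ gives $[\sigma,\nu^{(2)}]$ by Lemma~\ref{lemme:coset} applied with $\nu^{(1)}$ in the role of $\nu$, and so on; after $m$ steps, $[\sigma,\nu]\cap C=[\sigma,\nu^{(m)}]$. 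Then, exactly as in Proposition~\ref{prop:complement}, \cite{VermatMobius} identifies $(-1)^{\ell(\nu)-\ell(\mu)}$ with the value at $(\mu,\nu)$ of the Möbius function of the Bruhat order, so $c_\nu$ is a sum of Möbius values over the interval $[\sigma,\nu^{(m)}]$; since nontrivial Bruhat intervals are Eulerian, this sum vanishes unless $\nu^{(m)}=\sigma$, in which case $c_\nu=(-1)^{\ell(\nu)-\ell(\sigma)}$.

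Finally I would translate the surviving condition ``$[\sigma,\nu]\cap C=\{\sigma\}$''. A transposition $\tau=(a,b)$ is a $k_i$-transposition of $\sigma$ for some $i$ exactly when some $k_i$ satisfies $a\leq k_i<b$, i.e.\ when $a$ and $b$ lie in different blocks; hence $\Succs_{\sigma}^{-(k_1,\dots,k_m)}$ is precisely the set of direct successors of $\sigma$ inside $C$. Since a parabolic coset is graded by length and contains a saturated chain between any two of its comparable elements, $[\sigma,\nu]\cap C\neq\{\sigma\}$ holds if and only if $\nu\geq\sigma'$ for some $\sigma'\in\Succs_{\sigma}^{-(k_1,\dots,k_m)}$; thus $c_\nu$ is nonzero (and equal to $(-1)^{\ell(\nu)-\ell(\sigma)}$) exactly for the permutations $\nu\geq\sigma$ described in the statement. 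Closure by interval is then immediate: if $\sigma\leq\nu'\leq\nu$ and $\nu\ngeq\sigma'$ for all $\sigma'\in\Succs_{\sigma}^{-(k_1,\dots,k_m)}$, then certainly $\nu'\ngeq\sigma'$ for all such $\sigma'$, so $\nu'$ lies in the same summation set. The only genuinely new step compared with the $m=1$ case is the iteration of Lemma~\ref{lemme:coset}, and the one point requiring a little care is checking its hypothesis $\nu^{(i-1)}>\sigma$ at each stage --- which is harmless, since if $\nu^{(i-1)}=\sigma$ then $[\sigma,\nu]\cap C$ is already reduced to $\{\sigma\}$ and the iteration stops; everything else is a word-for-word adaptation of Section~\ref{sec:intervalClosure}.
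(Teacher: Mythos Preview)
Your proposal is correct and follows essentially the same approach as the paper. The paper's proof is very brief---it simply notes that the argument of Proposition~\ref{prop:complement} carries over once Lemma~\ref{lemme:coset} is generalized to the multi-block coset, and that this generalization follows by induction from the chain of inclusions $\sigma(\mathfrak{S}_{k_1}\times\mathfrak{S}_{n-k_1})\supset\sigma(\mathfrak{S}_{k_1}\times\mathfrak{S}_{k_2-k_1}\times\mathfrak{S}_{n-k_2})\supset\cdots$; your iteration of Lemma~\ref{lemme:coset} via the intersection $C=\bigcap_i\sigma(\mathfrak{S}_{k_i}\times\mathfrak{S}_{n-k_i})$ is exactly the same induction expressed slightly differently, and the rest of your write-up spells out in detail what the paper leaves implicit.
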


\begin{proof}
The proof is completely similar to that of Proposition \ref{prop:complement}. One just has to see that Lemma \ref{lemme:coset} can be generalized to cosets of the form $\sigma (\mathfrak{S}_{k_1} \times \mathfrak{S}_{k_2 - k_1} \times \mathfrak{S}_{k_3 - k_2} \times \dots \times \mathfrak{S}_{n - k_m})$. This can be done by induction as we have $\sigma (\mathfrak{S}_{k_1} \times \mathfrak{S}_{n-k_1}) \supset \sigma(\mathfrak{S}_{k_1} \times \mathfrak{S}_{k_2 - k_1} \times \mathfrak{S}_{n-k_2}) \supset \dots \supset \sigma (\mathfrak{S}_{k_1} \times \mathfrak{S}_{k_2 - k_1} \times \dots \times \mathfrak{S}_{n - k_m})$.
\end{proof}

\begin{proof}[Proof of Proposition \ref{thm:gen-esigma}]
The proof is done by induction on $m$. When $m = 1$, it corresponds to Proposition \ref{prop:ESigma}. Now, let $\zeta'$ be the element of $\sigma (\mathfrak{S}_{k_1} \times \mathfrak{S}_{k_2 - k_1} \times \mathfrak{S}_{k_3 - k_2} \times \dots \times \mathfrak{S}_{n - k_{m-1}})$ with maximal length. The inversions of $\zeta'$ contain the inversions of $\zeta$ and so, $\zeta' \geq \zeta$ for the right weak order. This means that any reduced decomposition of $\omega \zeta'$ is a prefix of a reduced decomposition of $\omega \zeta$ and we have:

\begin{equation}
K_{\omega} \hat{\pi}_{\omega \zeta} \pi_{\zeta^{-1} \sigma} = K_{\omega} \hat{\pi}_{\omega \zeta'} \hat{\pi}_{\zeta'^{-1} \zeta} \pi_{\zeta^{-1} \sigma}.
\end{equation}

All operators $\hat{\pi}_i$ in $\hat{\pi}_{\zeta'^{-1} \zeta}$ are such that $i>k_{m-1}$. Also, there is no operator $\pi_{k_{m-1}}$ in the product $\pi_{\zeta^{-1} \sigma}$, so operators $\pi_i$ with $i<k_{m-1}$ commute with operators  $\pi_i$ with $i>k_{m-1}$ and with operators of $\hat{\pi}_{\zeta'^{-1} \zeta}$ and we can write

\begin{equation}
K_{\omega} \hat{\pi}_{\omega \zeta} \pi_{\zeta^{-1} \sigma} = (K_{\omega} \hat{\pi}_{\omega \zeta'}  \pi_{\zeta^{-1} \sigma}^{(<k_{m-1})})(\hat{\pi}_{\zeta'^{-1} \zeta} \pi_{\zeta^{-1} \sigma}^{(>k_{m-1})}).
\end{equation}

We have $\pi_{\zeta^{-1} \sigma}^{(<k_{m-1})} = \pi_{\zeta'^{-1} \tilde{\sigma}}$ where $\tilde{\sigma} = [\sigma(1), \sigma(2),\allowbreak \dots, \sigma(k_{m-1}), \zeta'(k_{m-1} +1), \zeta'(k_{m-1}+2), \dots, \zeta'(n)]$. The product $(\hat{\pi}_{\zeta'^{-1} \zeta} \pi_{\zeta^{-1} \sigma}^{(>k_{m-1})})$ only acts on the block $\beta_m(\sigma)$, one can use Proposition \ref{prop:ESigma} by ignoring the left factor of size $k_{m-1}$ of $\sigma$. We have

\begin{equation}
K_{\omega} \hat{\pi}_{\omega \zeta} \pi_{\zeta^{-1} \sigma} = \sum_{u} (-1)^{\vert u \vert} K_\omega \hat{\pi}_{\omega \zeta'} \pi_{\zeta'^{-1} \tilde{\sigma}} \pi_{\tilde{\sigma}^{-1} \sigma u}
\end{equation}
summing over subwords $u$ of $\mathcal{W}_m$ that are valid chains in the Bruhat order starting at $\sigma$. Now, as $\pi_{\zeta'^{-1} \tilde{\sigma}}$ and $ \pi_{\tilde{\sigma}^{-1} \sigma u}$ contain only operators $\pi_i$ with respectively $i<k_{m-1}$ and $i>k_{m-1}$, the product is still reduced and
\begin{equation}
K_{\omega} \hat{\pi}_{\omega \zeta} \pi_{\zeta^{-1} \sigma} = \sum_{u} (-1)^{\vert u \vert} K_\omega \hat{\pi}_{\omega \zeta'} \pi_{\zeta'^{-1} \sigma u}.
\end{equation}
We now obtain \eqref{eq:gen-esigma} by induction. 

By Proposition \ref{prop:gen-complement}, we know that the coefficients of the different permutations are either $1$ or $-1$ depending on the length. This tells us that the sum is cancellation free and that each permutation is obtained in exactly one chain.  
\end{proof}

As an example, let us see the computation for $\sigma = 12 \vert 463 \vert 5$, $k_1 = 2$, $k_2 = 5$. We have $\zeta = 21 \vert 643 \vert 5$ and $\zeta' = 21 \vert 6543$. The result is illustrated by Fig. \ref{fig:ESigmaMultiK}.

\begin{align}
K_{\omega} \hat{\pi}_{\omega \zeta} \pi_{\zeta^{-1} \sigma}  &= K_{654321} \hat{\pi}_4  \hat{\pi}_3  \hat{\pi}_2  \hat{\pi}_1 \hat{\pi}_5 \hat{\pi}_4 \hat{\pi}_3 \hat{\pi}_2 \hat{\pi}_4  \hat{\pi}_5~ \pi_1 \pi_3 \\
&= (K_{654321} \hat{\pi}_4  \hat{\pi}_3  \hat{\pi}_2  \hat{\pi}_1 \hat{\pi}_5 \hat{\pi}_4 \hat{\pi}_3 \hat{\pi}_2 ~ \pi_1)  (\hat{\pi}_4  \hat{\pi}_5 ~ \pi_3) \\
&= \hat{K}_{216543} \pi_1 (\pi_4 \pi_5 \pi_3 - \pi_4\pi_3 - \pi_5\pi_3 + \pi_3) \\
&= \mathfrak{E}_{12 \vert 4635} - \mathfrak{E}_{12 \vert 4653} - \mathfrak{E}_{12 \vert 5634} + \mathfrak{E}_{12 \vert 5643}
\end{align}

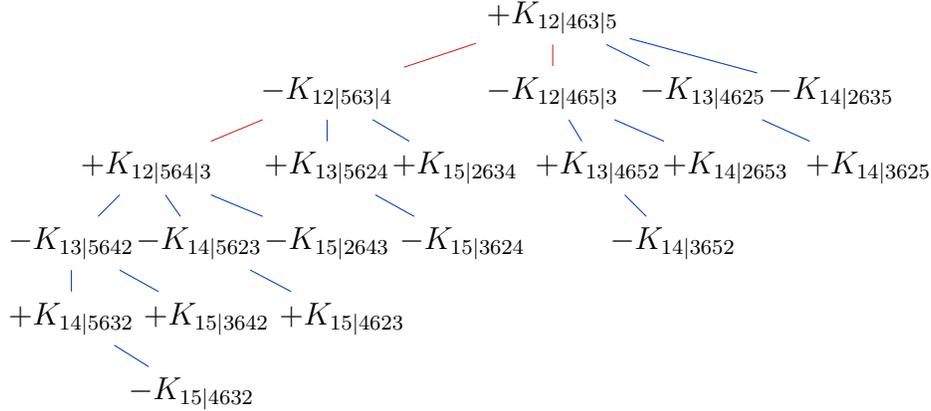
\begin{figure}[ht]
\centering
\begin{tikzpicture}
\node(K13524) at (6.4,0){$+K_{12 \vert 463 \vert 5}$};

\node(K14523) at (3.4,-1){$- K_{12 \vert 563 \vert 4}$};
\node(K13542) at (6.4,-1){$- K_{12 \vert 465 \vert 3}$};
\node(K23514) at (8.4,-1){$- K_{13 \vert 4625}$};
\node(K31524) at (10.1,-1){$- K_{14 \vert 2635}$};

\node(K14532) at (1,-2){$+ K_{12 \vert 564 \vert 3}$};
\node(K24513) at (3.4,-2){$+ K_{13 \vert 5624}$};
\node(K41523) at (5.1,-2){$+ K_{15 \vert 2634}$};
\node(K23541) at (7,-2){$+ K_{13 \vert 4652}$};
\node(K31542) at (8.7,-2){$+ K_{14 \vert 2653}$};
\node(K32514) at (10.6,-2){$+ K_{14 \vert 3625}$};

\node(K24531) at (0,-3){$- K_{13 \vert 5642}$};
\node(K34512) at (1.7,-3){$- K_{14 \vert 5623}$};
\node(K41532) at (3.4,-3){$- K_{15 \vert 2643}$};
\node(K42513) at (5.2,-3){$- K_{15 \vert 3624}$};
\node(K32541) at (8,-3){$- K_{14 \vert 3652}$};

\node(K34521) at (0,-4){$+ K_{14 \vert 5632}$};
\node(K42531) at (1.8,-4){$+ K_{15 \vert 3642}$};
\node(K43512) at (3.6,-4){$+ K_{15 \vert 4623}$};

\node(K43521) at (1.6,-5){$- K_{15 \vert 4632}$};

\draw[color=Rouge] (K13524) -- (K14523);
\draw[color=Rouge] (K13524) -- (K13542);
\draw[color=Rouge] (K14523) -- (K14532);

\draw[color=Bleu] (K14532) -- (K24531);
\draw[color=Bleu] (K14532) -- (K34512);
\draw[color=Bleu] (K14532) -- (K41532);
\draw[color=Bleu] (K24531) -- (K34521);
\draw[color=Bleu] (K24531) -- (K42531);
\draw[color=Bleu] (K34512) -- (K43512);
\draw[color=Bleu] (K34521) -- (K43521);

\draw[color=Bleu] (K14523) -- (K24513);
\draw[color=Bleu] (K14523) -- (K41523);
\draw[color=Bleu] (K24513) -- (K42513);

\draw[color=Bleu] (K13542) -- (K23541);
\draw[color=Bleu] (K13542) -- (K31542);
\draw[color=Bleu] (K23541) -- (K32541);

\draw[color=Bleu] (K13524) -- (K23514);
\draw[color=Bleu] (K13524) -- (K31524);
\draw[color=Bleu] (K23514) -- (K32514);

\end{tikzpicture}
\caption{
The set $\mathfrak{E}_{12 \vert 463 \vert 5}$.
}
\label{fig:ESigmaMultiK}
\end{figure}

\subsubsection*{Acknowledgements}
\label{sec:ack}
The computation and tests needed along the research were done using the open-source mathematical softwear \texttt{Sage} \cite{sage} and its combinatorics features developed by the \texttt{Sage-Combinat} community \cite{Sage-Combinat}.

\bibliographystyle{plain}
\label{sec:biblio}
\bibliography{article_postprint}

\begin{thebibliography}{10}

\bibitem{BergeronKbruhat}
N.~Bergeron and F.~Sottile.
\newblock {Schubert polynomials, the Bruhat order, and the geometry of flag
  manifolds}.
\newblock {\em Duke Mathematical Journal}, 95:373--423, 1998.

\bibitem{bourbaki}
N.~Bourbaki.
\newblock {\em Groupes et alg\`ebres de {L}ie}.
\newblock Hermann, Paris, 1968.
\newblock Fasc. 34.

\bibitem{Fulton}
W.~Fulton.
\newblock {\em Intersection theory}, volume~2 of {\em Ergebnisse der Mathematik
  und ihrer Grenzgebiete (3) [Results in Mathematics and Related Areas (3)]}.
\newblock Springer-Verlag, Berlin, 1984.

\bibitem{FultonLascoux}
W.~Fulton and A.~Lascoux.
\newblock A {P}ieri formula in the {G}rothendieck ring of a flag bundle.
\newblock {\em Duke Mathematical Journal}, 76(3):711--729, 1994.

\bibitem{LascouxGroth}
A.~Lascoux.
\newblock Anneau de {G}rothendieck de la vari\'et\'e de drapeaux.
\newblock In {\em The {G}rothendieck {F}estschrift, {V}ol.\ {III}}, volume~88
  of {\em Progr. Math.}, pages 1--34. Birkh\"auser Boston, Boston, MA, 1990.

\bibitem{LascouxKBruhat}
A.~Lascoux and M.-P. Sch{\"u}tzenberger.
\newblock Symmetry and flag manifolds.
\newblock In Francesco Gherardelli, editor, {\em Invariant Theory}, volume 996
  of {\em Lecture Notes in Mathematics}, pages 118--144. Springer Berlin /
  Heidelberg, 1983.
\newblock 10.1007/BFb0063238.

\bibitem{LascouxCoxeter}
A.~Lascoux and M.-P. Schützenberger.
\newblock Treillis et bases des groupes de coxeter.
\newblock {\em The Electronic Journal of Combinatorics}, 3(2), 1996.

\bibitem{Lenart}
C.~Lenart.
\newblock {A K-theory version of Monk's formula and some related multiplication
  formulas}.
\newblock {\em Journal of Pure and Applied Algebra}, 179(1):137--158, 2003.

\bibitem{LenartGreedyAlgo}
C.~Lenart.
\newblock From {M}acdonald polynomials to a charge statistic beyond type {$A$}.
\newblock {\em J. Combin. Theory Ser. A}, 119(3):683--712, 2012.

\bibitem{LenartPostnikov}
C.~Lenart and A.~Postnikov.
\newblock {Affine Weyl Groups in K-Theory and Representation Theory}.
\newblock {\em International Mathematics Research Notices}, 2007, 2007.

\bibitem{Macdonald}
I.~G. Macdonald.
\newblock {\em Symmetric functions and {H}all polynomials}.
\newblock The Clarendon Press Oxford University Press, New York, second
  edition, 1995.

\bibitem{Sage-Combinat}
The {S}age-{C}ombinat community.
\newblock {S}age-{C}ombinat: enhancing {S}age as a toolbox for computer
  exploration in algebraic combinatorics, 2011.
\newblock {\tt http://combinat.sagemath.org}.

\bibitem{sage}
W.\thinspace{}A. Stein et~al.
\newblock {\em {S}age {M}athematics {S}oftware ({V}ersion 4.7.1)}.
\newblock The Sage Development Team, 2011.
\newblock {\tt http://www.sagemath.org}.

\bibitem{VermatMobius}
D.-N. Verma.
\newblock {M\"obius inversion for the Bruhat ordering on a Weyl group}.
\newblock {\em Annales Scientifiques de l'\'Ecole Normale Sup\'erieure},
  4:393--398, 1971.

\end{thebibliography}
\end{document}